\documentclass[uplatex,a4paper,11pt]{amsart}
\usepackage{amsmath, amssymb, amsthm}

\title[On modular forms of rational weight satisfying the canonical MLDE]
{On modular forms of rational weight satisfying the canonical second-order linear\ 
modular differential equation}

\author{Yuichi Sakai}
\address{Kurume Institute of Technology 2228-66, Kamitsu, Kurume, Fukuoka 830-0052, JAPAN}
\email{dynamixaxs@gmail.com}
\author{Hiroyuki Tsutsumi}
\address{Osaka University of Health and Sport Science 1-1, Asashirodai, Kumatori-cho, Sennan-gun, Osaka 590-0496 JAPAN}
\email{tsutsumi@ouhs.ac.jp}

\subjclass[2020]{Primary 11F03, 11F11; Secondary 34M35, 33C05, 11F06.}

\keywords{Kaneko-Zagier equation, modular linear differential equation, hypergeometric function, monodromy representation, principal congruence subgroup.}

\newtheorem{theorem}{Theorem}[section]
\newtheorem*{theorem*}{Main Theorem}
\newtheorem{proposition}[theorem]{Proposition}
\newtheorem{corollary}[theorem]{Corollary}
\newtheorem{lemma}[theorem]{Lemma}

\theoremstyle{remark}
\newtheorem{remark}{Remark}

\begin{document}
\maketitle

\begin{abstract}
In this paper, we provide an explicit and purely algebraic proof for the classification
of the rational weights $k$ for which the Kaneko-Zagier (KZ) differential equation
admits a fundamental system of solutions consisting of modular forms for a principal
congruence subgroup $\Gamma(N)$. We note that this classification was recently
established in a more general setting by Saber and Sebbar, who proved it for arbitrary
finite-index subgroups of $SL_2(\mathbb{Z})$ through a global geometric and
representation-theoretic approach. By transforming the KZ equation into a hypergeometric
differential equation, we study the global analytic continuation of its solutions from
a local perspective, adopting an approach analogous to Stiller's work on Picard-Fuchs
equations.
We explicitly construct the monodromy representation matrices corresponding
to the elements of the principal congruence subgroups and completely 
determine the algebraic conditions under which these connection matrices commute.
Leveraging these stringent commutativity constraints, we prove that the weights $k$ 
yielding modular solutions are strictly limited to $k \equiv 1/2, 7/2, 1, 2, 3 \pmod{6}$ 
and $k = (6n+1)/5$, thereby demonstrating that no modular solutions exist beyond
those previously discovered by Kaneko and Koike. Furthermore, 
the commutative algebras generated by these connection matrices reveal a profound 
analogy with commuting transfer matrices in quantum integrable systems.
\end{abstract}

\tableofcontents

\section{Introduction}\label{SSINTRO}

The second-order linear differential equation introduced by Kaneko and Zagier in \cite{KZ},
\begin{equation*}
  f''(\tau)-\frac{k+1}{6}E_2(\tau)f'(\tau)+\frac{k(k+1)}{12}E_2'(\tau) f(\tau)=0,
\end{equation*}
is one of the earliest and most prominent examples of a modular linear differential equation.
It is characterized not only by the fact that its solutions are closely related to supersingular 
polynomials of elliptic curves, but also by its solution space being invariant under the weight $k$ 
action of $\mathrm{SL}_2(\mathbb{Z})$. Here, $f'=\frac{1}{2\pi i}\frac{d}{d\tau}f$,
and $E_2(\tau)=1-24q-72q^2+\dots \ (q=e^{2\pi i\tau})$ is 
the Eisenstein series of weight $2$. Hereafter, we refer to this equation 
as the Kaneko-Zagier (KZ) equation of weight $k$.

Subsequently, Kaneko and Koike \cite{KK, K} showed that when the weight $k$ is an integer,
a half-integer, or $(6n+1)/5$ for $n\in \mathbb{Z}$, the KZ equation admits modular or quasimodular 
forms for congruence subgroups as its solutions. Furthermore, Mathur, Mukhi, and Sen \cite{MMS}
pointed out that the solutions of modular linear differential equations appearing in two-dimensional 
conformal field theory can be obtained by shifting the solution space of the KZ equation by powers 
of the Dedekind eta function.

Recently, this question was completely resolved by Saber and Sebbar \cite{SS1, SS2}.
By studying the associated automorphic Schwarzian equations and utilizing the Riemann-Hurwitz
formula for the covering maps of modular curves,
they proved that this exact classification holds even if one assumes the solutions to be modular
forms for an arbitrary finite-index subgroup of $SL_2(\mathbb{Z})$.
Their geometric and representation-theoretic approach confirmed that the modular solutions are
restricted exactly to the cases found by Kaneko and Koike, while also explicitly supplying
the missing level 5 cases.

The primary motivation of the present paper is to provide an independent, constructive,
and purely algebraic proof of this classification specifically for the principal congruence
subgroups $\Gamma(N)$.
While the approach of Saber and Sebbar relies
on the global topological properties of Fuchsian groups, we demonstrate that
the same classification can be rigorously derived from the local connection formulas and
the explicit commutativity conditions of the monodromy matrices.

Specifically, we provide an alternative proof for the following theorem:

\begin{theorem*}
  If the KZ equation admits a fundamental system of solutions consisting of modular forms for $\Gamma(N)$,
  then the weight $k$ is restricted to those given by Kaneko and Koike.
\end{theorem*}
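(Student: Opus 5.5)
We convert the KZ equation into a hypergeometric equation and read the monodromy off local exponents and connection matrices. Put $J=1728/j(\tau)$, or equivalently work with $\lambda$ or $j$ as the Hauptmodul of $\mathrm{SL}_2(\mathbb{Z})$, and write $f=E_4^{k/4}\,F(J)$. This is the standard Kaneko--Zagier substitution: the KZ equation becomes Gauss's hypergeometric equation in $J$ with parameters
\[
a=-\tfrac{k}{12},\qquad b=-\tfrac{k-4}{12}\ (\text{up to the usual normalization}),\qquad c=\tfrac{5-k}{6}\ \text{or}\ \tfrac{2}{3}\text{-type}.
\]
The singular points $J=0,1,\infty$ correspond to $i\infty$, $i$ and $\rho$, and the local exponent differences are determined by $k$. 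The first step is to compute these exponent differences explicitly as affine functions of $k$, namely $(k+1)/6$ at the cusp, $1/2$ at $i$ and $1/3$ at $\rho$ (modulo the shift coming from the factor $E_4^{k/4}$).

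Next we use the standard generators $S$ and $T$ of $\mathrm{SL}_2(\mathbb{Z})$. Choose a local basis at the cusp, $f_1=q^{-k/12}(1+\dots)$ and $f_2=q^{(k+2)/12}(1+\dots)$; the exponents $-k/12$ and $(k+2)/12$ differ by $(k+1)/6$. In this basis the weight-$k$ action of $T$ is diagonal, $\rho(T)=\mathrm{diag}(e^{-2\pi i k/12},e^{2\pi i(k+2)/12})$. We obtain $\rho(S)$ by conjugating the local monodromy at $J=1$ with the Gauss connection matrix, whose entries are the usual $\Gamma$-function quotients. The hypothesis that the solutions are modular for $\Gamma(N)$ means that $\rho$ factors through $\mathrm{SL}_2(\mathbb{Z}/N\mathbb{Z})$ up to the scalar $\pm1$ twist from the weight. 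In particular:
\begin{itemize}
\item $\rho(T^N)=1$, which forces $k\in\mathbb{Q}$ and $12\mid Nk$, and excludes the logarithmic case $(k+1)/6\in\mathbb{Z}$ except when it is compatible;
\item every element of $\Gamma(N)$ acts trivially, in particular $ST^NS^{-1}$, which is lower triangular unipotent.
\end{itemize}
We therefore need $\rho(S)\rho(T)^N\rho(S)^{-1}=1$. This is automatic once $\rho(T)^N=1$, so the real constraint must come from elements such as $T^aST^b\cdots$ lying in $\Gamma(N)$. The tractable approach is the commutativity criterion: $\rho(\Gamma(N))$ is trivial, so the images of $T^N$ and $g T^N g^{-1}$ commute (both are trivial). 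More usefully, the image of $\rho$ is a finite group, so the hypergeometric monodromy group must be finite. We then invoke Schwarz's list: a hypergeometric equation has finite monodromy iff the triple of exponent differences $(\tfrac12,\tfrac13,\tfrac{k+1}{6})$, up to sign, integer shifts and even parity changes, belongs to the Schwarz table. Matching $(1/2,1/3,\nu)$ against the table gives only a few possibilities: dihedral cases with $\nu$ in $\tfrac12+\mathbb{Z}$; tetrahedral, octahedral and icosahedral cases with $\nu\in\{1/3,1/4,1/5,2/5\}$ modulo the allowed shifts; and the reducible or logarithm-free integral cases. Translating $\nu=(k+1)/6$ back into conditions on $k$ yields precisely $k\equiv 1/2,7/2,1,2,3 \pmod 6$ together with $k=(6n+1)/5$ from the icosahedral entries.

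In the spirit of the paper, we would replace the appeal to Schwarz by an explicit computation. Take the commutator of $\rho(T)^N$-type matrices with the conjugate $\rho(S)\rho(T)\rho(S)^{-1}$ raised to the appropriate power, and impose that the off-diagonal entries of the resulting connection products vanish. Through the connection coefficients, this reduces to trigonometric identities of the form $\sin(\pi\alpha)\sin(\pi\beta)=0$ together with cyclotomic conditions relating $e^{2\pi i k/12}$ to the angles $\pi/2$ and $\pi/3$.

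The main obstacle is the integer case $(k+1)/6\in\mathbb{Z}$, and more generally the degenerate cases where the two cusp exponents differ by an integer. There a logarithm can appear, and modularity must exclude it: we would show that the logarithmic term makes $\rho(T^N)$ unipotent and nontrivial unless a specific connection coefficient vanishes, and then identify exactly when it vanishes. A second difficulty is separating the icosahedral residues $\nu\equiv\pm1/5,\pm2/5$ to recover exactly $(6n+1)/5$ rather than a larger set. We would check these cases against the finite-group condition, requiring that the trace of $\rho(ST)$, or of a suitable word, equal an algebraic integer of bounded order.
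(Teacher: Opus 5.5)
Your Schwarz-list route differs from the paper's, but the step that feeds it does not follow from the hypothesis in the sense the statement intends. You assert that modularity for $\Gamma(N)$ makes $\rho$ factor through $\mathrm{SL}_2(\mathbb{Z}/N\mathbb{Z})$ up to sign, that $\rho(T^N)=1$, and hence that the monodromy group is finite. The statement, however, explicitly admits non-trivial multiplier systems, and the two basis solutions $f,g$ need not share one.

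All you actually get is that in the basis $(f,g)$ each $\gamma\in\Gamma(N)$ acts by $\mathrm{diag}(v_f(\gamma),v_g(\gamma))$. Here $v_f/v_g$ is a character of $\Gamma(N)$ that cannot a priori be assumed trivial, or even of finite order, since the image of $\Gamma(N)$ in $\mathrm{PSL}_2(\mathbb{Z})$ is free for $N\ge 2$. So $\rho(\Gamma(N))$ is only known to be a commutative group of simultaneously diagonal matrices. Its projective image may be infinite, and Schwarz's theorem cannot be invoked. This commutativity is exactly what the paper exploits. It never proves finiteness; instead it produces elements $(T^{r}ST^{s}S)^2$ and $(T^{u}ST^{v}S)^2$ of $\pm\Gamma(N)$ whose images $N(r,s)$, $N(u,v)$ do not commute, using the explicit connection matrices, Conway--Jones, and the lifting proposition. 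Your ``commutativity criterion'' became vacuous precisely because you had already assumed $\rho(\Gamma(N))$ trivial.

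The gap can be closed as follows. $H=\rho(\Gamma(N))$ is a normal, simultaneously diagonal, finite-index subgroup of $G=\rho(\mathrm{SL}_2(\mathbb{Z}))$.

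\emph{If $H$ is scalar}, the projective monodromy is finite. Then Schwarz applies in the irreducible case, and Maschke gives complete reducibility in the reducible case.

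\emph{If $H$ is not scalar}, it has exactly two common eigenlines, which $G$ must permute. So $G$ is either completely reducible or irreducible and imprimitive. In the imprimitive case, two of the local monodromies $\rho(T)$, $\rho(S)$, $\rho(TS)^{-1}$ (whose product is $1$) must swap the lines and hence have projective order $2$. The order-$3$ one cannot, so $\rho(T)$ does, which forces $(k+1)/6\in\frac12+\mathbb{Z}$, i.e.\ $k\equiv 2\pmod{6}$. This case is already on the list.

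You must also genuinely exclude the reducible cases $k\equiv 0,4\pmod{6}$, which you list among the possibilities and then silently drop. There exactly one of $a,b,c-a,c-b$ is an integer, so the monodromy has a unique invariant line and is never completely reducible. For $k=0$ this is the non-modularity of the Eichler integral of $\eta^4$. Your plan for the logarithmic case $k\equiv 5\pmod{6}$ is correct and survives the weaker hypothesis: $\rho(T^N)$ is not semisimple, whereas $T^N\in\Gamma(N)$ must act diagonally.

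With these additions the Schwarz route handles every denominator uniformly. It needs no Conway--Jones, no case-by-case check for $q\le 5$, and no appeal to the literature for integral $k$. It even sharpens $(6n+1)/5$ to $n\not\equiv 4\pmod{5}$. What the paper's route buys in exchange is explicit elements of $\Gamma(N)$ witnessing the failure of modularity.
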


\noindent The term ``modular forms'' is used here in a rather broad sense. First, we allow poles 
in the upper half-plane and at the cusps. Second, the weight $k$ can be any rational number. 
Third, we admit non-trivial multiplier systems. Finally, since the KZ equation is a second-order 
linear differential equation, we can take two linearly independent solutions; however, 
these two solutions are not required to share the same multiplier system.

To prove this fact, we adopt an approach analogous to that used by P. F. Stiller in his study 
of Picard-Fuchs equations \cite{S1, S2, S3}, which differs from the methods conventionally 
used in the study of modular differential equations.

Existing studies typically begin with cases of relatively small weight $k$, substituting modular 
forms directly into the differential equation to verify they are solutions, and then inductively 
constructing modular form solutions of weight $l > k$. However, the weight $l$ of the solutions 
constructed in this manner must satisfy certain congruence conditions with respect to $k$ modulo 
some positive integer $m$. 
As is readily apparent, while this approach effectively proves that a given function is a 
modular form, it breaks down if a modular form solution does not exist. Thus, it cannot be
applied to our problem setting, where the existence of modular forms is a priori unknown.
In contrast, Stiller's approach investigates the global analytic continuation (monodromy connection) 
of solutions at the singularities of the differential equation.

For each weight, the KZ equation is a Fuchsian differential equation that is holomorphic on the 
upper half-plane. Hence, by shifting the solutions by an appropriate power of a modular form 
of positive weight, it can be reduced to a Fuchsian differential equation of weight $0$.
This weight $0$ differential equation is an analogue of the Picard-Fuchs equation, and 
one can investigate the analytic continuation (modularity) of its solutions in detail,
much like Stiller's approach. In other words, our approach can be described as studying 
modular differential equations through the classical theory of complex linear differential 
equations.

From this perspective, we begin our discussion below by examining the Schwarzian derivative, 
which plays a central role in the theory of complex linear differential equations. Additionally, 
in the process of analyzing the global connections, we will construct explicit connection matrices 
and completely determine the conditions under which they commute. As we will discuss 
in the concluding remarks, this algebraic structure reveals a fascinating analogy 
with exactly solvable models in mathematical physics.

\section{The KZ Equation and its Associated Differential Equations}\label{SSKZHRSE}

A fundamental approach in the study of second-order linear differential equations is 
to consider the ratio of a fundamental system of solutions. This ratio describes 
the projective structure of the solution space, which is governed by the Schwarzian 
derivative. We first show that for the KZ equation, this derivative is strictly 
determined by the Eisenstein series $E_4(\tau)$.

\begin{theorem}\label{THSE}
  Let $\phi$ and $\psi$ be linearly independent solutions of the KZ equation
  of weight $k$. If we set $F = \frac{\phi}{\psi}$, then its Schwarzian 
  derivative $S[F]$ satisfies the following relation:
  \begin{equation*}
    S[F] := (\log F')'' - \frac{1}{2}\{(\log F')'\}^2 = -\frac{(k+1)^2}{72} E_4(\tau)
  \end{equation*}
  Here, $E_4(\tau) = 1 + 240q + 2160q^2 + \dots \ (q=e^{2\pi i\tau})$ is the Eisenstein series of 
  weight $4$.
\end{theorem}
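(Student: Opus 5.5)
Our approach is the classical one: compute the Schwarzian of a ratio of solutions of a general second-order equation $f''+Pf'+Qf=0$, where $'=\frac{1}{2\pi i}\frac{d}{d\tau}$. The standard Wronskian computation gives the following. If $\phi,\psi$ are independent solutions and $F=\phi/\psi$, then
\begin{equation*}
F'=\frac{W}{\psi^2},\qquad W=\psi\phi'-\phi\psi',\qquad W'=-PW .
\end{equation*}
Hence
\begin{equation*}
(\log F')'=-P-2\frac{\psi'}{\psi}.
\end{equation*}
Set $u=\psi'/\psi$. Since $\psi$ solves the equation, $u$ satisfies the Riccati equation $u'=-Q-Pu-u^2$. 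Substituting this into
\begin{equation*}
S[F]=(\log F')''-\tfrac12\{(\log F')'\}^2
\end{equation*}
gives
\begin{equation*}
S[F]=-P'+2Q+2Pu+2u^2-\tfrac12(P+2u)^2 .
\end{equation*}
All $u$-dependent terms cancel, leaving the invariant
\begin{equation*}
S[F]=2Q-P'-\tfrac12P^2 .
\end{equation*}
The derivative is the normalized $\frac{1}{2\pi i}\frac{d}{d\tau}$, but the chain rule is the same, so no extra constants appear. This also confirms that $S[F]$ does not depend on the choice of fundamental system.

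Next we specialize to the KZ equation:
\begin{equation*}
P=-\frac{k+1}{6}E_2,\qquad Q=\frac{k(k+1)}{12}E_2' .
\end{equation*}
Then
\begin{equation*}
S[F]=\frac{k(k+1)}{6}E_2'+\frac{k+1}{6}E_2'-\frac{(k+1)^2}{72}E_2^2=\frac{(k+1)^2}{6}E_2'-\frac{(k+1)^2}{72}E_2^2 .
\end{equation*}

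Finally, we invoke Ramanujan's identity $E_2'=\frac{E_2^2-E_4}{12}$. The first term becomes $\frac{(k+1)^2}{72}(E_2^2-E_4)$. The $E_2^2$ terms then cancel exactly, giving
\begin{equation*}
S[F]=-\frac{(k+1)^2}{72}E_4 .
\end{equation*}

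The only step needing care is the general invariant formula $S[F]=2Q-P'-\frac12P^2$. The main risk there is a bookkeeping error in signs, or in the normalization convention of the Schwarzian as defined in the statement, which is $(\log F')''-\frac12((\log F')')^2$ rather than $\frac{F'''}{F'}-\frac32(F''/F')^2$. These two expressions agree, but we will verify this directly from the definition. Beyond that, the proof is a short computation combined with Ramanujan's identity for $E_2'$.
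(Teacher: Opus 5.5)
Your proposal is correct and follows essentially the same route as the paper: write $(\log F')'$ in terms of $\psi'/\psi$, use the differential equation to eliminate second derivatives so that the $\psi'/\psi$ terms cancel, and finish with Ramanujan's identity $E_2' = (E_2^2-E_4)/12$. The difference is cosmetic. You work with general coefficients $P,Q$ and obtain the invariant $2Q - P' - \frac{1}{2}P^2$, whereas the paper parametrizes the coefficients through $A = E_2/12$, obtains $2(k+1)^2(A'-A^2)$, and leaves the use of Ramanujan's identity implicit.
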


\begin{proof}
  More generally, consider linearly independent solutions $\phi$ and $\psi$ 
  of the differential equation
  \begin{equation*}
    g''(\tau) - 2(k+1)A(\tau)g'(\tau) + \Big\{kA'(\tau) + B(\tau)\Big\}g = 0,
  \end{equation*}
  where $A(\tau)$ is a (formal) quasimodular form of weight 2 and depth 1
  on $\mathrm{SL}_{2}(\mathbb{Z})$, and $B(\tau)$ is a (formal) modular form
  of weight 4 on $\mathrm{SL}_{2}(\mathbb{Z})$.

  Setting $G = \frac{\phi}{\psi}$, a direct calculation yields 
  $\frac{G''}{G'} = 2(k+1)A(\tau) - 2\frac{\psi'}{\psi}$. Consequently, we have
  \begin{equation*}
    \begin{split}
      \left(\frac{G''}{G'}\right)' &= 2(k+1)^2A'(\tau) 
      - 4(k+1)A(\tau)\frac{\psi'}{\psi} + 2\left(\frac{\psi'}{\psi}\right)^2, \\
      -\frac{1}{2}\left(\frac{G''}{G'}\right)^2 &= -2(k+1)^2A(\tau)^2 
      + 4(k+1)A(\tau)\frac{\psi'}{\psi} - 2\left(\frac{\psi'}{\psi}\right)^2.
    \end{split}
  \end{equation*}
  Thus, we obtain
  \begin{equation*}
    S[G](\tau) = \left(\frac{G''}{G'}\right)' - \frac{1}{2}\left(\frac{G''}{G'}\right)^2 
    = 2(k+1)^2\big(A'(\tau) - A(\tau)^2\big).
  \end{equation*}
  In our specific case involving the KZ equation, we have 
  $A(\tau) = \frac{1}{12}E_2(\tau)$ and $B(\tau) = 0$. Substituting these 
  into the above yields
  \begin{equation*}
    2(k+1)^2\big(A'(\tau) - A(\tau)^2\big) = -\frac{(k+1)^2}{72}E_4(\tau).
  \end{equation*}
\end{proof}

As an immediate consequence of this Schwarzian relation, we can strictly 
determine the weight of any modular form solution to the KZ equation.

\begin{corollary}\label{CRAW}
  Assume $k \neq -1$. If $\phi$ is a modular form solution to the
  KZ equation of weight $\ell$, then we must have $k = \ell$.
\end{corollary}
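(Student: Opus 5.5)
A natural plan is to exploit the Schwarzian relation of Theorem~\ref{THSE} through the exponents at the cusp $i\infty$, or more directly by plugging the modular transformation of $\phi$ into the equation. I would take the direct route. Suppose $\phi$ is a modular form of weight $\ell$ (for some finite-index subgroup, possibly with a multiplier system) and that it solves the KZ equation of weight $k$. The KZ solution space is invariant under the weight-$k$ slash action of $\mathrm{SL}_2(\mathbb{Z})$, hence under every $\gamma$ in the subgroup. So $\phi|_k\gamma$ is again a solution. On the other hand, $\phi|_\ell\gamma=v(\gamma)\phi$, so
\[
\phi|_k\gamma=v(\gamma)(c\tau+d)^{\ell-k}\phi .
\]

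Next I would substitute $g=(c\tau+d)^{m}\phi$, with $m=\ell-k$, into the KZ equation. Write $f'=\frac{1}{2\pi i}\frac{d}{d\tau}f$ and $h=(c\tau+d)$. Then $(h^m)'=\frac{mc}{2\pi i}h^{m-1}$. Both $g$ and $\phi$ are solutions, so subtracting $h^m$ times the equation for $\phi$ leaves
\[
2\frac{mc}{2\pi i}h^{m-1}\phi'+\frac{m(m-1)c^2}{(2\pi i)^2}h^{m-2}\phi-\frac{k+1}{6}E_2\frac{mc}{2\pi i}h^{m-1}\phi=0 .
\]
Choose $\gamma$ in the subgroup with $c\neq0$; this is possible since the subgroup has finite index. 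If $m\neq0$, divide by $mch^{m-1}$. The result is the first-order relation
\[
\frac{\phi'}{\phi}=\frac{k+1}{12}E_2-\frac{(m-1)c}{4\pi i(c\tau+d)} .
\]
The left side is independent of $\gamma$, while the last term depends on $c,d$. Comparing two elements with different ratios $d/c$ forces $m=1$ (for instance, use $\gamma$ and $\gamma T^N$, which change $d$ to $d+Nc$). Then $\phi'/\phi=\frac{k+1}{12}E_2$, i.e.\ $\phi=C\eta^{2(k+1)}$.

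The step I expect to be the main obstacle is ruling out this residual case $m=1$. There $\phi$ is a power of $\eta$, and $\eta^{2(k+1)}$ has weight $k+1$, consistent with $m=1$. So I would plug $\phi=\eta^{2(k+1)}$ back into the second-order equation directly. Using $\eta'/\eta=E_2/24$ and Ramanujan's $E_2'=(E_2^2-E_4)/12$, the equation reduces to a nonzero multiple of $(k+1)(\cdots)E_4$. It vanishes only for $k=-1$, which is excluded. This is where the hypothesis $k\neq-1$ is used.

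Alternatively, one can argue via the Schwarzian of Theorem~\ref{THSE}: a ratio of a weight-$\ell$ form by a weight-$k$ one would not transform projectively. I prefer the computation above because it is self-contained. This gives $m=0$, i.e.\ $k=\ell$.
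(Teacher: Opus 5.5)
Your argument is correct, but it follows a different route from the paper. The paper takes a second solution $\psi$ independent of $\phi$ and writes the transformed $\phi$ as $\alpha(M)\phi+\beta(M)\psi$, so that $\beta(M)\psi/\phi$ is an affine function of a power of $c\tau+d$. It then applies Theorem~\ref{THSE}: the Schwarzian of $\psi/\phi$ equals $-\frac{(k+1)^2}{72}E_4$, whereas the Schwarzian of $(c\tau+d)^m$ is a constant multiple of $c^2/(c\tau+d)^2$. For $k\neq-1$ these cannot coincide, which disposes of every $m\neq0$ at once using a single element with $c\neq0$ (the paper also splits off $k=0$ via the explicit basis $1$, $\int\eta^4$). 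You avoid both the second solution and the Schwarzian. Your substitution gives a first-order relation, and translation by $T^N$ forces $m=1$. The one surviving candidate $\phi=C\eta^{2(k+1)}$ is then removed by direct substitution, which gives $-\frac{(k+1)^2}{144}E_4\,\eta^{2(k+1)}\neq0$ (equivalently, $u=1$ does not solve the Laguerre--Forsyth form of Theorem~\ref{THKZROD}). Your version is self-contained, covers $k=0$ without a separate case, and shows exactly where $k\neq-1$ is needed, namely to exclude the weight-$(k+1)$ candidate $\eta^{2(k+1)}$. The paper's version is shorter once Theorem~\ref{THSE} is available. Note also that you correctly use the weight-$k$ slash action, so the exponent is $m=\ell-k$ and your division by $m$ is legitimate exactly when $\ell\neq k$. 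The paper's display instead places $\phi(M\tau)$ itself in the span of $\phi,\psi$ and uses $(c\tau+d)^{\ell}$. It should be read with the factor $(c\tau+d)^{-k}$ and exponent $\ell-k$, together with $\beta(M)\neq0$, which holds precisely when $\ell\neq k$ and $c\neq0$.
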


\begin{proof}
  First, suppose $k = 0$. The KZ equation then reduces to
  \begin{equation*}
    f''(\tau) - \frac{1}{6}E_2(\tau)f'(\tau) = 0,
  \end{equation*}
  and it is easy to verify that $1$ and $\int_{\tau}^{i\infty} \eta(s)^4 \, ds$
  form a basis of solutions. Clearly, it admits no modular solutions of a 
  weight other than 0.

  Now assume $k \neq 0, -1$. Let $\psi$ be a solution to the KZ equation 
  that is linearly independent of $\phi$. Since the KZ equation is a modular
  differential equation and $\phi$ is a modular form of weight $\ell$, for any
  $M = \begin{pmatrix} a & b \\ c & d \end{pmatrix} \in \mathrm{SL}_2(\mathbb{Z})$,
  there exist constants $\alpha(M), \beta(M), \gamma(M) \in \mathbb{C}$
  such that
  \begin{equation*}
    \phi(M\tau) = \alpha(M)\phi(\tau) + \beta(M)\psi(\tau) 
    = \gamma(M)(c\tau+d)^{\ell}\phi(\tau).
  \end{equation*}
  Rearranging this equation gives
  \begin{equation*}
    \beta(M)\frac{\psi(\tau)}{\phi(\tau)} = \gamma(M)(c\tau+d)^{\ell} - \alpha(M).
  \end{equation*}
  Taking the Schwarzian derivative of both sides with respect to $\tau$, 
  we obtain:
  \begin{equation*}
    S\left[\beta(M)\frac{\psi(\tau)}{\phi(\tau)}\right] 
    = S\big[\gamma(M)(c\tau+d)^{\ell} - \alpha(M)\big].
  \end{equation*}
  By Theorem \ref{THSE}, $\frac{\psi}{\phi}$ is a solution to the Schwarz
  equation, which implies:
  \begin{equation*}
    -\frac{(k+1)^2}{72}E_4(\tau) = S\big[(c\tau+d)^{\ell}\big].
  \end{equation*}
  This leads to a contradiction because the right-hand side evaluates to
  $\frac{1-\ell^2}{2}\frac{c^2}{(c\tau+d)^2}$, which is clearly not a 
  modular form of weight 4.
\end{proof}

Before proceeding, we make two remarks regarding exceptional cases and 
modularity.

\begin{remark}
  When $k = -1$, the KZ equation simply reduces to $f''(\tau) = 0$, which 
  has $1$ and $\tau$ as its linearly independent solutions. Note that while 
  1 is a modular form of weight 0, $\tau$ is not a modular form of weight $-1$.
\end{remark}

\begin{remark}
  For a detailed discussion on the modularity of the Eichler integral 
  solution $\int_{\tau}^{i\infty} \eta(s)^4 \, ds$, we refer the reader to 
  \cite{CLR}.
\end{remark}

Returning to the structural properties of the KZ equation, we can establish
direct equivalences between it and several other classical differential 
equations.

\begin{theorem}\label{THKZROD}
  The following three differential equations are equivalent:
  \begin{enumerate}
    \item 
      $f''(\tau) - \frac{k+1}{6} E_2(\tau) f'(\tau) + \frac{k(k+1)}{12} E_2'(\tau) f(\tau) = 0$ 
      (The KZ equation)\\
    \item 
      $u''(\tau) - \frac{(k+1)^2}{144} E_4(\tau)u(\tau) = 0$ 
      (The Laguerre-Forsyth normal form)\\
    \item 
      $h'(\tau) - \frac{1}{2} h(\tau)^2 = -\frac{(k+1)^2}{72} E_4(\tau)$ 
      (The Riccati equation)
  \end{enumerate}
\end{theorem}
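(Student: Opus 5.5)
We plan to prove the equivalences by explicit changes of the unknown function, using Ramanujan's identities $E_2' = \frac{E_2^2 - E_4}{12}$, $E_4' = \frac{E_2E_4 - E_6}{3}$, with $'=\frac{1}{2\pi i}\frac{d}{d\tau}$. The first thing to fix is what ``equivalent'' means. Solutions $f$ of (1) and $u$ of (2) will correspond bijectively via $u = \eta(\tau)^{-2(k+1)} f$. Solutions of (2) and (3) will be related by the logarithmic-derivative substitution $h = -2u'/u$ (equivalently, $u = \exp(-\tfrac12\int h)$). These are the standard reductions of a second-order linear equation to its normal form and to its Riccati equation.

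For (1)$\Leftrightarrow$(2) we write $f = w u$ and choose $w$ to kill the first-order term. The coefficient of $u'$ is $2w' - \frac{k+1}{6}E_2 w$, so we need $w'/w = \frac{k+1}{12}E_2$. Since $\eta'/\eta = E_2/24$, this gives $w = \eta^{2(k+1)}$. Dividing by $w$, the coefficient of $u$ becomes
\[
\frac{w''}{w} - \frac{k+1}{6}E_2\frac{w'}{w} + \frac{k(k+1)}{12}E_2' .
\]
Next we use $\frac{w''}{w} = \bigl(\frac{w'}{w}\bigr)' + \bigl(\frac{w'}{w}\bigr)^2 = \frac{k+1}{12}E_2' + \frac{(k+1)^2}{144}E_2^2$. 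Collecting terms, the $E_2'$ contributions combine to $\frac{(k+1)^2}{12}E_2'$. The $E_2^2$ contributions give $\frac{(k+1)^2}{144}E_2^2 - \frac{(k+1)^2}{72}E_2^2 = -\frac{(k+1)^2}{144}E_2^2$. Substituting Ramanujan's identity for $E_2'$ makes the $E_2^2$ terms cancel, leaving exactly $-\frac{(k+1)^2}{144}E_4$. This proves that $f$ solves (1) if and only if $u$ solves (2), since $w$ is nowhere vanishing on the upper half-plane. This cancellation is the one place where a genuine identity among Eisenstein series enters, so it is the step we would check most carefully; everything else is formal. It is also consistent with the previous result, because the same combination $A'-A^2$ with $A=E_2/12$ appeared in the proof of Theorem~\ref{THSE}.

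For (2)$\Leftrightarrow$(3), given a nonvanishing solution $u$ of (2), set $h = -2u'/u$. Then $h' = -2u''/u + 2(u'/u)^2 = -2u''/u + \tfrac12 h^2$. Hence $h' - \tfrac12 h^2 = -2u''/u = -\frac{(k+1)^2}{72}E_4$. Conversely, given $h$ solving (3), define $u = \exp(-\tfrac12\int h)$ locally; the same computation read backwards shows $u'' = \frac{(k+1)^2}{144}E_4 u$. We would remark that $h$ may have poles at the zeros of $u$, so the correspondence is between meromorphic Riccati solutions and solution lines $\mathbb{C}^\times u$. Finally, we would note the link with Theorem~\ref{THSE}: for the ratio $F$ of two solutions of (2), $h=(\log F')'$ satisfies (3) because $F' = c/u_2^2$. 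This recovers the Schwarzian relation $S[F]=h'-\tfrac12h^2$ and ties the three formulations together.
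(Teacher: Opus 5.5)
Your proposal is correct and follows the paper's own route. The paper's proof consists precisely of the substitutions $u=f/\eta(\tau)^{2(k+1)}$ and $h=-2u'/u$; your computation, which uses $\eta'/\eta=E_2/24$ and Ramanujan's identity $E_2'=(E_2^2-E_4)/12$ to cancel the $E_2^2$ terms, supplies the verification that the paper leaves implicit (the only cosmetic point is that $u=\exp(-\tfrac12\int h)$ should be read with $\int$ inverting the normalized derivative $\frac{1}{2\pi i}\frac{d}{d\tau}$).
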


\begin{proof}
  This equivalence is established by transforming the KZ equation into its
  Laguerre-Forsyth normal form. Explicitly, we apply the substitutions:
  \begin{align*}
    u(\tau) &= f(\tau)\exp\left(-\frac{k+1}{6}\pi i\int E_2(\tau) d\tau\right) \\
            &= f(\tau)\exp\left(-4(k+1)\pi i \int \left(\log\eta(\tau)\right)' d\tau\right) 
             = \frac{f(\tau)}{\eta(\tau)^{2(k+1)}}, \\
    h(\tau) &= -2\frac{u'(\tau)}{u(\tau)},
  \end{align*}
  where $\eta(\tau)$ denotes the Dedekind eta function.
\end{proof}

Beyond these forms, an appropriate change of variables and scaling by a 
power of $E_4(\tau)$ allows us to rewrite the KZ equation as a hypergeometric 
differential equation. This fundamental result is essentially due to Kaneko 
and Zagier \cite{KZ}, with the explicit formula provided by Tsutsumi \cite{HT}.

\begin{theorem}\label{THKZHG}
  Applying the change of variables $x = 1728/j(\tau)$ and rescaling the 
  solution by a factor of $E_4(\tau)^{k/4}$ transforms the KZ equation into
  the following Gauss hypergeometric differential equation:
  \begin{equation*}
    x(1-x)\frac{d^2 y}{dx^2} + \frac{(k-8)x - (k-5)}{6}\frac{dy}{dx} - \frac{k(k-4)}{144}y = 0
  \end{equation*}
\end{theorem}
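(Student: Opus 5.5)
Write $f = E_4^{k/4}\,y(x)$ with $x = 1728/j$, and check directly that $y$ satisfies the stated hypergeometric equation. The natural tool is Ramanujan's system for the operator $D = \frac{1}{2\pi i}\frac{d}{d\tau}$:
\begin{equation*}
  E_2' = \frac{E_2^2 - E_4}{12}, \qquad
  E_4' = \frac{E_2E_4 - E_6}{3}, \qquad
  E_6' = \frac{E_2E_6 - E_4^2}{2}.
\end{equation*}
From these, together with $j = E_4^3/\Delta$ and $\Delta' = E_2\Delta$, we get $j' = -E_6 j/E_4$. Hence
\begin{equation*}
  x' = \frac{E_6}{E_4}\,x, \qquad 1 - x = \frac{E_6^2}{E_4^3}.
\end{equation*}
The second identity uses $1728\Delta = E_4^3 - E_6^2$. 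So $\frac{d}{d\tau}$ becomes $\frac{E_6}{E_4}\,x\frac{d}{dx}$, up to the constant $2\pi i$ absorbed in $D$.

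The computation proceeds in three steps.

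\textbf{First derivative.} We have
\begin{equation*}
  f' = E_4^{k/4}\left(\frac{k}{4}\frac{E_4'}{E_4}\,y + \frac{E_6}{E_4}\,x\,y_x\right),
\end{equation*}
where $E_4'/E_4 = (E_2 - E_6/E_4)/3$.

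\textbf{Second derivative.} Differentiate once more, using Ramanujan's identities to express $(E_6/E_4)'$ and $(E_4'/E_4)'$ in terms of $E_2, E_4, E_6$.

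\textbf{Substitution.} Insert $f$, $f'$, $f''$ into the KZ equation, with $E_2' = (E_2^2 - E_4)/12$.

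The essential point is that every $E_2$ must cancel. The coefficient of $E_2$ in $f''$ must match $\frac{k+1}{6}E_2 f'$, and the $E_2^2$ terms must cancel against $\frac{k(k+1)}{144}E_2^2 f$. This is exactly where the exponent $k/4$ is forced: it is the exponent making the rescaled equation free of the quasimodular form $E_2$.

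After the cancellation, what remains is a combination of $E_4$, $E_6^2/E_4^2$ and $E_6/E_4$ multiplying $y$, $x\,y_x$ and $x^2 y_{xx}$. Divide through by $E_6^2/E_4^2 = E_4(1-x)$, or equivalently by $E_4$, and use $E_6^2/E_4^3 = 1 - x$ to rewrite everything as rational functions of $x$. Multiplying by a suitable factor $1/x$ then puts the equation in the form
\begin{equation*}
  x(1-x)\,y_{xx} + (c - (a+b+1)x)\,y_x - ab\,y = 0 .
\end{equation*}
Reading off the coefficients should give
\begin{equation*}
  c = \frac{5-k}{6}, \qquad a + b + 1 = \frac{8-k}{6}, \qquad ab = \frac{k(k-4)}{144}.
\end{equation*}
These agree with the statement, corresponding to $a = -k/12$ and $b = (4-k)/12$, the familiar Kaneko--Zagier parameters.

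I expect the main obstacle to be the bookkeeping in the second derivative. The $E_2$-terms must be tracked carefully to confirm their cancellation. The $E_4$-terms coming from $E_2'$ and from $(E_6/E_4)'$ must combine into exactly $-\frac{k(k-4)}{144}$ times the correct power of $(1-x)$.

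As a consistency check, I would verify the local exponents at the end. At $x = 0$ (the cusp, $j = \infty$) they are $0$ and $1 - c = (k+1)/6$, matching the KZ $q$-exponents $0$ and $(k+1)/6$. At $x = 1$ ($\tau = i$) they are $0$ and $c - a - b = 1/2$. These exponents confirm the parameters independently of the long computation.
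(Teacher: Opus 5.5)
Your proposal is correct. Note that the paper does not actually prove this theorem: it only attributes it to Kaneko--Zagier, with the explicit form taken from Tsutsumi. Your direct verification via Ramanujan's identities is therefore a self-contained substitute for a citation, and the bookkeeping works out as you predict.

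Write $\phi=E_6/E_4$ and $\vartheta=x\,d/dx$. Then $D\phi=\tfrac16E_2\phi+\tfrac13\phi^2-\tfrac12E_4$. The $E_2^2$ and $E_2\phi$ terms cancel, and the $E_2\phi\,\vartheta y$ cancellation happens only for the exponent $k/4$, as you claim. The KZ equation divided by $E_4^{k/4}$ then becomes
\[
\phi^2\vartheta^2y+\Bigl(\tfrac{2-k}{6}\phi^2-\tfrac12E_4\Bigr)\vartheta y+\tfrac{k(k-4)}{144}\bigl(\phi^2-E_4\bigr)y=0 .
\]
Substituting $\phi^2=E_4(1-x)$, expanding $\vartheta$, and dividing by $E_4x$ gives exactly the stated equation.

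Two small inaccuracies in your description:
\begin{itemize}
\item No term linear in $E_6/E_4$ survives, since every coefficient is homogeneous of weight $4$.
\item The $y$-coefficient collapses to $-\tfrac{k(k-4)}{144}E_4\,x$. This is a factor of $x$, not a power of $(1-x)$, and it is what makes the final division by $x$ work.
\end{itemize}

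Your closing exponent check is weaker than you say. The exponents at $x=0$ and $x=1$ determine only $c$ and $a+b$, not $ab$. To make the check genuinely independent, add the exponents at $x=\infty$, i.e.\ at $\tau=\rho$. There KZ solutions are holomorphic with exponents $0,1$ in $\tau-\rho$. The factor $E_4^{-k/4}$ shifts these by $-k/4$, and since $1/x$ vanishes to order $3$ at $\rho$, you divide by $3$. This gives exponents $-k/12$ and $(4-k)/12$, i.e.\ $a$ and $b$, consistent with Fuchs' relation.

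With all three exponent pairs, Riemann's theorem gives a computation-free alternative proof. That theorem says a second-order Fuchsian equation with exactly three regular singular points is determined by its exponents. For this route you must first check that the pulled-back equation is Fuchsian with singularities only at $0,1,\infty$. Your direct computation avoids that step and additionally shows why the rescaling exponent must be $k/4$.
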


In the subsequent sections, we will utilize this hypergeometric 
transformation (Theorem \ref{THKZHG}) extensively to conduct a detailed 
investigation of the monodromy and connection behavior of the solutions to 
the KZ equation.

\section{The Stiller Basis and its Connection Matrices}\label{SSSBTC}

In this section, we review essential facts regarding the global connection 
formulas for the hypergeometric differential equation, which are 
prerequisites for our subsequent analysis. For detailed proofs of these 
classical results, we refer the reader to existing literature, e.g., 
\cite{YH}.

\begin{theorem}\label{CHGDE}
  Let $a, b, c, x \in \mathbb{C}$, and assume $c \notin \mathbb{Z}$ and
  $c-a-b \notin \mathbb{Z}$. For the hypergeometric differential equation
  \begin{equation*}
    x(1-x)y''+(c-(a+b+1)x)y'-aby=0,
  \end{equation*}
  the fundamental system of solutions around $x=0$, given by
  \begin{align*}
    y_1(x)&=F(a, b, c; x)=\sum_{k=0}^{\infty}\frac{(a, k)(b, k)}{(c, k)k!}x^k,\\
    y_2(x)&=x^{1-c}F(a-c+1, b-c+1, 2-c; x),
  \end{align*}
  and the fundamental system of solutions around $x=1$, given by
  \begin{align*}
    y_3(x)&=F(a, b, a+b-c+1; 1-x),\\
    y_4(x)&=(1-x)^{c-a-b}F(c-a, c-b, c-a-b+1; 1-x),
  \end{align*}
  satisfy the following connection formulas:
  \begin{align*}
    y_1(x)&=\frac{\Gamma(c)\Gamma(c-a-b)}{\Gamma(c-a)\Gamma(c-b)}y_3(x)
      +\frac{\Gamma(c)\Gamma(a+b-c)}{\Gamma(a)\Gamma(b)}y_4(x),\\
    y_2(x)&=\frac{\Gamma(2-c)\Gamma(c-a-b)}{\Gamma(1-a)\Gamma(1-b)}y_3(x)
      +\frac{\Gamma(2-c)\Gamma(a+b-c)}{\Gamma(a-c+1)\Gamma(b-c+1)}y_4(x).
  \end{align*}
\end{theorem}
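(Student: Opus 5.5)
The plan is the classical route: Gauss's summation formula at $x=1$, combined with Euler's transformation or with Barnes integrals. It suffices to prove the formula for $y_1$. The formula for $y_2$ then follows by applying it to $F(a-c+1,b-c+1,2-c;x)$, i.e. with $(a,b,c)$ replaced by $(a-c+1,b-c+1,2-c)$, and multiplying by $x^{1-c}$. Under this replacement the exponent $c-a-b$ is unchanged, and the local solutions at $x=1$ built from the new parameters coincide with $y_3$ and $y_4$. To see this, use Euler's transformation $F(\alpha,\beta,\gamma;z)=(1-z)^{\gamma-\alpha-\beta}F(\gamma-\alpha,\gamma-\beta,\gamma;z)$, or simply note that both are the unique solutions near $x=1$ with the given leading exponents $0$ and $c-a-b$, normalized to leading coefficient $1$.

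For $y_1$, first observe that $y_3,y_4$ form a fundamental system near $x=1$, because $c-a-b\notin\mathbb{Z}$ makes the local exponents $0$ and $c-a-b$ distinct modulo $\mathbb{Z}$. Since $y_1$ continues analytically along the real interval $(0,1)$, we can write $y_1=Ay_3+By_4$ with constants $A,B$. The task is to identify $A$ and $B$.

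To find $A$, assume temporarily that $\operatorname{Re}(c-a-b)>0$ and let $x\to 1^-$. Then $y_4\to 0$ and $y_3\to 1$, so $A=F(a,b,c;1)$. By Gauss's theorem, proved from Euler's integral $F(a,b,c;x)=\frac{\Gamma(c)}{\Gamma(b)\Gamma(c-b)}\int_0^1 t^{b-1}(1-t)^{c-b-1}(1-xt)^{-a}\,dt$ for $\operatorname{Re}c>\operatorname{Re}b>0$ and evaluated at $x=1$ as a Beta integral, this equals $\frac{\Gamma(c)\Gamma(c-a-b)}{\Gamma(c-a)\Gamma(c-b)}$.

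To find $B$, apply Euler's transformation: $y_1=(1-x)^{c-a-b}F(c-a,c-b,c;x)$. Expanding this by the same formula, with the roles exchanged, extracts the coefficient of $(1-x)^{c-a-b}$ as $F(c-a,c-b,c;1)=\frac{\Gamma(c)\Gamma(a+b-c)}{\Gamma(a)\Gamma(b)}$, valid when $\operatorname{Re}(a+b-c)>0$.

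The main obstacle is that these two evaluations require opposite half-plane conditions, so they cannot both be taken as a limit in a single parameter range. I would resolve this by analytic continuation in the parameters. The coefficients $A(a,b,c)$ and $B(a,b,c)$ are determined by a Wronskian quotient, $A=W(y_1,y_4)/W(y_3,y_4)$ and similarly for $B$, evaluated at a fixed point such as $x=1/2$. This makes them meromorphic in $(a,b,c)$ on the allowed domain. Each identity for $A$ and for $B$ holds on an open set where the required real-part inequalities are satisfied, and hence it holds everywhere by the identity theorem.

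Alternatively, one can avoid the continuation argument by using the Barnes integral representation of $y_1$ and shifting the contour to the right. The residues at the two series of poles produce $y_3$ and $y_4$ with exactly these Gamma-factor coefficients. I regard this as a backup route.
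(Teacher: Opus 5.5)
The paper gives no proof of Theorem~\ref{CHGDE}. It treats the result as a classical prerequisite and refers to \cite{YH}. Your argument is a correct, self-contained proof along standard textbook lines, and each step holds:
under $(a,b,c)\mapsto(a-c+1,b-c+1,2-c)$ the exponent $c-a-b$ is invariant, $c'-a'=1-a$ and $c'-b'=1-b$, and $x^{1-c}$ times the new local solutions at $x=1$ are exactly $y_3,y_4$ by Euler's transformation;
$A=F(a,b,c;1)$ by Gauss's summation when $\mathrm{Re}(c-a-b)>0$;
dividing $y_1=(1-x)^{c-a-b}F(c-a,c-b,c;x)$ by $(1-x)^{c-a-b}$ and letting $x\to1^-$ gives $B=F(c-a,c-b,c;1)$ when $\mathrm{Re}(a+b-c)>0$.
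Compared with a bare citation, your route shows where each hypothesis enters. It also shows that the continuation step matters for this paper. At the parameters used later, $a+b-c=-1/2$, so $A$ is obtained directly as an Abel limit. By contrast, $B$, whose nonvanishing is what Proposition~\ref{HGDF33} rests on, lies outside the half-plane where you evaluate it and is reached only by continuation.

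A few points would tighten the argument.
\begin{itemize}
\item The identification $\lim_{x\to1^-}y_1(x)=F(a,b,c;1)$ needs Abel's theorem on the absolutely convergent series.
\item By Liouville's formula, $W(y_3,y_4)=-(c-a-b)x^{-c}(1-x)^{c-a-b-1}\neq0$. So $A$ and $B$ are holomorphic, not merely meromorphic, on $D=\{c\notin\mathbb{Z},\ c-a-b\notin\mathbb{Z}\}$.
\item $D$ is connected, being the complement of a locally finite union of complex hyperplanes in $\mathbb{C}^3$. This is what the identity theorem requires.
\item Your Euler-transformation step actually proves $B(a,b,c)=A(c-a,c-b,c)$ identically on $D$. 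So only $A$ has to be continued, and the ``opposite half-planes'' obstacle disappears.
\item For the Barnes backup, the usual integral in $(-x)^s$ connects $0$ with $\infty$, not $0$ with $1$. To produce $y_3,y_4$ you need the Mellin--Barnes integral in powers of $1-x$. Its right-hand poles come from $\Gamma(-s)$ and $\Gamma(c-a-b-s)$.
\end{itemize}
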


\begin{theorem}\label{MONODOROMY}
  Let $G = \pi_1(\mathbb{P}^1\backslash\{0, 1, \infty\}, 1/2)$ be the 
  fundamental group with base point $x=1/2$. Let $\gamma_0 \in G$ be a
  loop around $x=0$ based at $x=1/2$, and $\gamma_1 \in G$ be a loop
  around $x=1$ based at $x=1/2$. If $(\gamma_i)_*(y_j(x), y_k(x))$ denotes
  the analytic continuation of the fundamental system along these paths,
  then their monodromy representations are given by
  \begin{align*}
    (\gamma_0)_*(y_1(x), y_2(x))&=(y_1(x), y_2(x))
    \begin{pmatrix}
      1 & \\
      & e^{2\pi i(1-c)}
    \end{pmatrix},\\
    (\gamma_1)_*(y_3(x), y_4(x))&=(y_3(x), y_4(x))
    \begin{pmatrix}
      1 & \\
      & e^{2\pi i(c-a-b)}
    \end{pmatrix}.
  \end{align*}
\end{theorem}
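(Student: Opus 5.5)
The plan is to read off the local monodromy directly from the local exponents of the hypergeometric equation. The key points are that each of the four series $F(\cdot,\cdot,\cdot;t)$ appearing in Theorem \ref{CHGDE} is single-valued and holomorphic near $t=0$, and that the only multivaluedness comes from the explicit power prefactors. First I will fix the branch conventions. Take $\gamma_0$ to be a loop based at $x=1/2$ that encircles $x=0$ once counterclockwise and does not wind around $x=1$. Since $|x|<1$ along such a loop, we may take it to be the circle $|x|=1/2$, and there the series $F(a,b,c;x)$ and $F(a-c+1,b-c+1,2-c;x)$ converge: their radius of convergence is $1$, because the parameters make no Pochhammer symbol in a denominator vanish, as $c\notin\mathbb{Z}$. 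Similarly, take $\gamma_1$ to be the circle $|1-x|=1/2$ around $x=1$. On it, $F(a,b,a+b-c+1;1-x)$ and $F(c-a,c-b,c-a-b+1;1-x)$ converge and are single-valued, which uses $c-a-b\notin\mathbb{Z}$.

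Next I will continue each solution along these circles. Along $\gamma_0$, the function $y_1$ is a convergent power series, so it returns to itself and contributes the entry $1$. For $y_2$, write $x=\tfrac12 e^{i\theta}$ with $\theta$ running from $0$ to $2\pi$. The factor $x^{1-c}=\exp((1-c)\log x)$, taken with the principal branch at $x=1/2$, is multiplied by $e^{2\pi i(1-c)}$ because $\log x$ increases by $2\pi i$. Meanwhile the attached hypergeometric series returns unchanged. Hence $(\gamma_0)_*y_2=e^{2\pi i(1-c)}y_2$. The same computation along $\gamma_1$ works with $1-x=\tfrac12e^{i\theta}$, since $1-x$ winds once positively around $0$ when $x$ winds once positively around $1$. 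It gives $(\gamma_1)_*y_3=y_3$ and $(\gamma_1)_*y_4=e^{2\pi i(c-a-b)}y_4$. Writing these relations in the row-vector convention $(y_i,y_j)\mapsto(y_i,y_j)M$ yields exactly the two diagonal matrices in Theorem \ref{MONODOROMY}.

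Finally, the answer does not depend on the chosen representative loops. Analytic continuation along homotopic paths in $\mathbb{P}^1\setminus\{0,1,\infty\}$ gives the same result, by the monodromy theorem. Both circles lie in this domain and represent the standard generators of $G$, so the formulas hold for $\gamma_0,\gamma_1$ as elements of $G$.

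The only delicate point is the orientation and branch bookkeeping: checking that positive winding of $x$ about $1$ corresponds to positive winding of $1-x$ about $0$, and fixing the principal branches of $x^{1-c}$ and $(1-x)^{c-a-b}$ at the base point $1/2$. The hypotheses $c,\,c-a-b\notin\mathbb{Z}$ are needed only so that the $y_j$ are well defined and form fundamental systems. Once this is checked, the rest is routine.
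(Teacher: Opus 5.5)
Your argument is correct. The paper gives no proof of this statement and defers to the classical literature \cite{YH}; your computation (single-valued convergent series times the explicit factors $x^{1-c}$ and $(1-x)^{c-a-b}$, continued around the circles $|x|=1/2$ and $|1-x|=1/2$) is exactly the standard argument found there. One wording point: prescribing winding numbers about $0$ and $1$ does not determine an element of $G$ (for instance $\gamma_1\gamma_0\gamma_1^{-1}$ winds once about $0$ and zero times about $1$, yet its monodromy on $(y_1,y_2)$ is in general not diagonal), so ``since $|x|<1$ along such a loop'' is not a consequence but a convention: $\gamma_0,\gamma_1$ must be the standard generators homotopic to your two circles, which you do state explicitly at the end.
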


Let us now apply these classical results to our specific setting. 
Hereafter, for $\tau \in \mathbb{H}$, we define the modular transformations 
$T\tau = \tau+1$ and $S\tau = -1/\tau$, and denote the corresponding 
$2 \times 2$ representation matrices by $T$ and $S$. We also let $E$ 
denote the $2 \times 2$ identity matrix. Recall from Section \ref{SSKZHRSE} 
that the KZ equation can be transformed into a hypergeometric differential 
equation with parameters $a=-k/12$, $b=-(k-4)/12$, and $c=-(k-5)/6$. 
Throughout this paper, we will use the notations $y_1(x), y_2(x), y_3(x), 
y_4(x)$ to represent the fundamental systems around $x=0$ and $x=1$ 
associated specifically with these parameters. For these specific 
solutions, we establish the following linear independence.

\begin{proposition}\label{HGDF33}
  Provided that $k \not\equiv 0, 4, 5, 11 \pmod{12}$, the pair 
  $(y_1(x), y_3(x))$ forms a fundamental system of solutions for the 
  differential equation
  \begin{equation*}
    x(1-x)\frac{d^2 y}{dx^2} + \frac{(k-8)x - (k-5)}{6}\frac{dy}{dx} 
    - \frac{k(k-4)}{144}y = 0.
  \end{equation*}
\end{proposition}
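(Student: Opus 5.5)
The plan is to read off linear independence directly from the connection formula of Theorem \ref{CHGDE}, applied with the specific parameters $a=-k/12$, $b=-(k-4)/12$, $c=-(k-5)/6$.

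First I compute the two quantities whose integrality Theorem \ref{CHGDE} forbids. We have
\begin{equation*}
  c-a-b=\frac{-2(k-5)+k+(k-4)}{12}=\frac{1}{2},
\end{equation*}
so $c-a-b\notin\mathbb{Z}$ for every $k$. Next, $c\in\mathbb{Z}$ holds exactly when $k\equiv 5 \pmod 6$, i.e.\ $k\equiv 5,11\pmod{12}$. The hypothesis excludes these residues, so $c\notin\mathbb{Z}$ and Theorem \ref{CHGDE} applies. Since the local exponents at $x=1$ are $0$ and $c-a-b=1/2$, the solutions $y_3$ and $y_4$ are linearly independent: $y_3$ is holomorphic at $x=1$ with $y_3(1)=1$, while $y_4=(1-x)^{1/2}\cdot(\text{holomorphic, nonzero at }1)$ has a branch point there.

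By the connection formula,
\begin{equation*}
  y_1=\frac{\Gamma(c)\Gamma(c-a-b)}{\Gamma(c-a)\Gamma(c-b)}\,y_3
  +\frac{\Gamma(c)\Gamma(a+b-c)}{\Gamma(a)\Gamma(b)}\,y_4 .
\end{equation*}
Because $(y_3,y_4)$ is a basis, the pair $(y_1,y_3)$ is a basis precisely when the coefficient of $y_4$,
\begin{equation*}
  B:=\frac{\Gamma(c)\Gamma(-1/2)}{\Gamma(a)\Gamma(b)},
\end{equation*}
is nonzero.

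The key step is to check that $B\neq0$. The numerator is finite and nonzero: $c\notin\mathbb{Z}$ gives that $\Gamma(c)$ is finite and nonzero, and $\Gamma(-1/2)=-2\sqrt{\pi}\neq0$. Since $1/\Gamma$ vanishes only at the nonpositive integers, $B=0$ if and only if $a\in\mathbb{Z}_{\le0}$ or $b\in\mathbb{Z}_{\le0}$. These conditions mean $k\in12\mathbb{Z}_{\ge0}$ or $k-4\in12\mathbb{Z}_{\ge0}$, respectively. Both are ruled out by $k\not\equiv 0,4\pmod{12}$, hence $B\neq0$.

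If $B\neq0$ but $y_1$ and $y_3$ were dependent, we would have $y_1=\lambda y_3$. Substituting into the connection formula expresses $y_4$ as a multiple of $y_3$, which contradicts the independence of $y_3$ and $y_4$. Therefore $(y_1,y_3)$ is a fundamental system.

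The only delicate point, and the one I expect to be the main obstacle, is the bookkeeping of when $\Gamma(c)$ and $1/(\Gamma(a)\Gamma(b))$ degenerate. The hypothesis $k\not\equiv0,4,5,11\pmod{12}$ is a convenient sufficient condition that covers all degenerate cases, since it is stronger than the sharp conditions $k\notin 12\mathbb{Z}_{\ge0}\cup(4+12\mathbb{Z}_{\ge0})$ and $k\not\equiv5\pmod6$.
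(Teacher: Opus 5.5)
Your proposal is correct and follows essentially the same route as the paper: expand $y_1$ in the basis $(y_3,y_4)$ via the connection formula, use $a+b-c=-1/2$ and $c\notin\mathbb{Z}$ to see that the numerator of the $y_4$-coefficient $B$ is finite and nonzero, and use $k\not\equiv 0,4 \pmod{12}$ to rule out poles of $\Gamma(a)$ and $\Gamma(b)$. Your additional checks are accurate refinements of the paper's somewhat loose wording: that $c-a-b=1/2$ makes $(y_3,y_4)$ a genuine basis, and that the sharp degeneracy condition is $k\in 12\mathbb{Z}_{\ge0}\cup(4+12\mathbb{Z}_{\ge0})$.
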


\begin{proof}
  From Theorem \ref{CHGDE}, the solution $y_1(x)$ around $x=0$ can be
  expressed as a linear combination of the fundamental system 
  $(y_3(x), y_4(x))$ around $x=1$:
  \begin{equation*}
    y_1(x)=Ay_3(x)+By_4(x).
  \end{equation*}
  Here, the connection coefficients are given by
  \begin{equation*}
    A=\frac{\Gamma(c)\Gamma(c-a-b)}{\Gamma(c-a)\Gamma(c-b)},\qquad
    B=\frac{\Gamma(c)\Gamma(a+b-c)}{\Gamma(a)\Gamma(b)}.
  \end{equation*}
  Since $(y_3(x), y_4(x))$ constitutes a fundamental system, $y_1(x)$ is 
  linearly independent of $y_3(x)$ if and only if the coefficient of 
  $y_4(x)$ is non-zero (i.e., $B \neq 0$).

  Noting that $a+b-c=-1/2$, the coefficient $B$ simplifies to
  \begin{equation*}
    B=\frac{\Gamma\left(-\frac{k-5}{6}\right)\Gamma\left(-\frac{1}{2}\right)}
      {\Gamma\left(-\frac{k}{12}\right)\Gamma\left(-\frac{k-4}{12}\right)}.
  \end{equation*}
  Because $c = -(k-5)/6 \notin \mathbb{Z}$ (which is ensured by the 
  condition $k \not\equiv 5, 11 \pmod{12}$) and 
  $\Gamma(-1/2)=-2\sqrt{\pi} \neq 0$, the numerator of $B$ is finite and 
  non-zero. Consequently, $B=0$ if and only if the $\Gamma$ functions in 
  the denominator have poles and diverge to infinity.

  The Gamma function $\Gamma(z)$ has poles if and only if its argument 
  $z$ is a non-positive integer. Thus, the necessary and sufficient 
  condition for $B$ to have a pole is $k \equiv 0, 4 \pmod{12}$.
  Therefore, by avoiding these congruences (i.e., $k \not\equiv 0, 4, 5, 
  11 \pmod{12}$), we guarantee that $B \neq 0$, making $(y_1(x), y_3(x))$ 
  a valid fundamental system of solutions for the given equation.
\end{proof}

\begin{remark}
  When $k \equiv 6, 10 \pmod{12}$, a similar analysis reveals that
  $y_1(x)=By_4(x)$.
\end{remark}

Having established a suitable basis, we now construct modular functions 
by pulling back these solutions via the $j$-invariant. For 
$k \not\equiv 0, 4, 5, 11 \pmod{12}$, we compose the solutions $y_1(x)$ 
and $iy_3(x)$ from Proposition \ref{HGDF33} with the modular function 
$x = 1728/j(\tau)$ to obtain functions of $\tau$, which we denote by 
$y_1(\tau)$ and $iy_3(\tau)$, respectively. We shall refer to the pair 
$(y_1(\tau), iy_3(\tau))$ as the \emph{Stiller basis}. This naming pays 
homage to P. F. Stiller, who utilized a highly analogous framework in his 
works \cite{S1, S2, S3}.

To analyze the transformation behavior of this basis, we introduce the 
following constants:
\begin{equation*}
  \theta = \frac{1+k}{6}\pi,\quad
  C = \csc\theta,\quad
  Z_l = e^{2l i\theta},\quad
  W_l = (Z_l - 1)C,
\end{equation*}
and
\begin{equation*}
  G_{12} = \frac{\sqrt{\pi} \Gamma\left(\frac{1+k}{6}\right)}
  {4 \Gamma\left(\frac{2+k}{12}\right) \Gamma\left(\frac{6+k}{12}\right)},\qquad
  G_{21} = \frac{\sqrt{\pi} \Gamma\left(\frac{5-k}{6}\right)}
  {2 \Gamma\left(\frac{6-k}{12}\right) \Gamma\left(\frac{10-k}{12}\right)}.
\end{equation*}

\begin{proposition}\label{PPMVFSB}
  The Stiller basis satisfies the following transformation properties 
  under the generators of the modular group:
  \begin{align*}
    (y_1(T\tau), iy_3(T\tau))&=(y_1(\tau), iy_3(\tau))\rho(T),\\
    (y_1(S\tau), iy_3(S\tau))&=(y_1(\tau), iy_3(\tau))\rho(S).
  \end{align*} 
  Here, $\rho(T)$ and $\rho(S)$ are given by the $2 \times 2$ matrices:
  \begin{equation*}
    \rho(T)=\begin{pmatrix}
      1 & 4iG_{12}(1-Z_1) \\
      0 & Z_1
    \end{pmatrix},\quad
    \rho(S)=\begin{pmatrix}
      -1 & 0\\
      -4iG_{21} & 1
    \end{pmatrix}.
  \end{equation*}
\end{proposition}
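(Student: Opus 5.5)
The plan is to compute both transformation laws directly from the classical connection and monodromy data (Theorems \ref{CHGDE} and \ref{MONODOROMY}), using the parameters $a=-k/12$, $b=-(k-4)/12$, $c=-(k-5)/6$. The two facts that organise everything are that $x=1728/j(\tau)$ is $\mathrm{SL}_2(\mathbb{Z})$-invariant and that $1-x=E_6^2/E_4^3$. Since $x(T\tau)=x(\tau)=x(S\tau)$, each transformation law amounts to analytically continuing $y_1,y_3$ along the image in the $x$-plane of a path from $\tau$ to $M\tau$. I will choose these paths so that the images are the basic loops around $x=0$ and $x=1$. First I record the numerical simplifications:
\[
a+b-c+1=\tfrac12, \qquad 1-c=\tfrac{1+k}{6}, \qquad c-a-b=\tfrac12,
\]
\[
a-c+1=\tfrac{k+2}{12}, \qquad b-c+1=\tfrac{k+6}{12}, \qquad c-a=\tfrac{10-k}{12}, \qquad c-b=\tfrac{6-k}{12}.
\]
In particular $e^{2\pi i(1-c)}=e^{2i\theta}=Z_1$.

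For $T$, I take $\tau$ high in the upper half-plane and the horizontal segment from $\tau$ to $\tau+1$. Since $j=q^{-1}+744+\cdots$, we have $x\approx 1728q$, so this segment maps to a simple loop around $x=0$. Hence $y_1$, being holomorphic at $0$, is invariant, while $y_2\mapsto Z_1y_2$. Next I invert the connection formulas, or equivalently apply Theorem \ref{CHGDE} with the roles of $x$ and $1-x$ swapped, to write $y_3=\alpha y_1+\beta y_2$ with
\[
\alpha=\frac{\Gamma(1/2)\Gamma(1-c)}{\Gamma(a-c+1)\Gamma(b-c+1)}
=\frac{\sqrt\pi\,\Gamma(\frac{1+k}{6})}{\Gamma(\frac{k+2}{12})\Gamma(\frac{k+6}{12})}=4G_{12}.
\]
Substituting $\beta y_2=y_3-\alpha y_1$ gives
\[
y_3\mapsto \alpha y_1+Z_1(y_3-\alpha y_1)=Z_1y_3+4G_{12}(1-Z_1)y_1 .
\]
Multiplying by $i$ yields exactly the second column of $\rho(T)$.

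For $S$, I work near the fixed point $\tau=i$, where $j(i)=1728$, i.e.\ $x=1$. Since $x-1$ vanishes to order two at $\tau=i$ and $S$ acts locally as the half-turn about $i$, a small arc from $\tau$ to $S\tau$ around $i$ maps to a full loop around $x=1$. Therefore $y_3$ is invariant, while $y_4\mapsto e^{2\pi i(c-a-b)}y_4=-y_4$. As a consistency check, $(1-x)^{1/2}=E_6/E_4^{3/2}$ changes sign under $S$ with the continued branch. Writing $y_1=Ay_3+By_4$ with
\[
A=\frac{\Gamma(c)\Gamma(1/2)}{\Gamma(c-a)\Gamma(c-b)}=2G_{21},
\]
I obtain $y_1\mapsto Ay_3-By_4=-y_1+2Ay_3=-y_1-4iG_{21}\,(iy_3)$. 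Together with $iy_3\mapsto iy_3$, this is the matrix $\rho(S)$.

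The main obstacle is not the Gamma-function bookkeeping but justifying the path choices: one must make sure that the continuation along the chosen arcs in $\mathbb{H}$ really corresponds to $\gamma_0$ and $\gamma_1$ based compatibly. This means the branches of $y_3$ and $x^{1-c}$ must be taken on a common simply connected region, for instance the standard fundamental domain, whose image under $x$ is the slit plane, containing the base point $x=1/2$. I would fix the branches on that domain first and then check that both arcs are homotopic to the basic loops there, with no extra winding around the other singular point.
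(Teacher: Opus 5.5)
Your proposal is correct and follows the paper's own route: express the Stiller basis through the connection formulas in the local basis at $x=0$ (for $T$) and at $x=1$ (for $S$), conjugate the diagonal local monodromy, and identify the connection coefficients as $4G_{12}$ and $2G_{21}$. Beyond what the paper writes, you carry out the $S$ computation explicitly and address the choice of paths and branches, which the paper passes over as ``completely analogous''.
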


\begin{proof}
  As $\tau \mapsto T\tau = \tau+1$ near the cusp at infinity, the function 
  $j(\tau)$ has a simple pole, which means $x = 1728/j(\tau)$ traces a 
  full counterclockwise loop around the origin $x=0$. Therefore, applying 
  Theorem \ref{CHGDE} and Theorem \ref{MONODOROMY}, and using the values 
  $\Gamma(1/2)=\sqrt{\pi}$ and $\Gamma(3/2)=\sqrt{\pi}/2$, we can compute 
  the analytic continuation as follows:
  \begin{align*}
    (y_1(T\tau), iy_3(T\tau))&=(y_1(T\tau), y_2(T\tau))
      \begin{pmatrix}
        1 & 4G_{12}i\\
        0 & \frac{\sqrt{\pi}i\Gamma\left(-\frac{k+1}{6}\right)}
          {\Gamma\left(-\frac{k}{12}\right)\Gamma\left(\frac{4-k}{12}\right)}
      \end{pmatrix}\\
      &=(\gamma_0)_*(y_1(x), y_2(x))
        \begin{pmatrix}
          1 & 4G_{12}i\\
          0 & \frac{\sqrt{\pi}i\Gamma\left(-\frac{k+1}{6}\right)}
            {\Gamma\left(-\frac{k}{12}\right)\Gamma\left(\frac{4-k}{12}\right)}
        \end{pmatrix}\\
      &=(y_1(\tau), y_2(\tau))\begin{pmatrix}
        1 & \\
        & e^{(k+1)/3\pi i}
        \end{pmatrix}
        \begin{pmatrix}
          1 & 4G_{12}i\\
          0 & \frac{\sqrt{\pi}i\Gamma\left(-\frac{k+1}{6}\right)}
            {\Gamma\left(-\frac{k}{12}\right)\Gamma\left(\frac{4-k}{12}\right)}
        \end{pmatrix}\\
      &=(y_1(\tau), iy_3(\tau))\begin{pmatrix}
          1 & 4G_{12}i\\
          0 & \frac{\sqrt{\pi}i\Gamma\left(-\frac{k+1}{6}\right)}
          {\Gamma\left(-\frac{k}{12}\right)\Gamma\left(\frac{4-k}{12}\right)}
        \end{pmatrix}^{-1}\\
      &\qquad \times
        \begin{pmatrix}
          1 & \\
          & e^{(k+1)/3\pi i}
        \end{pmatrix}
        \begin{pmatrix}
          1 & 4G_{12}i\\
          0 & \frac{\sqrt{\pi}i\Gamma\left(-\frac{k+1}{6}\right)}
            {\Gamma\left(-\frac{k}{12}\right)\Gamma\left(\frac{4-k}{12}\right)}
        \end{pmatrix}\\
        &=(y_1(\tau), iy_3(\tau))\rho(T).
  \end{align*}
  Similarly, as $\tau \mapsto S\tau = -1/\tau$ near the elliptic point 
  $\tau = i$, the relation $j(i)=1728$ implies that $x = 1728/j(\tau)$ 
  traces a full loop around $x=1$. A completely analogous calculation 
  yields the transformation matrix $\rho(S)$, establishing 
  $(y_1(S\tau), iy_3(S\tau))=(y_1(\tau), iy_3(\tau))\rho(S)$.
\end{proof}

\begin{remark}
  The characteristic exponents of the hypergeometric differential equation
  in Proposition \ref{HGDF33} are $\lambda=(1+k)/6$, $\mu=1/2$, and 
  $\nu=1/3$. Consequently, the ratio of the Stiller basis, 
  $s(\tau)=iy_3(\tau)/y_1(\tau)$, is expected to be a single-valued 
  function in the neighborhoods of the elliptic points $i$ and 
  $e^{\pi i/3}$ of $\mathrm{SL}_2(\mathbb{Z})$. Indeed, the representation 
  matrices corresponding to the relations $S^2 = E$ and $(TS)^3 = E$ are 
  given by $\rho(S^2)=E$ and $\rho((TS)^3)=-ie^{3i\theta}E=-i^{2+k}E$, 
  respectively. Since these are scalar matrices, they trivially satisfy 
  $s(S^2\tau)=s(\tau)$ and $s((TS)^3\tau)=s(\tau)$.
\end{remark}

\section{Representation Matrices $M(t)$ and $N(r, s)$ Associated with 
Elements of Principal Congruence Subgroups}\label{RMMNCEC}

Continuing from the previous section, we define the parameters $\theta$, 
$Z_l$, $C$, $W_l$, and the constants $G_{12}$, $G_{21}$ as follows:
\begin{equation*}
  \theta = \frac{1+k}{6}\pi,\quad
  C = \csc\theta,\quad
  Z_l = e^{2l i\theta},\quad
  W_l = (Z_l - 1)C,
\end{equation*}
and
\begin{equation*}
  G_{12} = \frac{\sqrt{\pi} \Gamma\left(\frac{1+k}{6}\right)}
  {4 \Gamma\left(\frac{2+k}{12}\right) \Gamma\left(\frac{6+k}{12}\right)},\qquad
  G_{21} = \frac{\sqrt{\pi} \Gamma\left(\frac{5-k}{6}\right)}
  {2 \Gamma\left(\frac{6-k}{12}\right) \Gamma\left(\frac{10-k}{12}\right)}.
\end{equation*}

The primary objective of this section is to provide explicit 
representations for the matrices corresponding to the modular 
transformations $(T^tS)^3$ and $(T^rST^sS)^2$ for integers $t, r, s$. 
Our motivation for focusing specifically on these modular words lies in 
their relation to the principal congruence subgroups $\Gamma(N)$. 
As demonstrated in the following two lemmas, by appropriately choosing 
the integers $t, r$, and $s$, these elements fall into specific principal 
congruence subgroups.

\begin{lemma}\label{TTS3EME}
  For any integer $t \neq 0$, we have $-(T^tS)^3 \in \Gamma(t+1)$.
\end{lemma}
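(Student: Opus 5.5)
Direct matrix computation. With $T=\begin{pmatrix}1&1\\0&1\end{pmatrix}$ and $S=\begin{pmatrix}0&-1\\1&0\end{pmatrix}$, we have
\begin{equation*}
  T^tS=\begin{pmatrix} t & -1\\ 1 & 0\end{pmatrix}.
\end{equation*}
We will compute its cube explicitly and then reduce modulo $t+1$.

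First, squaring gives
\begin{equation*}
  (T^tS)^2=\begin{pmatrix} t^2-1 & -t\\ t & -1\end{pmatrix}.
\end{equation*}
Multiplying once more by $T^tS$ yields
\begin{equation*}
  (T^tS)^3=\begin{pmatrix} t^3-2t & 1-t^2\\ t^2-1 & -t\end{pmatrix}.
\end{equation*}
As a check, the determinant is $-t^4+2t^2+(t^2-1)^2=1$, so the product lies in $\mathrm{SL}_2(\mathbb{Z})$ as it must.

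Next we reduce modulo $N=t+1$. Here $t\equiv -1$, so $t^2\equiv 1$ and $t^3\equiv -1$. The entries then reduce as follows: $t^3-2t\equiv -1+2=1$, both $1-t^2$ and $t^2-1$ are $\equiv 0$, and $-t\equiv 1$. Hence $(T^tS)^3\equiv E \pmod{t+1}$. This gives $(T^tS)^3\in\Gamma(t+1)$, and $-(T^tS)^3\in\Gamma(t+1)$ modulo the sign.

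Here lies the only real obstacle: the sign. Literally $-(T^tS)^3\equiv -E$, so the statement as written holds only if $-E\equiv E$, i.e. $t+1\mid 2$, or if $\Gamma(N)$ is understood up to $\pm E$ (equivalently, in $\mathrm{PSL}_2$ or in $\pm\Gamma(N)$). Note also that $N=t+1$ can be $\le 0$, where $\Gamma(|t+1|)$ is intended; for $t=-1$ the lemma is trivial since $\Gamma(0)$ is interpreted as everything or the statement is vacuous. So I would first check which convention the paper uses, expecting the $-$ to reflect the paper's convention for $S$. With $S$ taken as $\begin{pmatrix}0&1\\-1&0\end{pmatrix}$, the same computation gives $-(T^tS)^3\equiv E$ exactly. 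In the proof I would either adopt that convention or state the result in $\pm\Gamma(t+1)$, noting that this suffices for the later monodromy argument, since $\rho(-E)$ is a scalar and does not affect commutativity.
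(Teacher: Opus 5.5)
Your argument is correct and is essentially the paper's: compute $(T^tS)^3$ explicitly and reduce modulo $t+1$. Your sign diagnosis is also right. The paper's displayed matrix is exactly the negative of yours, so it implicitly takes $S=\begin{pmatrix}0&1\\-1&0\end{pmatrix}$; with that choice $(T^tS)^3\equiv -E$ and $-(T^tS)^3\equiv E \pmod{t+1}$ exactly. One small correction to your aside: $t=-1$ needs no special reading. With this convention $-(T^{-1}S)^3=E$ exactly, and $\Gamma(0)$ would be $\{E\}$, not all of $\mathrm{SL}_2(\mathbb{Z})$.
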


\begin{proof}
  By a direct matrix calculation in $\mathrm{SL}_2(\mathbb{Z})$, we obtain
  \begin{equation*}
    (T^tS)^3
    =\begin{pmatrix} -1-(t+1)(t^2-t-1) & (t+1)(t-1) \\ 
    -(t+1)(t-1) & (t+1)-1 \end{pmatrix}
    \equiv -E \pmod{t+1}.
  \end{equation*}
  This congruence implies that $-(T^tS)^3$ belongs to the principal 
  congruence subgroup $\Gamma(t+1)$.
\end{proof}

\begin{lemma}\label{TRTS2EQMEMODRSM2}
  For any integers $r \neq 1$ and $s \neq 1$, we have 
  $-(T^rST^sS)^2 \in \Gamma(rs-2)$.
\end{lemma}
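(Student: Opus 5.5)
The plan is a direct matrix computation, as in Lemma \ref{TTS3EME}, shortened by the Cayley--Hamilton theorem so that the square never has to be expanded entry by entry. With $S=\begin{pmatrix}0&-1\\1&0\end{pmatrix}$ and $T^r=\begin{pmatrix}1&r\\0&1\end{pmatrix}$, I would first compute
\begin{equation*}
  T^rS=\begin{pmatrix} r & -1\\ 1 & 0\end{pmatrix},\qquad
  M:=T^rST^sS=\begin{pmatrix} r & -1\\ 1 & 0\end{pmatrix}\begin{pmatrix} s & -1\\ 1 & 0\end{pmatrix}
  =\begin{pmatrix} rs-1 & -r\\ s & -1\end{pmatrix}.
\end{equation*}
Thus $M\in\mathrm{SL}_2(\mathbb{Z})$ and $\operatorname{tr}M=rs-2$.

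Next, since $\det M=1$, the Cayley--Hamilton theorem gives $M^2-(\operatorname{tr}M)M+E=0$, that is,
\begin{equation*}
  (T^rST^sS)^2=M^2=(rs-2)M-E .
\end{equation*}
Every entry of $(rs-2)M$ is divisible by $rs-2$. Hence $(T^rST^sS)^2\equiv -E \pmod{rs-2}$, and therefore $-(T^rST^sS)^2\equiv E\pmod{rs-2}$. This says exactly that $-(T^rST^sS)^2\in\Gamma(rs-2)$, reading $\Gamma(N)$ as $\Gamma(|N|)$ when $rs-2<0$.

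The only point needing care is the bookkeeping of the level $N=rs-2$, not any algebra. The hypothesis $r\neq1$, $s\neq1$ presumably excludes degenerate words such as $r=s=1$, where $M$ has trace $-1$ and the level is trivial. In the remaining small cases I would simply note what the identity says. If $rs-2=\pm1$, the statement holds trivially since $\Gamma(1)=\mathrm{SL}_2(\mathbb{Z})$. If $rs=2$, the identity gives $M^2=-E$ exactly, which is consistent with the convention $\Gamma(0)=\{E\}$.

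For completeness I would also display the explicit product $M^2=\begin{pmatrix}(rs-1)^2-rs & -r(rs-2)\\ s(rs-2) & 1-rs\end{pmatrix}$. This lets the reader check the congruence entrywise, in parallel with the proof of Lemma \ref{TTS3EME}.
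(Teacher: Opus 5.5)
Your proof is correct and is essentially the paper's argument: the paper writes out the entries of $(T^rST^sS)^2$ and reads off the congruence $\equiv -E \pmod{rs-2}$, and your identity $M^2=(rs-2)M-E$ gives those same entries in one line. Incidentally, your $(2,1)$ entry $s(rs-2)$ is the correct one (the paper writes $-s(rs-2)$), though the sign has no effect on the congruence.
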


\begin{proof}
  Similarly, a direct computation yields
  \begin{equation*}
    (T^rST^sS)^2
    =\begin{pmatrix} (r s - 2)^2 + (r s - 2) - 1 & -r (rs - 2) \\
      -s(rs - 2) & -1-(rs-2) \end{pmatrix}
    \equiv -E \pmod{rs-2}.
  \end{equation*}
  Thus, $-(T^rST^sS)^2$ resides in $\Gamma(rs-2)$.
 \end{proof}

Having established the algebraic significance of these elements, we now 
derive the explicit form of the representation matrix corresponding to 
$(T^t S)^3$.

\begin{theorem}\label{THEXFM}
  For $t \neq 0$, the representation matrix $M(t) = \rho((T^tS)^3)$ 
  corresponding to $(T^t S)^3$ is given explicitly by
  \begin{equation*}
    M(t) = -\frac{1}{2}Z_t W_t E
      + K_t \begin{pmatrix} -\frac{1}{8}(2Z_t + W_t) & -i G_{12}(Z_t - 1) \\
        -i G_{21} Z_t & \frac{1}{4} Z_t \end{pmatrix},
  \end{equation*}
  where the coefficient $K_t$ is defined as
  \begin{equation*}
    K_t = C^2 \left( (Z_t - 1)^2 + \frac{4Z_t}{C^2} \right) 
    = C^2(Z_t - 1)^2 + 4Z_t = W_t^2 + 4Z_t.
  \end{equation*}
\end{theorem}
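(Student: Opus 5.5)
The plan is to compute $M(t)=\rho((T^tS)^3)=(\rho(T)^t\rho(S))^3$ directly from Proposition \ref{PPMVFSB}, using the homomorphism property of $\rho$. The order convention must match the right action $(y_1(\gamma\tau),iy_3(\gamma\tau))=(y_1(\tau),iy_3(\tau))\rho(\gamma)$, so I will first check that it gives $\rho(AB)=\rho(A)\rho(B)$ rather than the reverse. The sanity check $\rho((TS)^3)=-ie^{3i\theta}E$ in the remark fixes this.

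First I will compute $\rho(T)^t$. Since $\rho(T)$ is upper triangular with diagonal entries $1,Z_1$, induction (or the geometric sum) gives
\[
\rho(T)^t=\begin{pmatrix}1 & 4iG_{12}(1-Z_t)\\ 0 & Z_t\end{pmatrix},
\]
because the off-diagonal entry is $4iG_{12}(1-Z_1)(1+Z_1+\dots+Z_1^{t-1})=4iG_{12}(1-Z_t)$. This also holds for negative $t$. Next I form $P=\rho(T)^t\rho(S)$:
\[
P=\begin{pmatrix}-1+16G_{12}G_{21}(1-Z_t) & 4iG_{12}(1-Z_t)\\ -4iG_{21}Z_t & Z_t\end{pmatrix}.
\]
Here $\det P=-Z_t$.

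The key simplification is to evaluate the product $G_{12}G_{21}$ in closed form in terms of $C=\csc\theta$. I will use the duplication formula on $\Gamma(\frac{1+k}{6})$ and $\Gamma(\frac{5-k}{6})$, writing $\Gamma(2z)=\pi^{-1/2}2^{2z-1}\Gamma(z)\Gamma(z+\tfrac12)$ with $2z=\frac{1+k}{6}$. This gives $\Gamma(\frac{1+k}{6})=\pi^{-1/2}2^{(k-5)/6}\Gamma(\frac{1+k}{12})\Gamma(\frac{7+k}{12})$, and similarly for the other factor. I will then pair the remaining Gamma values through the reflection formula $\Gamma(z)\Gamma(1-z)=\pi/\sin\pi z$. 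The arguments $\frac{2+k}{12}$ and $\frac{10-k}{12}$ sum to $1$, as do $\frac{6+k}{12}$ and $\frac{6-k}{12}$, so those two pairs combine directly. The duplication may instead be better applied to produce exactly these complementary pairs. I expect $16G_{12}G_{21}$ to reduce to a simple trigonometric expression, presumably $C^2$ or $\frac{1}{4}C^2$ up to a constant. This determination is the main obstacle, since the exact constant controls whether the stated formula with $K_t=W_t^2+4Z_t$ emerges. If the duplication route is messy, I will instead use the relation $\rho((TS)^3)=-ie^{3i\theta}E$ from the remark as a constraint: at $t=1$, $P^3$ must be scalar, which forces $\operatorname{tr}P$ to satisfy $\operatorname{tr}(P)^2=\det P$, i.e. a trace relation. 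Solving that identity for $G_{12}G_{21}$ yields its value in terms of $Z_1$ and hence $C$.

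With $G_{12}G_{21}$ known, I will apply Cayley–Hamilton to obtain the cube: $P^2=(\operatorname{tr}P)P-(\det P)E$, so
\[
P^3=\big((\operatorname{tr}P)^2-\det P\big)P-(\operatorname{tr}P)(\det P)E.
\]
Writing $\operatorname{tr}P=-1+Z_t+16G_{12}G_{21}(1-Z_t)$ in terms of $W_t$, $C$ and $Z_t$ should identify $(\operatorname{tr}P)^2+Z_t$ with a multiple of $K_t=W_t^2+4Z_t$. Finally, I will split off the scalar part to match the stated form: the coefficient matrix $K_t\begin{pmatrix}\cdot\end{pmatrix}$ reproduces $P$ up to the normalisation $\frac14$ and a scalar shift. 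This final matching is routine algebra once the value of $G_{12}G_{21}$ is fixed.
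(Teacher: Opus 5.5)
Your outline ($\rho(T)^t$ via the geometric sum, $P=\rho(T)^t\rho(S)$ with $\det P=-Z_t$, then Cayley--Hamilton) is exactly the paper's proof. The one non-routine step is the closed form of $16G_{12}G_{21}$, and you leave it open. Your prediction for it is also wrong: it is not of the shape $C^2$ or $\tfrac14C^2$ but linear in $C$, namely $1+\tfrac{C}{2}$. No duplication formula is needed, because $\frac{1+k}{6}$ and $\frac{5-k}{6}$ are also complementary. Three applications of the reflection formula and one product-to-sum step give
\[
16G_{12}G_{21}=\frac{2\pi\,\Gamma(\frac{1+k}{6})\Gamma(\frac{5-k}{6})}{\Gamma(\frac{2+k}{12})\Gamma(\frac{10-k}{12})\Gamma(\frac{6+k}{12})\Gamma(\frac{6-k}{12})}
=\frac{2\sin\frac{(2+k)\pi}{12}\cos\frac{k\pi}{12}}{\sin\theta}
=\frac{\sin\theta+\frac12}{\sin\theta}=1+\frac{C}{2}.
\]
Only with this value do you get $P_{11}=-Z_t-\tfrac12W_t$, hence $\operatorname{tr}P=-\tfrac12W_t$ and $P^3=\tfrac{K_t}{4}P-\tfrac12Z_tW_tE$. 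That is precisely the stated formula: the displayed matrix multiplying $K_t$ is exactly $\tfrac14P$, with no extra scalar shift.

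Your fallback does not close the gap as formulated. At $t=1$ the condition $(\operatorname{tr}P)^2=\det P$ reads $(1-Z_1)^2(16G_{12}G_{21}-1)^2=-Z_1$, i.e.\ $(16G_{12}G_{21}-1)^2=C^2/4$. This leaves $16G_{12}G_{21}=1\pm\frac{C}{2}$, and both choices make $P^3$ scalar. The wrong sign gives $\operatorname{tr}P=+\tfrac12W_t$ and a different $M(t)$. To pin the sign you need the actual scalar value: $P^3=Z_1(\operatorname{tr}P)E=-ie^{3i\theta}E$ forces $\operatorname{tr}P=-ie^{i\theta}=-\tfrac12W_1$. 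However, the paper only asserts that value in a remark, and computing it from $\rho(T),\rho(S)$ already requires the identity you are after. You would have to justify it independently, e.g.\ from the local exponents at $x=\infty$, or by fixing the sign at $k=2$ (where $16G_{12}G_{21}=3/2$ and $C=1$) and continuing analytically in $k$.

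Likewise, the $(TS)^3$ sanity check cannot decide the multiplication order. Since $\rho(S)\rho(T)^t=\rho(S)P\rho(S)^{-1}$, the two cubes coincide whenever they are scalar but differ for general $t$. The stated formula is $(\rho(T)^t\rho(S))^3$, so you must adopt $\rho(AB)=\rho(A)\rho(B)$ as a convention, as the paper does. Read literally, $(y_1(\gamma\tau),iy_3(\gamma\tau))=(y_1(\tau),iy_3(\tau))\rho(\gamma)$ gives the reversed order and a $\rho(S)$-conjugate of the stated $M(t)$.
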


\begin{proof}
  We first note that the representation preserves the group structure, 
  so $\rho((T^tS)^3) = (\rho(T)^t \rho(S))^3$. Let us denote the inner 
  matrix by $A(t) = \rho(T)^t \rho(S)$. By direct computation using the 
  matrices given in Proposition \ref{PPMVFSB}, we find
  \begin{align*}
    A(t) &= \begin{pmatrix} 1 & 4iG_{12}(1 - Z_t) \\ 0 & Z_t \end{pmatrix} 
    \begin{pmatrix} -1 & 0 \\-4iG_{21} & 1 \end{pmatrix}\\
    &= \begin{pmatrix} -1 + 16G_{12}G_{21}(1 - Z_t) & 4iG_{12}(1 - Z_t) \\ 
    -4iG_{21}Z_t & Z_t \end{pmatrix}.
  \end{align*}
  Here, we apply the reflection formula for the Gamma function, 
  $\Gamma(z)\Gamma(1-z) = \pi/\sin(\pi z)$, along with the sum-to-product 
  formulas for trigonometric functions, to evaluate the product 
  $16G_{12}G_{21}$:
  \begin{equation*}
    16G_{12}G_{21} = \frac{\pi\Gamma\left(\frac{1+k}{6}\right)
    \Gamma\left(\frac{5-k}{6}\right)}
    {8\Gamma\left(\frac{2+k}{12}\right)\Gamma\left(\frac{6+k}{12}\right)
    \Gamma\left(\frac{6-k}{12}\right)\Gamma\left(\frac{10-k}{12}\right)}
    = \frac{1+2\sin\theta}{2\sin\theta} = \frac{1}{2}C+1.
  \end{equation*}
  Substituting this identity and the definition $W_t = (Z_t-1)C$ into 
  the expression for $A(t)$ simplifies it to
  \begin{equation}\label{ATCMTX}
    A(t) = \begin{pmatrix} -\frac{1}{2}(2Z_t + W_t) & -4iG_{12}(Z_t - 1) \\ 
    -4iG_{21}Z_t & Z_t \end{pmatrix}.
  \end{equation}
  From this matrix, we can easily read off its trace as 
  $\mathrm{Tr}(A(t)) = -W_t/2$ and its determinant as $\det(A(t)) = -Z_t$. 
  Applying the Cayley-Hamilton theorem, $A(t)^2 - \mathrm{Tr}(A(t))A(t) 
  + \det(A(t))E = 0$, allows us to recursively compute $A(t)^3$:
  \begin{align*}
    A(t)^3 &= \mathrm{Tr}(A(t))A(t)^2 - \det(A(t))A(t) \\
    &= -\frac{1}{2}W_t A(t)^2 + Z_t A(t) \\
    &= -\frac{1}{2}Z_t W_t E 
    + K_t \begin{pmatrix} -\frac{1}{8}(2Z_t + W_t) & -i G_{12}(Z_t - 1) \\ 
    -i G_{21} Z_t & \frac{1}{4} Z_t \end{pmatrix},
  \end{align*}
  which completes the proof.
\end{proof}

We now proceed to determine the explicit representation matrix for the 
element $(T^r S T^s S)^2$, utilizing the matrix $A(t)$ derived above.

\begin{theorem}\label{THEXFN}
  For integers $r \neq 1$ and $s \neq 1$, the representation matrix 
  $N(r, s) = \rho((T^r S T^s S)^2)$ corresponding to $(T^r ST^s S)^2$ 
  is explicitly given by
  \begin{equation*}
    N(r, s) = -Z_r Z_s E + \frac{1}{2}K_{r,s}
    \begin{pmatrix}
      \frac{1}{2} + \frac{1}{8} \big( 2(Z_s - 1) + W_s \big)(2 + W_r) 
      & i G_{12} \big( 2(Z_s - Z_r) + (Z_r - 1)W_s \big) \\
      i G_{21} Z_r W_s & \frac{1}{4} Z_r(2 - W_s)
    \end{pmatrix},
  \end{equation*}
  where the coefficient $K_{r,s}$ is defined as 
  $K_{r,s} = W_r W_s + 4(Z_r + Z_s)$.
\end{theorem}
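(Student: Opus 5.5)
The plan is to reduce everything to the matrix $A(t)=\rho(T)^t\rho(S)$ already computed in \eqref{ATCMTX}. Since $\rho$ is a representation, $N(r,s)=\rho((T^rST^sS)^2)=B^2$ with
\begin{equation*}
  B = B(r,s) := A(r)A(s).
\end{equation*}
As in the proof of Theorem \ref{THEXFM}, I will compute $\mathrm{Tr}(B)$ and $\det(B)$, and then apply the Cayley--Hamilton identity
\begin{equation*}
  B^2=\mathrm{Tr}(B)\,B-\det(B)\,E .
\end{equation*}

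The determinant is immediate: $\det A(t)=-Z_t$, so $\det B=Z_rZ_s$. This accounts for the scalar term $-Z_rZ_sE$ in the statement. It therefore remains to show two things: that $\mathrm{Tr}(B)=\tfrac12K_{r,s}$, and that $B$ equals the displayed $2\times2$ matrix.

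Next I will multiply out $A(r)A(s)$ entrywise, using \eqref{ATCMTX}. Every product of off-diagonal entries pairs a $G_{12}$ with a $G_{21}$, contributing a factor
\begin{equation*}
  (-4iG_{12})(-4iG_{21})=-16G_{12}G_{21}=-\bigl(\tfrac12C+1\bigr),
\end{equation*}
by the identity established in the proof of Theorem \ref{THEXFM}. Each such factor will then be rewritten with $W_t=(Z_t-1)C$, so that $C$ only ever appears inside some $W$. For example, for the $(2,2)$ entry,
\begin{equation*}
  B_{22}=-\bigl(\tfrac12C+1\bigr)Z_r(Z_s-1)+Z_rZ_s=Z_r\bigl(1-\tfrac12W_s\bigr)=\tfrac14 Z_r(2-W_s),
\end{equation*}
which matches the stated $(2,2)$ entry. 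The $(1,2)$ and $(2,1)$ entries are handled the same way and yield $iG_{12}(2(Z_s-Z_r)+(Z_r-1)W_s)$ and $iG_{21}Z_rW_s$. For the $(1,1)$ entry I will expand $\tfrac14(2Z_r+W_r)(2Z_s+W_s)-(\tfrac12C+1)(Z_r-1)Z_s$ and regroup it as $\tfrac12+\tfrac18(2(Z_s-1)+W_s)(2+W_r)$.

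The main obstacle is this regrouping of the $(1,1)$ entry, and with it the trace identity $\mathrm{Tr}(B)=\tfrac12(W_rW_s+4(Z_r+Z_s))$. The expansion produces a cross term $C(Z_r-1)Z_s$ that must be converted into $W$'s. If a naive expansion leaves residual terms, I would first try to absorb them using the elementary relation
\begin{equation*}
  W_t=(Z_t-1)C=2i\,e^{it\theta}\frac{\sin t\theta}{\sin\theta},
\end{equation*}
together with the fact that $Z_t=Z_1^t$. This should let me verify the trace identity directly, as a consistency check on the entries.

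Once $\mathrm{Tr}(B)=\tfrac12K_{r,s}$ and the entries of $B$ are in hand, Cayley--Hamilton gives
\begin{equation*}
  N(r,s)=\tfrac12K_{r,s}\,B-Z_rZ_sE,
\end{equation*}
which is exactly the stated formula. The hypotheses $r\neq1$ and $s\neq1$ play no role in the computation itself; they matter only for Lemma \ref{TRTS2EQMEMODRSM2}.
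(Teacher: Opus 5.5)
Your strategy is the paper's: write $N(r,s)=B^2$ with $B=A(r)A(s)$, use $\det B=Z_rZ_s$, then apply Cayley--Hamilton. However, the two identities you set out to prove are both false, each by a factor of $2$, and the errors only cancel at the end.

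Your own $(2,2)$ computation already contains the slip: $Z_r\bigl(1-\tfrac12W_s\bigr)=\tfrac12Z_r(2-W_s)$, not $\tfrac14Z_r(2-W_s)$. Carrying out the products correctly gives
\begin{equation*}
  B=\begin{pmatrix}
    Z_s+\frac12Z_rW_s+\frac14W_rW_s & 2iG_{12}\bigl(2(Z_s-Z_r)+(Z_r-1)W_s\bigr)\\
    2iG_{21}Z_rW_s & Z_r-\frac12Z_rW_s
  \end{pmatrix}.
\end{equation*}
So each entry of $B$ is twice the corresponding entry of the displayed matrix; for the $(1,1)$ entry, expand and use $(Z_s-1)W_r=(Z_r-1)W_s$. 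Likewise $\mathrm{Tr}(B)=Z_r+Z_s+\tfrac14W_rW_s=\tfrac14K_{r,s}$, not $\tfrac12K_{r,s}$.

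The admissible case $r=s=0$ makes this unmistakable. There $B=\rho(S)^2=E$ has trace $2$, while $\tfrac12K_{0,0}=4$, and the displayed matrix is $\tfrac12E$. Hence the step you flag as the main obstacle cannot be completed. No rewriting of $W_t$ as $2ie^{it\theta}\sin(t\theta)/\sin\theta$ will turn $\tfrac14K_{r,s}$ into $\tfrac12K_{r,s}$ (unless $K_{r,s}=0$), and your regrouping of $B_{11}$ will always be off by exactly a factor of $2$.

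The repair is immediate. The displayed matrix is $\tfrac12B$, and Cayley--Hamilton gives
\begin{equation*}
  N(r,s)=\mathrm{Tr}(B)\,B-\det(B)\,E=\tfrac14K_{r,s}B-Z_rZ_sE=\tfrac12K_{r,s}\cdot\tfrac12B-Z_rZ_sE,
\end{equation*}
which is the stated formula. With that correction your argument coincides with the paper's.

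Note also that the paper's own list of entries has $B_{21}=-2iG_{21}(4Z_rZ_s+Z_rW_s)$. This is a misprint for $2iG_{21}Z_rW_s$; again $r=s=0$ forces $B_{21}=0$. Its other three entries agree with the matrix above.
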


\begin{proof}
  Let us define $B(r, s) = A(r)A(s)$. Using the explicit form of $A(t)$ 
  established in equation \eqref{ATCMTX} during the proof of Theorem 
  \ref{THEXFM}, we can compute the $(i, j)$-entry $B_{ij}$ of $B(r, s)$:
  \begin{align*}
    B_{11} &= Z_s+\frac{1}{2}Z_rW_s+\frac{1}{4}W_rW_s,
    & B_{12} &= -2i G_{12}\left(2(Z_r-Z_s)-W_r(Z_s-1)\right),\\
    B_{21} &= -2i G_{21} \left( 4Z_r Z_s + Z_r W_s \right),
    & B_{22} &= Z_r - \frac{1}{2}Z_r W_s.
  \end{align*}
  Since $N(r, s) = \rho((T^r S T^s S)^2) = B(r, s)^2$, we can again apply 
  the Cayley-Hamilton theorem to $B(r, s)$. Using the symmetric identity 
  $(Z_s - 1)W_r = (Z_r - 1)W_s$ and carrying out the straightforward matrix 
  algebra, we obtain the desired explicit formula for $N(r, s)$.
\end{proof}

\section{Algebras Formed by Matrices $N(r, s)$ and $N(u, v)$ and Their 
Commutativity}\label{SSANN}

The primary objective of this section is to completely answer the question 
of when the connection matrix $N(r, s)$ commutes with another matrix 
$N(u, v)$ that differs only in its parameters. Our main result is 
summarized in the following theorem.

\begin{theorem}\label{THNNC}
  For the connection matrices $N(r, s)$ and $N(u,v)$, the commutativity 
  relation $N(r,s)N(u,v) = N(u,v)N(r,s)$ holds if and only if one of the 
  following conditions is satisfied:
  \begin{enumerate}
    \item $K_{r,s}K_{u, v}=0$,
    \item $Z_s=Z_v$ and $Z_r = Z_u$,
    \item $Z_s=Z_v=1$,
    \item $Z_r=Z_u=1$, 
    \item $C=2$ and $Z_s(Z_s-1)-Z_v(Z_v-1)=Z_sZ_v(Z_s-Z_v)$.
  \end{enumerate}
\end{theorem}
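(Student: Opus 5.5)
Write $N(r,s) = -Z_rZ_sE + \tfrac12 K_{r,s}P(r,s)$, where $P(r,s)$ is the matrix in Theorem \ref{THEXFN}. Scalar matrices commute with everything, so
\begin{equation*}
  [N(r,s),N(u,v)] = \tfrac14 K_{r,s}K_{u,v}\,[P(r,s),P(u,v)].
\end{equation*}
If $K_{r,s}K_{u,v}=0$ we are in condition (1). Otherwise the problem reduces to deciding when $[P(r,s),P(u,v)]=0$.

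For $2\times 2$ matrices $P=(p_{ij})$ and $Q=(q_{ij})$, a direct computation shows that $PQ=QP$ holds if and only if the three $2\times 2$ minors of the matrix
\begin{equation*}
  \begin{pmatrix} p_{12} & p_{21} & p_{11}-p_{22} \\ q_{12} & q_{21} & q_{11}-q_{22} \end{pmatrix}
\end{equation*}
all vanish. In other words, the traceless parts of $P$ and $Q$ are proportional. This replaces the matrix identity by three scalar equations, which I would write out explicitly:
\begin{align*}
  &p_{12}q_{21}-p_{21}q_{12}=0,\\
  &p_{12}(q_{11}-q_{22})-q_{12}(p_{11}-p_{22})=0,\\
  &p_{21}(q_{11}-q_{22})-q_{21}(p_{11}-p_{22})=0.
\end{align*}
In the first equation the common factor $-G_{12}G_{21}$ drops out, leaving
\begin{equation*}
  \bigl(2(Z_s-Z_r)+(Z_r-1)W_s\bigr)Z_uW_v=\bigl(2(Z_v-Z_u)+(Z_u-1)W_v\bigr)Z_rW_s.
\end{equation*}
In the second and third equations a single factor $G_{12}$ (resp.\ $G_{21}$) appears and can be cancelled, once I check separately that $G_{12}G_{21}\neq0$. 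This holds because $16G_{12}G_{21}=\tfrac12C+1$, and the degenerate case $C=-2$ has to be handled by hand.

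Next I substitute $W_l=(Z_l-1)C$, so that every equation becomes a polynomial identity in $Z_r,Z_s,Z_u,Z_v$ and $C$. The relation $C^2=-4Z_1/(Z_1-1)^2$ is available if needed, but I would first try to keep $C$ as a free parameter. The plan is to factor these polynomials.

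From the first equation I expect the factors $Z_s-Z_v$, $Z_r-Z_u$, and factors vanishing when $Z_s=Z_v=1$ (so that $W_s=W_v=0$) or $Z_r=Z_u=1$. Indeed, when $W_s=W_v=0$ both $p_{21}$ and $q_{21}$ vanish. Combining the first equation with the second, which contains the term $\tfrac12$ and the product $(2+W_r)$, should single out the exceptional value $C=2$. At $C=2$ the remaining factor should reduce to $Z_s(Z_s-1)-Z_v(Z_v-1)=Z_sZ_v(Z_s-Z_v)$.

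For each of the cases (2)--(5) I would then verify directly that all three minors vanish, which gives the "if" direction.

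The main obstacle is the factorization and case analysis of the system of three polynomial equations. I have to show that the only common solutions are exactly (2)--(5), without losing branches where some entry such as $p_{12}$, $p_{21}$ or $Z_rW_s$ vanishes. I would handle this by splitting on whether $W_s=0$ and whether $W_v=0$, and in each branch solve the first equation linearly in $Z_u$ (or $Z_r$) before substituting into the second. I would also double-check the situation where $P(r,s)$ is itself scalar, since then commutation is automatic; it should turn out to be absorbed into the listed conditions.
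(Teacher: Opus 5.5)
Your framework is sound and is essentially the paper's. You strip the scalar parts, so that for $K_{r,s}K_{u,v}\neq0$ commutativity means the traceless parts of $P(r,s)$ and $P(u,v)$ are proportional; then you substitute $W_l=(Z_l-1)C$ and split on $W_s,W_v$. The paper instead uses the ratio form of your minor criterion (Lemma~\ref{LEMCOMM2X2}, which requires $p_{21}q_{21}\neq0$) plus an eigenvector argument when $W_s=0$; your three minors need no such assumption.

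\textbf{The gap.} You stop exactly where the content lies, and each outcome you predict is wrong. Put $x=Z_r$, $y=Z_s$. Then
$p_{12}=iG_{12}\bigl(2(y-x)+C(x-1)(y-1)\bigr)$, $p_{21}=iG_{21}Cx(y-1)$, and $2(p_{11}-p_{22})=y-x+Cx(y-1)+\tfrac{C^2}{4}(x-1)(y-1)$.
For $C\neq-2$, divide the first two entries by $iG_{12}$ and $iG_{21}$. Subtracting half the first and all of the second from the third leaves $\tfrac{C(C-2)}{4}(x-1)(y-1)$. For $C\neq2$, further invertible coordinate changes, applied to both rows so ranks are preserved, give $(y-1,\,x-1,\,(x-1)(y-1))$. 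Hence for $C\neq\pm2$ and $K_{r,s}K_{u,v}\neq0$,
\[
N(r,s)N(u,v)=N(u,v)N(r,s)\iff
\operatorname{rank}\begin{pmatrix} Z_s-1 & Z_r-1 & (Z_r-1)(Z_s-1)\\ Z_v-1 & Z_u-1 & (Z_u-1)(Z_v-1)\end{pmatrix}\le 1 .
\]
The solutions of this rank condition are (2), (3), (4) \emph{and} the zero-row case $Z_r=Z_s=1$ or $Z_u=Z_v=1$. In the zero-row case $N(r,s)=E$; for example $N(0,0)=\rho(S)^4=E$ commutes with everything. Yet for generic $k$ and $(u,v)=(2,3)$ none of (1)--(5) holds. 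So your expectation that the scalar case is ``absorbed into the listed conditions'' is false, and it must be added to the list. The paper's Proposition~\ref{SMNEQNM} has the same hole: a scalar $N(r,s)$ shares every eigenvector, so the eigenvector argument does not apply.

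\textbf{The exceptional values of $C$ do not match either.} At $C=2$ the third normalized coordinate vanishes identically. The single remaining minor is $(Z_v-1)/Z_r-(Z_s-1)/Z_u=Z_v-Z_s$, which is what Proposition~\ref{MNEQNM}(3) actually derives, and it involves $Z_r,Z_u$. The printed condition (5) is equivalent to $(Z_s-Z_v)(Z_s-1)(Z_v-1)=0$. It is not necessary: pairs with $Z_rZ_s=Z_uZ_v=1$ always commute at $C=2$ but typically violate it. It is not sufficient either: for $Z_s=Z_v\neq1$ it holds, but the true condition forces $Z_r=Z_u$. So your predicted reduction of the remaining factor to the printed (5) cannot happen.

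At $C=-2$, i.e.\ $k\equiv 6,10\pmod{12}$, one of $G_{12},G_{21}$ vanishes and every $N(r,s)$ is triangular. Then $Z_rZ_s=Z_uZ_v=1$ gives $\det=1$ and trace $2$ for $A(r)A(s)$, hence commuting unipotent pairs outside the list.

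\textbf{Consequence.} No completion of your plan proves the statement verbatim. Carried out correctly, it proves a corrected version: for $C\neq\pm2$, add ``$Z_r=Z_s=1$ or $Z_u=Z_v=1$''; at $C=\pm2$, replace (5) by the single surviving $2\times2$ minor. This correction is harmless for Theorem~\ref{MAINTHEOREM}, which only uses the ``only if'' direction with $C\neq\pm2$ and $Z_s,Z_v\neq1$.
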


To prove Theorem \ref{THNNC}, we utilize the following general algebraic 
property concerning the commutativity of $2 \times 2$ matrices.

\begin{lemma}\label{LEMCOMM2X2}
  Let $A$ and $B$ be $2 \times 2$ matrices with entries $a_{ij}$ and 
  $b_{ij}$, respectively. Assuming $a_{21}b_{21} \neq 0$, the necessary 
  and sufficient condition for $AB = BA$ is given by
  \begin{equation*}
    \frac{a_{12}}{a_{21}}=\frac{b_{12}}{b_{21}} 
    \quad \text{and} \quad
    \frac{a_{11}-a_{22}}{a_{21}}=\frac{b_{11}-b_{22}}{b_{21}}.
  \end{equation*}
\end{lemma}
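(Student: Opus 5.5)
We prove Lemma \ref{LEMCOMM2X2} by writing out the commutator $AB-BA$ entrywise and using the nonvanishing of $a_{21}$ and $b_{21}$ to reduce the four scalar equations to the two stated ratio conditions. Expanding, the entries of $AB-BA$ are
\begin{align*}
(AB-BA)_{11} &= a_{12}b_{21}-b_{12}a_{21},\\
(AB-BA)_{22} &= a_{21}b_{12}-b_{21}a_{12} = -(AB-BA)_{11},\\
(AB-BA)_{12} &= a_{12}(b_{22}-b_{11}) - b_{12}(a_{22}-a_{11}) = b_{12}(a_{11}-a_{22}) - a_{12}(b_{11}-b_{22}),\\
(AB-BA)_{21} &= a_{21}(b_{11}-b_{22}) - b_{21}(a_{11}-a_{22}).
\end{align*}
The diagonal entries are negatives of each other, so there are at most three independent conditions.

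For the direction $AB=BA$ implies the ratio conditions, we start from the vanishing of the $(1,1)$-entry, which gives $a_{12}b_{21}=a_{21}b_{12}$. Dividing by $a_{21}b_{21}\neq 0$ yields $a_{12}/a_{21}=b_{12}/b_{21}$. The vanishing of the $(2,1)$-entry gives $a_{21}(b_{11}-b_{22})=b_{21}(a_{11}-a_{22})$. Dividing again by $a_{21}b_{21}$ gives the second ratio condition.

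For the converse, we set $\lambda=a_{12}/a_{21}=b_{12}/b_{21}$ and $\mu=(a_{11}-a_{22})/a_{21}=(b_{11}-b_{22})/b_{21}$. The $(1,1)$ and $(2,2)$ entries then vanish by the first relation, and the $(2,1)$ entry vanishes by the second. The one remaining step is the $(1,2)$ entry. Substituting $b_{12}=\lambda b_{21}$, $a_{12}=\lambda a_{21}$, $a_{11}-a_{22}=\mu a_{21}$ and $b_{11}-b_{22}=\mu b_{21}$ gives
\begin{equation*}
\lambda b_{21}\,\mu a_{21}-\lambda a_{21}\,\mu b_{21}=0.
\end{equation*}
Hence all entries of $AB-BA$ vanish.

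A conceptual way to see the same result is that, under the hypothesis, $A=a_{22}E+a_{21}P$ and $B=b_{22}E+b_{21}P$ with the common matrix $P=\begin{pmatrix}\mu&\lambda\\1&0\end{pmatrix}$. This viewpoint is also what will be exploited for $N(r,s)$ and $N(u,v)$ in Theorem \ref{THNNC}. There is no real obstacle here; the only care needed is the bookkeeping of signs in the $(1,2)$ entry, which is exactly the step where both ratio conditions are used simultaneously.
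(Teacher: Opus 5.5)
Your proof is correct and takes the same approach as the paper, which simply states that the lemma follows from a direct entrywise comparison of $AB$ and $BA$. You carry out that comparison in full, including the $(1,2)$ entry, where both ratio conditions are used together; the decomposition $A=a_{22}E+a_{21}P$ is a good conceptual complement but is not needed for the argument.
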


\begin{proof}
  This is easily verified by a direct comparison of the individual entries 
  of the matrix products $AB$ and $BA$.
\end{proof}

By applying this elementary lemma to our specific connection matrices, 
the proof of Theorem \ref{THNNC} is reduced to solving a system of linear 
equations, as detailed in the following proposition.

\begin{proposition}\label{MNEQNM}
  Assume $K_{r, s}K_{u, v}W_sW_v \neq 0$. Under this condition, 
  $N(r, s)N(u, v) = N(u, v)N(r, s)$ holds if and only if, for 
  $X = 1/Z_r$ and $Y = 1/Z_u$, the following matrix equation is satisfied:
  \begin{equation*}
    \begin{pmatrix}
      A_1(s) & -A_1(v)\\
      A_2(s) & -A_2(v)
    \end{pmatrix}
    \begin{pmatrix}
      X\\
      Y
    \end{pmatrix}
    = \begin{pmatrix}
      B_1(v)-B_1(s)\\
      B_2(v)-B_2(s)
    \end{pmatrix}.
  \end{equation*}
  Here, we have set
  \begin{align*}
    A_1(l) &= \frac{G_{12}}{G_{21}}\frac{(2-C)Z_l + C}{W_l},
    &A_2(l) &= \frac{1}{8i G_{21}}\frac{(4-C^2)Z_l + C^2}{W_l},\\
    B_1(l) &= \frac{G_{12}}{G_{21}}\frac{W_l-2}{W_l},
    &B_2(l) &= \frac{1}{8i G_{21}}\frac{(4+C)W_l-4}{W_l}.
  \end{align*}
  Furthermore, the solutions to this system of equations fall into one of 
  the following cases:
  \begin{enumerate}
    \item If $C \neq 2$ and $Z_s \neq Z_v$, then $X=Y=1$ (i.e., $Z_r=Z_u=1$).
    \item If $Z_s = Z_v$, then $X=Y$ (i.e., $Z_r=Z_u$).
    \item If $C = 2$, the equation degenerates to 
          $(Z_v-1)X - (Z_s-1)Y = Z_v - Z_s$.
  \end{enumerate}
\end{proposition}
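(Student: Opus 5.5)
The plan is to apply Lemma \ref{LEMCOMM2X2} to $N(r,s)$ and $N(u,v)$, after subtracting off the scalar parts. Commutativity is unaffected by adding multiples of $E$ or by rescaling. So $N(r,s)$ and $N(u,v)$ commute if and only if the traceless-relevant parts
\begin{equation*}
  P(r,s)=\begin{pmatrix}
    \frac{1}{2} + \frac{1}{8}\big(2(Z_s-1)+W_s\big)(2+W_r) & iG_{12}\big(2(Z_s-Z_r)+(Z_r-1)W_s\big)\\
    iG_{21}Z_rW_s & \frac{1}{4}Z_r(2-W_s)
  \end{pmatrix}
\end{equation*}
and $P(u,v)$ commute. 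Here we use $K_{r,s}K_{u,v}\neq 0$ to discard the factor $\frac12 K_{r,s}$. The $(2,1)$-entry $iG_{21}Z_rW_s$ is nonzero because $W_s\neq0$, $Z_r\neq 0$, and $G_{21}\neq 0$; the last holds in the range of $k$ under consideration, and I will note it as an implicit standing assumption. Hence Lemma \ref{LEMCOMM2X2} applies. Commutativity is then equivalent to two scalar equations: equality of $p_{12}/p_{21}$ and equality of $(p_{11}-p_{22})/p_{21}$ for the two matrices.

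Next I will compute these ratios and show that each is affine in $X=1/Z_r$. For the first ratio,
\begin{equation*}
  \frac{p_{12}}{p_{21}}=\frac{G_{12}}{G_{21}}\,\frac{2(Z_s-Z_r)+(Z_r-1)W_s}{Z_rW_s}
  =\frac{G_{12}}{G_{21}}\Big(\frac{2Z_s-W_s}{W_s}X+\frac{W_s-2}{W_s}\Big).
\end{equation*}
Writing $2Z_s-W_s=(2-C)Z_s+C$ gives exactly $A_1(s)X+B_1(s)$. For the second ratio, I expand $p_{11}-p_{22}$ using $W_r=(Z_r-1)C$, so that $2+W_r=CZ_r+(2-C)$. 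Dividing by $iG_{21}Z_rW_s$ should give a constant term plus a multiple of $X$. I will match these against $B_2(s)$ and $A_2(s)$, rewriting $W_s$ through $Z_s$ and $C$ where needed; the combination $(4-C^2)Z_s+C^2$ should appear from $(2(Z_s-1)+W_s)(2-C)$ together with the $\frac12$ and $\frac14\cdot 2$ terms. This bookkeeping is the main computational obstacle, but it is routine. Setting the ratios for $(r,s)$ and $(u,v)$ equal yields $A_j(s)X+B_j(s)=A_j(v)Y+B_j(v)$ for $j=1,2$, which is the stated linear system.

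For the classification of solutions, I note first that $X=Y=1$ is always a solution, since then the equations read $A_j(s)+B_j(s)=A_j(v)+B_j(v)$. Indeed, $A_1(l)+B_1(l)=\frac{G_{12}}{G_{21}}\cdot 2$, because $(2-C)Z_l+C+W_l-2=2(Z_l-1)-C(Z_l-1)+W_l=2(Z_l-1)+\dots$; I expect these to simplify to constants independent of $l$, and will check the analogous identity for $A_2+B_2$.

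If $Z_s=Z_v$, the system becomes $A_j(s)(X-Y)=0$ for $j=1,2$, so $X=Y$ unless both $A_j(s)$ vanish. The coefficients cannot both vanish: $(2-C)Z_s+C=0$ and $(4-C^2)Z_s+C^2=0$ together force $C(2+C)Z_s=C^2Z_s$, hence $C=0$, which is impossible since $C=\csc\theta$. If $Z_s\neq Z_v$, I will compute the determinant $-A_1(s)A_2(v)+A_1(v)A_2(s)$. Its numerator is proportional to
\begin{equation*}
  \big((2-C)Z_v+C\big)\big((4-C^2)Z_s+C^2\big)-\big((2-C)Z_s+C\big)\big((4-C^2)Z_v+C^2\big)=C(2-C)^2(Z_s-Z_v)\cdot(\text{const}),
\end{equation*}
which I will verify explicitly. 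This is nonzero when $C\neq2$, so the solution is unique and equals $(1,1)$. When $C=2$, the second row becomes proportional to the first, and the first equation, after multiplying by $W_sW_v/(2G_{12}/G_{21})$ and using $W_l=2(Z_l-1)$, reduces to $(Z_v-1)X-(Z_s-1)Y=Z_v-Z_s$.
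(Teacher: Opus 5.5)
Your proposal is correct and follows the paper's proof essentially step for step. You apply Lemma~\ref{LEMCOMM2X2} to the non-scalar parts to get $A_j(s)X+B_j(s)=A_j(v)Y+B_j(v)$, use the constancy of $A_j(l)+B_j(l)$ to see that $X=Y=1$ is always a solution, and settle the three cases via the determinant and the $C=2$ degeneration. For $Z_s=Z_v$, your argument that $A_1(s)$ and $A_2(s)$ cannot vanish simultaneously is the right one; the paper instead asserts $A_1(s)A_2(s)\neq 0$, which fails e.g.\ for $C=1$, $Z_s=-1$. Two arithmetic slips are harmless: $A_1(l)+B_1(l)=\frac{2}{C}\frac{G_{12}}{G_{21}}$ rather than $2\frac{G_{12}}{G_{21}}$, and the determinant numerator is exactly $2C(2-C)(Z_s-Z_v)$, with $(2-C)$ to the first power, which still vanishes only at $C=2$.
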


\begin{proof}
  We apply Lemma \ref{LEMCOMM2X2} by setting $A = N(r, s)$ and $B = N(u, v)$. 
  Substituting $W_l = (Z_l-1)C$ (for $l=s, v$) and rearranging the terms, 
  the conditions from the lemma translate precisely into the following 
  system:
  \begin{align*}
    A_1(s)X + B_1(s) &= A_1(v)Y + B_1(v),\\
    A_2(s)X + B_2(s) &= A_2(v)Y + B_2(v).
  \end{align*}
  The matrix equation in the proposition is a direct representation of 
  this system.

  Now, if we set $X = Y = 1$, a direct calculation yields
  \begin{equation*}
    A_1(s) + B_1(s) = \frac{2}{C}\frac{G_{12}}{G_{21}},\quad
    A_2(s) + B_2(s) = \frac{1}{2i}\frac{1+C}{G_{21}C}.
  \end{equation*}
  Since these expressions evaluate to constants independent of $s$, it 
  follows that $X=Y=1$ is always a solution to the system.
  Moreover, noting that $C = \csc\theta \neq 0$, the determinant of the 
  coefficient matrix of this system is given by
  \begin{equation*}
    \frac{G_{12} (C-2)}{4 i G_{21}^2 C} \frac{Z_v - Z_s}{(Z_s-1)(Z_v-1)}.
  \end{equation*}
  Therefore, it is clear that when $C \neq 2$ and $Z_s \neq Z_v$, the 
  determinant is non-zero, making $X=Y=1$ the unique solution.

  When $Z_s = Z_v$, we trivially have $A_1(s) = A_1(v)$ and 
  $B_1(s) = B_1(v)$, and similarly for $A_2$ and $B_2$. Since 
  $A_1(s)A_2(s) \neq 0$ for $C \neq 2$, the system simplifies to $X-Y=0$, 
  meaning $X=Y$ is the solution.

  Finally, when $C=2$, simple substitution shows that the determinant 
  vanishes, and the system degenerates into a single linear equation of a 
  line:
  \begin{equation*}
    \frac{1}{Z_s-1} X - \frac{1}{Z_v-1} Y = \frac{Z_v - Z_s}{(Z_s-1)(Z_v-1)}.
  \end{equation*}
\end{proof}

Proposition \ref{MNEQNM} operates under the assumption that 
$K_{r, s}K_{u, v}W_sW_v \neq 0$. If this assumption is violated, 
specifically when $W_l = 0$, we must handle the situation separately.

\begin{proposition}\label{SMNEQNM}
  Assume $K_{r, s}K_{u, v} \neq 0$ and $W_s = 0$ (which is equivalent to 
  $Z_s = 1$). Under these conditions, $N(r, s)N(u, v) = N(u, v)N(r, s)$ 
  holds if and only if $W_v = 0$ (i.e., $Z_v = 1$).
\end{proposition}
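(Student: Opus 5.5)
We plan to compute $N(r,s)$ directly from Theorem~\ref{THEXFN} when $W_s=0$, i.e.\ $Z_s=1$. In that case $2(Z_s-1)+W_s=0$ and $W_s=0$, so the matrix inside the formula becomes
\begin{equation*}
  \begin{pmatrix}
    \frac{1}{2} & 2iG_{12}(1-Z_r)\\
    0 & \frac{1}{2}Z_r
  \end{pmatrix}.
\end{equation*}
Hence $N(r,s)$ is upper triangular. Since the scalar part $-Z_rZ_sE$ commutes with everything, and $K_{r,s}\neq 0$, the condition $N(r,s)N(u,v)=N(u,v)N(r,s)$ is equivalent to the condition that $N(u,v)$ commutes with
\begin{equation*}
  P=\begin{pmatrix} 1 & 4iG_{12}(1-Z_r)\\ 0 & Z_r\end{pmatrix}.
\end{equation*}
Note that $P$ has exactly the shape of $\rho(T)^r$. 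We will treat this commutation directly rather than via Lemma~\ref{LEMCOMM2X2}, because that lemma requires nonzero $(2,1)$ entries, and $P$ has $(2,1)$ entry zero.

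For sufficiency, suppose $W_v=0$. The same computation makes $N(u,v)$ equal to a scalar plus a multiple of the analogous matrix $P'$ with $Z_u$ in place of $Z_r$. Both $P-E$ and $P'-E$ are multiples of the single matrix $\begin{pmatrix}0&4iG_{12}\\0&-1\end{pmatrix}$. Therefore $P$ and $P'$ lie in the commutative algebra generated by $E$ and this matrix, and so they commute.

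For necessity, write $Q=\frac{1}{2}K_{u,v}^{-1}\bigl(N(u,v)+Z_uZ_vE\bigr)$ as the matrix in Theorem~\ref{THEXFN}, so that $q_{21}=iG_{21}Z_uW_v$. We compare the $(2,1)$ entries of $PQ$ and $QP$. They are $Z_rq_{21}$ and $q_{21}$ respectively, so commutation forces $(Z_r-1)q_{21}=0$. If $Z_r\neq 1$, then $q_{21}=0$, and since $G_{21}Z_u\neq 0$ this gives $W_v=0$.

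The main obstacle is the subcase $Z_r=1$. There $P=E$, and the commutation condition gives no information, so a different argument is needed. In this subcase $Z_r=Z_s=1$ gives $W_r=W_s=0$, hence $K_{r,s}=8$ and $N(r,s)=-E+4\cdot\frac12 E=E$, which commutes with everything. The ``only if'' direction is then false as literally stated unless this case is excluded or absorbed. We would check whether the intended statement implicitly assumes $Z_r\neq1$; otherwise we would record that this case falls under condition (4) of Theorem~\ref{THNNC} only when $Z_u=1$. We would also note that $G_{21}\neq0$ must be verified from the Gamma expressions, using the standing assumption $k\not\equiv 0,4,5,11 \pmod{12}$, so that the step $q_{21}=0\Rightarrow W_v=0$ is valid.
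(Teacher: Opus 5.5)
Your main-case argument is correct, and in substance it is the paper's. The paper also uses that $N(r,s)$ is upper triangular when $Z_s=1$, and that $N(u,v)$ has $(2,1)$-entry $\frac12K_{u,v}\,iG_{21}Z_uW_v$. It then concludes with ``commuting matrices must share at least one eigenvector.'' That inference forces $(1,0)^T$ to be an eigenvector of $N(u,v)$ only when $N(r,s)$ has two distinct eigenvalues. Here they are $-Z_r+\frac14K_{r,s}$ and $-Z_r+\frac14K_{r,s}Z_r$, which are distinct exactly when $Z_r\neq1$. Your entrywise comparison, producing $(Z_r-1)q_{21}=0$, makes this hidden factor visible.

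Your diagnosis of the subcase $Z_r=1$ is right, and it is an error in the statement and in the paper's proof, not a weakness of your method. Since $\rho(T)^s=E$ and $\rho(S)^2=E$, one has $N(r,s)=\rho(T)^{2r}$, which is $E$ when $Z_r=1$. The case is not vacuous. Take $k=1/2$, so $\theta=\pi/4$, $C=\sqrt2$, $Z_l=i^l$. The pairs $(r,s)=(4,4)$ and $(u,v)=(3,3)$ give $K_{4,4}=8$, $K_{3,3}=-4i$, $W_4=0$, $W_3\neq0$, yet $N(4,4)=E$ commutes with $N(3,3)$.

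So the proposition needs the extra hypothesis $Z_r\neq1$ (equivalently, $N(r,s)$ non-scalar). Theorem~\ref{THNNC} likewise needs the extra alternative ``$Z_r=Z_s=1$ or $Z_u=Z_v=1$''. This does not damage the Main Theorem: there $rs\equiv uv\equiv2\pmod Q$ with $Q>2$, which forces $Z_s\neq1$ and $Z_v\neq1$. (Minor: your normalization should read $Q=2K_{u,v}^{-1}(N(u,v)+Z_uZ_vE)$; this is harmless.)

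One step of your plan would fail as written: $G_{21}\neq0$ does not follow from $k\not\equiv0,4,5,11\pmod{12}$. For integers $k\equiv6,10\pmod{12}$ with $k\ge6$, one of $\Gamma(\frac{6-k}{12})$, $\Gamma(\frac{10-k}{12})$ has a pole while $\Gamma(\frac{5-k}{6})$ is finite, so $G_{21}=0$. This is the situation of the remark $y_1=By_4$: for $k=6$, $E_4^{3/2}y_1=E_6$ is modular, so every monodromy matrix is upper triangular. Then $q_{21}\equiv0$ and your $(2,1)$-comparison is empty. The paper's assertion $iG_{21}Z_uW_v\neq0$ has the same gap.

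The conclusion survives with a different entry. For these $k$ one has $C=\csc\theta=-2$, hence $2(Z_v-1)+W_v=0$. This gives $q_{11}=\frac12$, $q_{22}=\frac12Z_uZ_v$ and $q_{12}=iG_{12}(4Z_v-2Z_uZ_v-2)$. Comparing $(1,2)$-entries then gives $(PQ-QP)_{12}=4iG_{12}(1-Z_r)(Z_v-1)$. Here $G_{12}\neq0$ because all Gamma arguments in $G_{12}$ are positive. So once again $Z_r\neq1$ forces $Z_v=1$.
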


\begin{proof}
  When $Z_s = 1$, the matrix $N(r, s)$ takes an upper triangular form:
  \begin{equation*}
    N(r, s) = \begin{pmatrix} -Z_r + \frac{1}{4} K_{r,s} & 
    i G_{12} K_{r,s} (1 - Z_r) \\[8pt] 
    0 & -Z_r + \frac{1}{4} K_{r,s} Z_r \end{pmatrix}.
  \end{equation*}
  This upper triangular structure implies that $N(r, s)$ possesses 
  $(1, 0)$ as an eigenvector.
  On the other hand, if $Z_v \neq 1$, the $(2, 1)$-entry of $N(u, v)$ is 
  $i G_{21}Z_u W_v \neq 0$. This means $N(u, v)$ is not upper triangular, 
  and thus $(1, 0)$ is not an eigenvector of $N(u, v)$. Since 
  commuting matrices must share at least one eigenvector, they cannot 
  commute unless $Z_v = 1$. 
  Furthermore, when $Z_s = Z_v = 1$, it is easily verified by direct 
  computation that $N(r, s)$ and $N(u, v)$ commute.
\end{proof}

With these comprehensive analyses in place, our main theorem follows 
naturally by synthesizing the results.

\begin{proof}[Proof of Theorem \ref{THNNC}]
  From Theorem \ref{THEXFN}, the condition $K_{r,s}K_{u, v}=0$ implies 
  that either $N(r, s)$ or $N(u, v)$ is a scalar multiple of the identity 
  matrix. In this trivial case, it is obvious that the matrices commute.
  For all non-trivial cases where $K_{r,s}K_{u, v} \neq 0$, the conditions 
  for commutativity are completely exhausted by the classifications in 
  Propositions \ref{MNEQNM} and \ref{SMNEQNM}.
\end{proof}

\section{Modularity of the Solutions to the KZ Equation}\label{SSMKZE}

In this section, we synthesize our algebraic results regarding 
the connection matrices to completely classify the weights $k$ for which 
the KZ equation admits a fundamental system of solutions consisting of 
modular forms for a principal congruence subgroup $\Gamma(N)$. We begin 
with a preliminary lemma evaluating the condition $K_t = 0$.

\begin{lemma}\label{PPMDG}
  Let $t \neq 1$ be a natural number. The following three conditions are 
  equivalent:
  \begin{enumerate}
    \item $K_t=0$,
    \item $\cos(2t\theta)-\cos(2\theta)=0$,
    \item $k = \frac{6c}{t\pm 1} - 1$, where $c$ is some integer.
  \end{enumerate}
\end{lemma}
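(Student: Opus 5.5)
Since $K_t = W_t^2 + 4Z_t$ is already given in Theorem \ref{THEXFM}, I will simplify this expression with half-angle factorizations and then solve a trigonometric equation. Write $Z_t = e^{2ti\theta}$. Then
\[
Z_t - 1 = e^{ti\theta}\bigl(e^{ti\theta}-e^{-ti\theta}\bigr) = 2i\,e^{ti\theta}\sin(t\theta),
\]
so $W_t = (Z_t-1)C = 2i\,e^{ti\theta}\sin(t\theta)/\sin\theta$. This requires $\sin\theta \neq 0$, which holds because $C=\csc\theta$ is defined. Hence
\[
W_t^2 = -4e^{2ti\theta}\frac{\sin^2(t\theta)}{\sin^2\theta},
\]
and therefore
\[
K_t = 4Z_t\Bigl(1-\frac{\sin^2(t\theta)}{\sin^2\theta}\Bigr) = \frac{4Z_t}{\sin^2\theta}\bigl(\sin^2\theta - \sin^2(t\theta)\bigr).
\]
Since $Z_t \neq 0$, the condition $K_t = 0$ is equivalent to $\sin^2(t\theta) = \sin^2\theta$. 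Using $\sin^2 x = (1-\cos 2x)/2$, this is exactly $\cos(2t\theta) - \cos(2\theta) = 0$. That gives (1)$\Leftrightarrow$(2).

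For (2)$\Leftrightarrow$(3), I will apply the sum-to-product identity:
\[
\cos(2t\theta) - \cos(2\theta) = -2\sin\bigl((t+1)\theta\bigr)\sin\bigl((t-1)\theta\bigr).
\]
This vanishes if and only if $(t+1)\theta \in \pi\mathbb{Z}$ or $(t-1)\theta \in \pi\mathbb{Z}$. Substituting $\theta = (1+k)\pi/6$, these become $(t\pm1)(1+k)/6 = c$ for some integer $c$. Because $t \neq 1$ is a natural number, we have $t\pm 1 \neq 0$, so we may divide to get $k = 6c/(t\pm1) - 1$. The converse direction follows by reading these steps backwards.

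I expect no real obstacle. The only points to handle carefully are the tacit hypotheses: that $\sin\theta \neq 0$, i.e.\ $k \not\equiv -1 \pmod 6$ (needed for $C$ to be defined, as in the paper's standing conventions), and that $t \neq 1$ so the division by $t-1$ is legitimate. It is also worth noting that when $t\pm1$ divides $6c$ these values of $k$ can be integers, which is consistent with the statement as written.
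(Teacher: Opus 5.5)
Your proof is correct and follows essentially the same route as the paper. The factorization $Z_t - 1 = 2i\,e^{it\theta}\sin(t\theta)$ reduces $K_t=0$ to $\sin^2(t\theta)=\sin^2\theta$, i.e.\ $\cos(2t\theta)=\cos(2\theta)$, and your sum-to-product form $-2\sin((t+1)\theta)\sin((t-1)\theta)$ is just a cleaner packaging of the paper's step $2t\theta = 2\pi c \pm 2\theta$. You also state explicitly the hypotheses $\sin\theta\neq 0$ and $t\neq 1$ (note that $K_1= 0$ identically), which the paper uses without comment.
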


\begin{proof}
  From Theorem \ref{THEXFM}, we have $K_t=W_t^2+4Z_t$. Recalling the 
  definitions $Z_t = e^{2ti\theta}$ and $C = \csc\theta = 1/\sin\theta$ 
  (with the implicit assumption that $\sin\theta \neq 0$), Euler's formula 
  yields
  \begin{equation*}
    Z_t - 1 = e^{i t\theta} \cdot 2i \sin(t\theta),\qquad
    W_t^2 = -4Z_t \frac{\sin^2(t\theta)}{\sin^2(\theta)}.
  \end{equation*}
  Substituting these into $K_t = W_t^2+4Z_t = 0$ and simplifying the 
  expression, we obtain the trigonometric equation 
  $\cos(2t\theta)-\cos(2\theta)=0$. Tracing these steps backward ensures 
  the equivalence of the first two conditions.

  Furthermore, the condition $\cos(2t\theta)=\cos(2\theta)$ holds if and 
  only if $2t\theta = 2\pi c \pm 2\theta$ for some integer $c$, due to the 
  periodicity and symmetry of the cosine function. Thus, $t\theta = 
  \pi c \pm \theta$. Substituting $\theta = \frac{1+k}{6}\pi$, we get
  \begin{equation*}
    \frac{1+k}{6}\pi = \theta = \frac{c}{t \pm 1}\pi.
  \end{equation*}
  Solving this linear equation for $k$ immediately yields 
  $k = \frac{6c}{t\pm 1} - 1$.
\end{proof}

Using this lemma, we can immediately deduce a strong necessary condition 
for the solutions to be modular forms.

\begin{theorem}\label{KE6CDNM1}
  Let $N = t+1 \ge 2$. If the solutions $E_4(\tau)^{k/4}y_1(\tau)$ and 
  $E_4(\tau)^{k/4}y_3(\tau)$ to the KZ equation are modular forms of weight 
  $k$ for $\Gamma(N)$, then $k$ must be of the form
  \begin{equation*}
    k = \frac{6c}{N} - 1
  \end{equation*}
  for some integer $c$.
\end{theorem}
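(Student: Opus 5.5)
The plan is to turn modularity of the two solutions into a diagonality condition on the connection matrix $M(t)$ of Theorem \ref{THEXFM}, and then read off $k$ from Lemma \ref{PPMDG}. Assume $f_1=E_4^{k/4}y_1$ and $f_2=E_4^{k/4}y_3$ are modular of weight $k$ for $\Gamma(N)$, possibly with different multiplier systems. For $g\in\Gamma(N)$, each $f_j$ then satisfies $f_j(g\tau)=\chi_j(g)(c\tau+d)^k f_j(\tau)$. Dividing one by the other cancels both the automorphy factor and the factor $E_4^{k/4}$. Hence the ratio $s(\tau)=iy_3(\tau)/y_1(\tau)$ obeys $s(g\tau)=\lambda(g)s(\tau)$. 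Equivalently, $\rho(g)$ must be diagonal in the Stiller basis, since $\rho$ acting on $(y_1,iy_3)$ is exactly the monodromy of the weight-$0$ hypergeometric equation of Theorem \ref{THKZHG}.

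Next I apply this to the words of Lemma \ref{TTS3EME}. For any $t\neq0$ with $t+1\equiv 0\pmod N$ we have $-(T^tS)^3\in\Gamma(N)$. Since $\rho(-E)=\rho(S^2)=E$, the matrix $M(t)$ must be diagonal. By Theorem \ref{THEXFM}, its $(2,1)$-entry is $-iK_tG_{21}Z_t$. Here $Z_t\neq0$, and $G_{21}\neq0$ as long as the denominator Gammas in $G_{21}$ are finite, that is, for $k\not\equiv 6,10\pmod{12}$. So $K_t=0$ for every such $t$. Lemma \ref{PPMDG} then gives $\theta\in \pi\mathbb{Z}/(t+1)\cup\pi\mathbb{Z}/(t-1)$. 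The lemma is stated for natural $t$, but its derivation through $\cos(2t\theta)=\cos(2\theta)$ works for any integer $t$.

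A single choice $t=N-1$ only gives $\theta\in\pi\mathbb{Z}/N\cup\pi\mathbb{Z}/(N-2)$, so I will also use $t=-N-1$, which gives $\theta\in\pi\mathbb{Z}/N\cup\pi\mathbb{Z}/(N+2)$. Since $k$ is rational, write $\theta=\pi p/q$ in lowest terms. The two conditions say that $q\mid N$, or else $q\mid N-2$ and $q\mid N+2$, in which case $q\mid4$. In the first case $k=6\theta/\pi-1$ already has the required form $6c/N-1$.

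The residual case $q\mid 4$, $q\nmid N$ would be handled with further admissible values $t=mN-1$, $m\in\mathbb{Z}$. This gives $q\mid mN$ or $q\mid mN-2$ for all $m$. For $q=4$ and $N$ odd, choosing $m$ with $mN\equiv1\pmod 4$ gives a contradiction. For $q\in\{2,4\}$ with $N\equiv2\pmod4$, I would check directly whether $\theta=\pi/2$ or $\pi p/4$ is compatible. This may require the $(1,2)$-entry $-iK_tG_{12}(Z_t-1)$, or showing that the relevant $k$ is itself of the form $6c/N-1$. Degenerate values of $\theta$ (where $\sin\theta=0$, or where Proposition \ref{HGDF33} fails) fall outside the setting in which the Stiller basis is defined.

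I expect the main obstacle to be the exceptional cases rather than the generic argument. These are $k\equiv6,10\pmod{12}$, where $G_{21}=0$ and $M(t)$ is automatically lower-triangular-trivial, so the $(1,2)$-entry must be used instead. They also include the small-denominator residue $q\mid4$. For $G_{21}=0$ I would first check whether $G_{12}(Z_t-1)K_t=0$ still forces $K_t=0$ or $Z_t=1$; both conditions put $\theta$ into $\pi\mathbb{Z}/N$-type sets after the same $t=\pm N-1$ trick.
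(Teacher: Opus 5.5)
Your reduction is the paper's: diagonality of $M(N-1)$ via Lemma \ref{TTS3EME} and Theorem \ref{THEXFM}, then Lemma \ref{PPMDG}. You are right that $K_{N-1}=0$ only gives $N\theta\in\pi\mathbb{Z}$ \emph{or} $(N-2)\theta\in\pi\mathbb{Z}$. The paper's proof keeps only the first alternative. For $N=2$ it also uses $t=1$, which Lemma \ref{PPMDG} excludes; in fact $K_1=0$ identically, since $(TS)^3=-E$. Your extra words $t=-N-1$ and $t=mN-1$ correctly reduce the problem to $q=4$, $N\equiv 2\pmod 4$. That case, however, cannot be settled by any word $(T^tS)^3$, so ``checking directly'' with entries of $M(t)$ will never give a contradiction. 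Take $\theta=p\pi/4$ with $p$ odd (i.e.\ $k\equiv 1/2\pmod 3$) and any odd $t$. Then $\cos(2t\theta)=\cos(tp\pi/2)=0=\cos(2\theta)$, so $K_t=0$ and $M(t)$ is scalar. When $N$ is even, every $t\equiv -1\pmod N$ is odd, so every relevant $M(t)$ is scalar. Yet $k=3p/2-1$ has the form $6c/N-1$ only if $4\mid pN$, which fails for $N\equiv 2\pmod 4$ (e.g.\ $N=2$, $k=1/2$). So the plan has a genuine gap that needs a different element of $\Gamma(N)$.

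The missing idea is the parabolic element $T^N\in\Gamma(N)$. From Proposition \ref{PPMVFSB},
\[
\rho(T)^N=\begin{pmatrix}1&4iG_{12}(1-Z_N)\\0&Z_N\end{pmatrix},
\]
so diagonality forces $G_{12}(1-Z_N)=0$. When $G_{12}\neq 0$ this means $Z_N=1$, i.e.\ $N(1+k)/6\in\mathbb{Z}$, which is exactly $k=6c/N-1$. This route needs no $M(t)$, no Lemma \ref{PPMDG}, and no case split. It remains to handle $G_{12}=0$, which happens only for negative integers $k\equiv 6,10\pmod{12}$. There $G_{21}\neq 0$ and $\rho(T)^N=\mathrm{diag}(1,Z_N)$. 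The element $ST^NS^{-1}\in\Gamma(N)$ then has matrix $\rho(S)\,\mathrm{diag}(1,Z_N)\,\rho(S)$, whose $(2,1)$-entry is $-4iG_{21}(Z_N-1)$, and this again forces $Z_N=1$. The same argument removes your worry about $G_{21}=0$, because then $G_{12}\neq 0$.
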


\begin{proof}
  Suppose $E_4(\tau)^{k/4}y_1(\tau)$ and $E_4(\tau)^{k/4}y_3(\tau)$ are 
  modular forms of weight $k$ for the congruence subgroup $\Gamma(N)$. 
  Then, the functions $y_1(\tau)$ and $y_3(\tau)$ must be modular functions 
  (weight 0) for $\Gamma(N)$. By Lemma \ref{TTS3EME}, the element 
  $L = (T^{N-1})^3S \equiv -E \pmod{N}$ belongs to $\Gamma(N)$. Therefore, 
  the representation matrix connecting $(y_1(L\tau), iy_3(L\tau))$ to 
  $(y_1(\tau), iy_3(\tau))$ must be a diagonal matrix, meaning all its 
  off-diagonal entries must vanish.

  By Theorem \ref{THEXFM}, this representation matrix is exactly $M(t)$ 
  with $t = N-1$. According to the explicit formula in Theorem \ref{THEXFM}, 
  the off-diagonal entries of $M(t)$ simultaneously vanish if and only if 
  $K_t = 0$. Consequently, by applying Lemma \ref{PPMDG}, we conclude 
  that $k = 6c/(t+1) - 1 = 6c/N - 1$ must hold for some integer $c$.
\end{proof}

\begin{remark}
  If the fundamental system of solutions for the KZ equation consists of 
  modular forms for $\Gamma(N)$ sharing the same multiplier system, then 
  $E_4(\tau)^{k/4}y_1(\tau)$ and $E_4(\tau)^{k/4}y_3(\tau)$ are also modular 
  forms for $\Gamma(N)$ with that same multiplier system. Therefore, 
  Theorem \ref{KE6CDNM1} implies that whenever $k \neq \frac{6c}{N}-1$, 
  the fundamental system of the KZ equation cannot be formed by modular 
  forms for $\Gamma(N)$ with a common multiplier system.

  Additionally, Theorem \ref{KE6CDNM1} indicates that 
  $E_4(\tau)^{k/4}y_1(\tau)$ and $E_4(\tau)^{k/4}y_3(\tau)$ can potentially 
  be modular forms for $\Gamma(2)$ only when $k = 3n-1$ ($n \in \mathbb{Z}$). 
  In this specific case, $M(t)$ reduces to a scalar multiple of the identity 
  matrix, which implies that any arbitrary solution to the KZ equation 
  shares the exact same transformation property with respect to $M(t)$. 
  Indeed, Kaneko and Koike \cite{KK} explicitly constructed a fundamental 
  system of modular forms for $\Gamma(2)$ sharing the same multiplier 
  system. Similar phenomena occur for $k=(3n+1)/2$ (yielding a basis of 
  modular forms for $\Gamma(4)$) and $k=(6n+1)/5$ (yielding a basis for 
  $\Gamma(5)$).
\end{remark}

To extend this necessary condition to a complete classification, we must 
analyze the behavior of the connection matrix $N(r,s)$.

\begin{lemma}\label{PPNDG}
  The following two conditions are equivalent:
  \begin{enumerate}
    \item $K_{r, s}=0$,
    \item $\cos(r\theta)\cos(s\theta) = \cos((r-s)\theta) \cos(2\theta)$.
  \end{enumerate}
\end{lemma}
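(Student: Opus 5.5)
The plan is to mirror the proof of Lemma \ref{PPMDG}: factor out a common nonzero exponential from $K_{r,s}$ and reduce the equation $K_{r,s}=0$ to a real trigonometric identity. As in Lemma \ref{PPMDG}, I will work under the implicit assumption $\sin\theta\neq 0$, so that $C=\csc\theta$ is defined.

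First I rewrite each ingredient of $K_{r,s}=W_rW_s+4(Z_r+Z_s)$ in half-angle form. Since $Z_l-1=e^{il\theta}\cdot 2i\sin(l\theta)$, we have
\begin{equation*}
W_l=\frac{2i\,e^{il\theta}\sin(l\theta)}{\sin\theta},
\end{equation*}
and therefore
\begin{equation*}
W_rW_s=-\frac{4e^{i(r+s)\theta}\sin(r\theta)\sin(s\theta)}{\sin^2\theta}.
\end{equation*}
Likewise I factor $e^{i(r+s)\theta}$ out of the sum:
\begin{equation*}
Z_r+Z_s=e^{2ir\theta}+e^{2is\theta}=e^{i(r+s)\theta}\cdot 2\cos((r-s)\theta).
\end{equation*}

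Combining these,
\begin{equation*}
K_{r,s}=\frac{4e^{i(r+s)\theta}}{\sin^2\theta}\Big(2\sin^2\theta\cos((r-s)\theta)-\sin(r\theta)\sin(s\theta)\Big).
\end{equation*}
The prefactor never vanishes. Hence $K_{r,s}=0$ is equivalent to
\begin{equation*}
\sin(r\theta)\sin(s\theta)=2\sin^2\theta\,\cos((r-s)\theta).
\end{equation*}

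Finally I apply two standard identities. The product-to-sum identity gives $\sin(r\theta)\sin(s\theta)=\cos((r-s)\theta)-\cos(r\theta)\cos(s\theta)$, and the double-angle formula gives $2\sin^2\theta=1-\cos(2\theta)$. Substituting both, the term $\cos((r-s)\theta)$ cancels from each side, leaving exactly $\cos(r\theta)\cos(s\theta)=\cos((r-s)\theta)\cos(2\theta)$.

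Every step is a reversible equivalence, since the only thing divided out is the nonzero factor $4e^{i(r+s)\theta}/\sin^2\theta$. This gives both directions at once. I see no real obstacle; the only point needing care is to factor out the common phase $e^{i(r+s)\theta}$, so that all remaining quantities are real and the comparison becomes a purely trigonometric identity.
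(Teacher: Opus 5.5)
Your proposal is correct and follows essentially the same route as the paper: factor $e^{i(r+s)\theta}$ out of $K_{r,s}$, reduce to $\sin(r\theta)\sin(s\theta)=2\sin^2\theta\cos((r-s)\theta)$, and finish with $2\sin^2\theta=1-\cos(2\theta)$. The only difference is cosmetic. You apply the cosine subtraction formula directly, whereas the paper passes through the intermediate identity $\cos((r-s)\theta)+\cos((r+s)\theta)=2\cos((r-s)\theta)\cos(2\theta)$, which it reuses in Proposition \ref{RSEQPM12}, before applying sum-to-product.
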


\begin{proof}
  From Theorem \ref{THEXFN}, we have $K_{r, s}=W_r W_s+4(Z_r+Z_s)$. By a 
  calculation similar to that in Lemma \ref{PPMDG}, we can rewrite this as
  \begin{equation*}
    K_{r, s} = W_r W_s+4(Z_r+Z_s)
    = -4 e^{i (r+s)\theta} \frac{\sin(r\theta)\sin(s\theta)}{\sin^2\theta}
      +8 e^{i (r+s)\theta} \cos((r-s)\theta).
  \end{equation*}
  Assuming $K_{r, s}=0$, and dividing by $-4 e^{i (r+s)\theta} \neq 0$, 
  we obtain
  \begin{equation*}
    2 \cos((r-s)\theta) \sin^2\theta = \sin(r\theta)\sin(s\theta).
  \end{equation*}
  Using the half-angle identity $2\sin^2\theta = 1-\cos(2\theta)$ and 
  applying the product-to-sum formula to the right-hand side, we get
  \begin{equation}
    \cos((r-s)\theta)+ \cos((r+s)\theta) = 2\cos((r-s)\theta)\cos(2\theta).
    \label{CRMSCRPSEQ2CRMSC2}
  \end{equation}
  Applying the sum-to-product formula to the left side of this equation 
  yields the desired trigonometric identity. Tracing the steps in reverse 
  establishes the equivalence.
\end{proof}

With this trigonometric equivalence established, we now explicitly solve 
the identity under a specific number-theoretic constraint.

\begin{proposition}\label{RSEQPM12}
  Let $k=p/q$ with $(p, q)=1$ and $q \ge 3$, and define 
  $Q = \frac{6q}{\gcd(p+q, 6)}$. Under the condition $rs \equiv 2 \pmod{Q}$, 
  the trigonometric identity
  \begin{equation*}
    \cos(r\theta)\cos(s\theta) = \cos((r-s)\theta) \cos(2\theta)
  \end{equation*}
  holds if and only if $(r, s) \equiv \pm(1, 2) \pmod{Q}$ or 
  $(r, s) \equiv \pm(2, 1) \pmod{Q}$.
\end{proposition}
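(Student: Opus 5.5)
Throughout, $\theta=\frac{(1+k)\pi}{6}=\frac{(p+q)\pi}{6q}$. Put $g=\gcd(p+q,6)$ and $m=(p+q)/g$. Then $\theta=m\pi/Q$ with $\gcd(m,Q)=1$: indeed $\gcd(p+q,q)=\gcd(p,q)=1$ and $\gcd(m,6/g)=1$. Hence $\zeta=e^{i\theta}$ is a primitive $2Q$-th root of unity, where $Q\ge q\ge 3$.

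First I would check that both sides of the identity depend only on $(r,s)\bmod Q$. Replacing $r$ by $r+Q$ changes $r\theta$ by $m\pi$. This multiplies both $\cos(r\theta)$ and $\cos((r-s)\theta)$ by $(-1)^m$, so the identity is invariant. This makes the hypothesis $rs\equiv 2\pmod Q$ and the conclusion meaningful.

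Next, exactly as in the proof of Lemma \ref{PPNDG}, I apply product-to-sum on both sides and rewrite the identity as
\begin{equation*}
\cos((r-s)\theta)+\cos((r+s)\theta)=\cos((r-s+2)\theta)+\cos((r-s-2)\theta).
\end{equation*}
In terms of $\zeta$, this is a signed vanishing sum of eight $2Q$-th roots of unity:
\begin{equation*}
\zeta^{\pm(r-s)}+\zeta^{\pm(r+s)}-\zeta^{\pm(r-s+2)}-\zeta^{\pm(r-s-2)}=0.
\end{equation*}
Sufficiency is immediate. For $(r,s)\equiv(2,1)$ the two sides are $\cos\theta+\cos3\theta$ and $\cos3\theta+\cos\theta$. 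The case $(1,2)$ is the same with $\cos(-\theta)$, and the sign-reversed pairs follow by evenness of cosine.

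For necessity I first treat the \emph{trivial} solutions, where the multiset $\{\pm(r-s),\pm(r+s)\}$ equals $\{\pm(r-s+2),\pm(r-s-2)\}$ modulo $2Q$. Here I use that multiplication by $m$ is invertible modulo $2Q$, so equality of angles modulo $2\pi$ reduces to congruences of integers modulo $2Q$.
\begin{itemize}
\item The match $r-s\equiv r-s\pm2$ is impossible, since $2Q>2$.
\item The match $r-s\equiv-(r-s\pm2)$ gives $r-s\equiv\mp1\pmod Q$.
\item The match $r+s\equiv\pm(r-s\pm2)$ gives $s\equiv\pm1$ or $r\equiv\pm1\pmod Q$.
\end{itemize}
Combining these with the remaining matches and with $rs\equiv2\pmod Q$ should leave exactly $(r,s)\equiv\pm(1,2),\pm(2,1)$. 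In particular $s\equiv\pm1$ forces $r\equiv\pm2$, and $r\equiv\pm1$ forces $s\equiv\pm2$. I would run this bookkeeping case by case, keeping track of the parity of $m$ when reducing from $2Q$ to $Q$.

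The main obstacle is excluding \emph{nontrivial} vanishing sums, that is, equalities $\cos a+\cos b=\cos c+\cos d$ with rational angles that are not multiset coincidences. My first approach is the Conway--Jones classification of vanishing sums of at most eight roots of unity. After cancelling any coinciding terms, a minimal nontrivial relation must involve a regular-polygon relation of order $3$ or $5$ (or the $5+3$ type combinations), possibly rotated. This forces each of the differences among $r-s$, $r+s$, $r-s\pm2$ (times $\theta$) to be multiples of $\pi/3$ or $\pi/5$, so $Q$ divides a small number such as $6$, $10$ or $30$. Those finitely many moduli, together with $Q\ge 3$ and $rs\equiv 2\pmod Q$, can then be checked by direct enumeration of $(r,s)\bmod Q$.

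As a fallback, I would instead apply the Galois automorphisms $\zeta\mapsto\zeta^a$ with $\gcd(a,2Q)=1$, which preserve the relation. Averaging over them, or comparing real parts at a conjugate where one cosine is close to $1$, should force coincidence of angles except for small $Q$.
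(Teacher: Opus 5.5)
\textbf{Overall assessment.} Your strategy is the paper's: rewrite the identity as a four-cosine relation, dispose of non-trivial relations with Conway--Jones plus a finite check, and solve the trivial pairings. The trivial-pairing bookkeeping is fine; each of the four admissible matchings gives one of $\pm(1,2),\pm(2,1)$, even without using $rs\equiv2$. The gap is in the step you flag as the main obstacle.

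\textbf{The reduction to small $Q$.} Your description of the classification is incomplete. Conway--Jones' Theorem 7 also contains heptagonal relations, e.g.\ $\cos\frac{\pi}{7}+\cos\frac{3\pi}{7}=\cos\frac{2\pi}{7}+\cos\frac{\pi}{3}$, which has exactly the shape $\cos A+\cos B=\cos C+\cos D$ you must rule out. So moduli divisible by $7$ have to be enumerated, and they do occur: $k=5/7,\,2/7,\,1/7,\,4/7$ give $Q=7,14,21,42$. Your list ``$6$, $10$ or $30$'' misses them.

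The inference ``the differences among $r-s$, $r+s$, $r-s\pm2$ are multiples of $\pi/3$ or $\pi/5$, so $Q$ is small'' also does not follow. Only angles lying in a rigid minimal relation have bounded denominators. A difference like $(r+s)\theta-(r-s)\theta=2s\theta$ says nothing about $Q$; what bounds $Q$ is a difference that is a \emph{fixed} multiple of $\theta$. You must show that, in the exceptional case, two of $(r-s)\theta$, $(r-s\pm2)\theta$ have bounded denominator, so that $2\theta$ or $4\theta$ does. This is why the paper passes to the triple-angle form: there the two angles $(r+s\pm1)\theta$ on one side differ by exactly $2\theta$, which yields $q\mid D$.

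\textbf{Cases outside your dichotomy.} Two cases fall through ``multiset coincidence versus Conway--Jones exceptional''.
\begin{enumerate}
\item The rotatable family $\cos\varphi=\cos(\pi/3-\varphi)+\cos(\pi/3+\varphi)$ has no bounded denominators at all. In your four-term relation it can only occur with the remaining cosine equal to $0$. It then forces either $\cos2\theta=\tfrac12$, which is impossible for $q\ge3$, or $3\theta\equiv\pm\pi/3\pmod\pi$, which gives $q=3$. You must treat it separately.
\item Same-side cancellations, such as $\cos A=-\cos B$ together with $\cos C=-\cos D$, or vanishing cosines. These are excluded because $rs\equiv2\pmod Q$ and $Q\neq4$ force $\cos r\theta\cos s\theta\neq0$, hence $\cos((r-s)\theta)\neq0$.
\end{enumerate}

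\textbf{Remaining points.} The finite enumeration must actually be carried out; the paper only asserts it, so this is no weaker than the source. Also, $e^{i\theta}$ need not be a primitive $2Q$-th root: for $k=1/3$ one has $\theta=2\pi/9$. Since the identity is invariant under $\theta\mapsto\theta+\pi$, you can replace $m$ by $m+Q$ to make $m$ odd. After that, your congruences modulo $2Q$ are legitimate.
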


\begin{proof}
  From equation \eqref{CRMSCRPSEQ2CRMSC2} in Lemma \ref{PPNDG}, we have
  \begin{equation*}
    \cos((r+s)\theta) = \cos((r-s)\theta) (2\cos(2\theta) - 1).
  \end{equation*}
  Since $q \ge 3$ and $(p, q)=1$, $\theta = \frac{p+q}{6q}\pi$ cannot be 
  an odd multiple of $\pi/2$, so $\cos\theta \neq 0$. By utilizing the 
  triple-angle identity $2\cos(2\theta) - 1 = \frac{\cos(3\theta)}{\cos(\theta)}$ 
  and subsequently applying the product-to-sum formulas to both sides, we 
  deduce the identity:
  \begin{equation}
    \cos((r+s+1)\theta) + \cos((r+s-1)\theta) 
    = \cos((r-s+3)\theta) + \cos((r-s-3)\theta).
    \label{CRPSP1PCRPSM1EQCRMSP3PCRMSM3}
  \end{equation}
  According to Theorem 7 by Conway and Jones \cite{CJ}, an identity of 
  this form concerning rational angles holds if and only if there exists a 
  trivial pairing among the angles, or it falls into a finite number of 
  specific exceptional cases.

  Suppose, for the sake of contradiction, that equation 
  \eqref{CRPSP1PCRPSM1EQCRMSP3PCRMSM3} corresponds to an exceptional case 
  in the Conway-Jones classification for some $q > 21$. Theorem 7 of 
  \cite{CJ} dictates that the denominator $D$ of the rational angles 
  involved must be one of $3, 5, 7, 15$, or $21$. This implies that we can 
  write $(r+s+1)\theta = n_1\pi/D$ and $(r+s-1)\theta = n_2\pi/D$ for some 
  integers $n_1, n_2$. Recalling that $\theta = \frac{p+q}{6q}\pi$, we have
  \begin{equation*}
    \frac{p+q}{3q}\pi = 2\theta 
    = (r+s+1)\theta - (r+s-1)\theta = \frac{n_1-n_2}{D}\pi.
  \end{equation*}
  This yields $D(p+q) = 3q(n_1-n_2)$, meaning $q$ must divide $D(p+q)$. 
  Since $(p,q)=1$, $q$ must be a divisor of $D$. However, this contradicts 
  the assumption $q > 21$, as the maximum possible value for $D$ is $21$.
  Furthermore, for the remaining small values $3 \le q \le 21$, a direct 
  exhaustive check of all pairs $r, s \pmod{q}$ satisfying $rs \equiv 2 \pmod{q}$ 
  confirms that equation \eqref{CRPSP1PCRPSM1EQCRMSP3PCRMSM3} never holds 
  as an exceptional case. Consequently, equation 
  \eqref{CRPSP1PCRPSM1EQCRMSP3PCRMSM3} is satisfied solely when a trivial 
  pairing among the angles exists.

  The condition for a trivial pairing, $\cos(X\theta) = \cos(Y\theta)$ for 
  integers $X, Y$, is $X\theta \equiv \pm Y\theta \pmod{2\pi}$. 
  Substituting $\theta = \frac{p+q}{6q}\pi$ and multiplying by $6q/\pi$, we get
  $X(p+q) \equiv \pm Y(p+q) \pmod{12q}$. Since $(p, q)=1$, $\gcd(p+q, q)=1$, 
  allowing us to divide by $p+q$ to obtain $X \equiv \pm Y \pmod{\mathcal{M}}$, 
  where $\mathcal{M} = 12q/\gcd(p+q, 12)$.
  Thus, equation \eqref{CRPSP1PCRPSM1EQCRMSP3PCRMSM3} requires the set equivalence:
  \begin{equation}
    \{\pm(r+s+1), \pm(r+s-1)\} \equiv \{\pm(r-s+3), \pm(r-s-3)\} \pmod{\mathcal{M}}.
    \label{SRPSPM1RMSPM3}
  \end{equation}

  Let $A=r+s+1, B=r+s-1$ and $C=r-s+3, D=r-s-3$, observing that $A-B=2$ and 
  $C-D=6$. While there are multiple pairing configurations (including signs), 
  a direct pairing like $A \equiv \pm C$ and $B \equiv \pm D$ leads to 
  $2 \equiv \pm 6$ or $2 \equiv \mp 6 \pmod{\mathcal{M}}$ by subtraction, 
  requiring $4 \equiv 0$ or $8 \equiv 0 \pmod{\mathcal{M}}$. One can verify 
  that these limited candidates for $q$ do not yield solutions to the 
  original trigonometric equation.
  Therefore, the only viable pairings are the "cross-pairings" with mixed 
  signs: $A \equiv \pm D$ and $B \equiv \pm C$. Specifically, we are 
  restricted to the following two systems (and their symmetric swaps):
  \begin{equation*}
    \begin{cases}
      r+s+1 \equiv -(r-s-3) \pmod{\mathcal{M}}\\
      r+s-1 \equiv r-s+3 \pmod{\mathcal{M}}
    \end{cases},\quad
    \begin{cases}
      r+s+1 \equiv r-s-3 \pmod{\mathcal{M}}\\
      r+s-1 \equiv -(r-s+3) \pmod{\mathcal{M}}
    \end{cases}.
  \end{equation*}
  Solving the first system yields $2r \equiv 2$ and $2s \equiv 4 \pmod{\mathcal{M}}$. 
  Dividing by 2 modifies the modulus to $\mathcal{M}/\gcd(2, \mathcal{M})$. Since
  \begin{equation*}
    \frac{\mathcal{M}}{\gcd(2,\mathcal{M})} 
    = \frac{12q}{\gcd(2\gcd(p+q, 12), 12q)} 
    = \frac{12q}{2\gcd(p+q, 6)} = \frac{6q}{\gcd(p+q, 6)} = Q,
  \end{equation*}
  we obtain $(r, s) \equiv (1, 2) \pmod{Q}$. A similar evaluation of the 
  second system yields $(r, s) \equiv (-1, -2) \pmod{Q}$. The sufficiency 
  is easily verified by reversing the steps.
\end{proof}

\begin{remark}
  Under the assumption of Proposition \ref{RSEQPM12}, the equation 
  $\cos((r+s)\theta) = \cos((r-s)\theta) (2\cos(2\theta) - 1)$ can be 
  further simplified into the elegant identity 
  $\tan(r\theta)\tan(s\theta) = \tan(2\theta)\tan\theta$.
\end{remark}

Proposition \ref{RSEQPM12} rigorously restricts the solutions of the trigonometric 
identity to a few trivial congruence classes. To exploit this restriction and 
ultimately construct non-commuting matrices, we must demonstrate that there 
always exist parameters that strategically avoid these trivial classes while 
satisfying other necessary technical conditions. The following proposition 
guarantees the existence of such parameters modulo $Q$.

\begin{proposition}\label{QGT6C1234}
  Let $k=p/q$ with $(p, q)=1, q \ge 6$, let $Q=6q/\gcd(p+q, 6)$, and let 
  $Z_l = e^{2\pi i l(1+k)/6}$. Under these assumptions, there exist integer 
  pairs $(r, s)$ and $(u, v)$ satisfying $rs \equiv 2 \pmod{Q}$ and 
  $uv \equiv 2 \pmod{Q}$ that simultaneously fulfill all of the following 
  five conditions:
  \begin{enumerate}
    \item $(r, s) \not\equiv \pm(1, 2) \pmod Q$ and $(r, s) \not\equiv \pm(2, 1) \pmod Q$,
    \item $(r, s) \not\equiv (u, v) \pmod Q$,
    \item Exactly one of $r, s$ is odd, and exactly one of $u, v$ is odd,
    \item $Z_s \neq Z_v$ or $Z_r \neq Z_u$,
    \item $Z_s \neq 1$ or $Z_v \neq 1$.
  \end{enumerate}
\end{proposition}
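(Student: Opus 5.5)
The plan is to parametrize the solutions of $rs\equiv 2 \pmod Q$ by units. First, record two facts about $Z_l$. Writing $1+k=(p+q)/q$, we have $Z_l=e^{2\pi i l(p+q)/(6q)}$. Since $(p,q)=1$ gives $\gcd(p+q,q)=1$, we get $\gcd(p+q,6q)=\gcd(p+q,6)$. Hence $Z_l=1$ if and only if $Q\mid l$, and more generally $Z_l=Z_{l'}$ if and only if $l\equiv l' \pmod Q$. It follows that condition (4) is equivalent to condition (2). Condition (5) holds automatically: $Z_s=1$ would force $Q\mid s$, so $rs\equiv 0\not\equiv 2 \pmod Q$ because $Q\ge 6$. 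Indeed $Q\ge q\ge 6$, since $Q=6q/g$ with $g=\gcd(p+q,6)\le 6$. So only conditions (1), (2) and (3) need to be arranged.

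Next, take the family $r=a$, $s\equiv 2a^{-1} \pmod Q$, where $a$ is a unit modulo $Q$. Every such pair satisfies $rs\equiv 2$. The pair is $\equiv(1,2)$ exactly when $a\equiv 1$, and $\equiv -(1,2)$ exactly when $a\equiv -1$. It is $\equiv\pm(2,1)$ only if $a\equiv \pm 2$, which is possible only when $2$ is a unit, i.e. $Q$ odd. Two units $a\not\equiv b$ give pairs with $r\not\equiv u$, hence condition (2).

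For parity (condition (3)) I would split on the parity of $Q$:
\begin{itemize}
\item If $Q$ is even, the unit $a$ is odd, and any integer representative of $2a^{-1}$ is even, since $Q$ is even. So exactly one of $r,s$ is odd.
\item If $Q$ is odd, replacing $s$ by $s+Q$ flips its parity without changing its class mod $Q$. So we can always arrange exactly one of $r,s$ odd, and likewise for $(u,v)$.
\end{itemize}

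Thus it remains to find two distinct units $a,b$ avoiding the excluded classes:
\begin{itemize}
\item For $Q$ even, we must avoid $\{\pm1\}$, which needs $\varphi(Q)\ge 4$.
\item For $Q$ odd, we must avoid $\{\pm1,\pm2\}$, which needs $\varphi(Q)\ge 6$.
\end{itemize}
The main obstacle is the small-$Q$ check. For odd $Q\ge 7$ one verifies $\varphi(Q)\ge 6$ ($\varphi(7)=\varphi(9)=6$, $\varphi(11)=10$, and so on). For even $Q\ge 8$ one has $\varphi(Q)\ge 4$, with equality at $Q=8,10,12$; for $Q=12$, for instance, the units $5,7$ work. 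The one genuinely bad value is $Q=6$, where $\varphi(6)=2$, and I would rule it out directly. Since $g$ is coprime to $q$ and $q\ge 6$, we get $Q=6q/g\ge q\ge 6$. Equality $Q=6$ would force $q=6$ and $g=6$, i.e. $6\mid p+6$, so $6\mid p$, contradicting $(p,q)=1$. Hence $Q\ge 7$, two admissible units always exist, and the five conditions hold.
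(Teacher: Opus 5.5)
Your proof is correct and follows essentially the same route as the paper: you reduce condition (4) to condition (2), note that condition (5) is automatic, compare the number of admissible solutions of $rs\equiv 2 \pmod{Q}$ with the at most four excluded classes, and rule out $Q=6$ exactly as the paper does. The one difference is that you restrict to pairs with $r$ a unit. This makes parity automatic for even $Q$ and removes $\pm(2,1)$ from consideration, so $\varphi(Q)\ge 4$ (even $Q$) or $\varphi(Q)\ge 6$ (odd $Q$) suffices, whereas the paper counts all mixed-parity solutions via the Chinese Remainder Theorem in three cases ($Q\equiv 0 \pmod 4$, $Q\equiv 2 \pmod 4$, $Q$ odd).
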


\begin{proof}
  Since $Z_l = e^{2\pi il(p+q)/6q}$, the equality $Z_s=Z_v$ holds if and 
  only if 
  \begin{equation*}
    (s - v) \frac{p+q}{6q} \in \mathbb{Z} 
    \iff (s - v)(p+q) \equiv 0 \pmod{6q}.
  \end{equation*}
  Because $(p, q)=1$ implies $(p+q, q)=1$, we have 
  $\gcd(p+q, 6q) = \gcd(p+q, 6)$. Let $D = \gcd(p+q, 6)$. Dividing the 
  congruence by $D$ and noting that $Q = 6q/D$, we find $s - v \equiv 0 \pmod{Q}$. 
  Thus, condition 4 is logically equivalent to condition 2.

  Similarly, $Z_s = 1$ is equivalent to $s \equiv 0 \pmod{Q}$. If 
  $s \equiv 0 \pmod{Q}$, the relation $rs \equiv 2 \pmod{Q}$ forces 
  $0 \equiv 2 \pmod{Q}$, implying $Q$ must divide 2. However, $Q$ is a 
  multiple of $q$, and $Q \ge q \ge 6$, which creates a contradiction. 
  Therefore, $Z_s \neq 1$ always holds when $q \ge 6$, automatically 
  satisfying condition 5. Thus, the proof reduces to finding pairs 
  $(r, s)$ and $(u, v)$ satisfying $rs \equiv 2 \pmod{Q}$ that meet 
  conditions 1, 2, and 3.

  We estimate the number of solutions to $xy \equiv 2 \pmod{Q}$ in the 
  residue ring modulo $Q$ that satisfy condition 3 (i.e., one element is 
  odd, the other is even). We analyze this by the parity of $Q$:

  (i) $Q \equiv 0 \pmod{4}$: Write $Q = 2^k m$ with $k \ge 2$ and $m$ odd. 
  Any solution to $xy \equiv 2 \pmod{Q}$ must satisfy $xy \equiv 2 \pmod{4}$, 
  which inherently forces exactly one of $x, y$ to be odd. By the Chinese 
  Remainder Theorem, the total number of solutions is $N(Q) = 2^k \phi(m)$. 
  For $Q \ge 6$, the minimal cases are $Q=8$ yielding $N(8)=8$, and $Q=12$ 
  yielding $N(12)=8$. Thus, there are always at least 8 valid solutions.

  (ii) $Q \equiv 2 \pmod{4}$: Write $Q = 2m$ with $m \ge 3$ odd. The number 
  of solutions with mixed parity modulo $2m$ is the product of mixed parity 
  solutions modulo 2 (which is 2, namely $(0,1), (1,0)$) and the total 
  solutions modulo $m$ (which is $\phi(m)$). So we have $2\phi(m)$ solutions. 
  We need $2\phi(m) \ge 6$, which fails only if $\phi(m) \le 2$, meaning 
  $m = 1, 3$. $m=1$ gives $Q=2$, violating $Q \ge 6$. If $m=3$, then $Q=6$. 
  Since $Q = 6q/\gcd(p+q, 6) = 6$, we must have $q = \gcd(p+q, 6)$. As $q \ge 6$ 
  and the GCD is at most 6, $q=6$ is forced. But if $q=6$, $(p, 6)=1$ implies 
  $p$ is an odd number not divisible by 3. Thus, $\gcd(p+q, 6) = \gcd(p+6, 6) 
  = \gcd(p, 6) = 1$. This would mean $Q = 6 \times 6 / 1 = 36 \neq 6$, a 
  contradiction. Therefore, $Q=6$ is impossible, ensuring $m \ge 5$ and 
  $\phi(m) \ge 4$. Thus, there are at least $2\phi(m) \ge 8$ solutions 
  satisfying condition 3.

  (iii) $Q$ is odd: Then $Q \ge 7$, and there are $\phi(Q) \ge 6$ total 
  solutions modulo $Q$. When lifting any solution $(r_0, s_0) \pmod{Q}$ to 
  integers, we can choose $r \in \{r_0, r_0+Q\}$ and $s \in \{s_0, s_0+Q\}$. 
  Since $Q$ is odd, adding $Q$ toggles the parity. Thus, we can always 
  select representatives such that exactly one is odd, while preserving the 
  congruence class modulo $Q$. This guarantees at least $\phi(Q) \ge 6$ 
  solutions fulfilling condition 3.

  In all cases, there are at least 6 pairs satisfying condition 3 modulo $Q$. 
  The set of pairs to be excluded by condition 1, 
  $E = \{\pm(1, 2), \pm(2, 1)\} \pmod{Q}$, has size $|E| \le 4$. Therefore, 
  there are at least $6 - 4 = 2$ distinct valid pairs remaining. Choosing 
  two such distinct pairs as $(r, s)$ and $(u, v)$ automatically satisfies 
  condition 2 ($(r, s) \not\equiv (u, v) \pmod{Q}$). Consequently, appropriate 
  pairs $(r, s)$ and $(u, v)$ always exist for any $q \ge 6$.
\end{proof}

While Proposition \ref{QGT6C1234} successfully identifies the necessary parameters 
within the residue ring modulo $Q$, our ultimate goal is to construct specific 
modular words belonging to the principal congruence subgroup $\Gamma(N)$. The next 
proposition bridges this gap by demonstrating that we can smoothly lift these 
modular parameters to actual integers fulfilling the required divisibility 
condition for $\Gamma(N)$.

\begin{proposition}\label{NDMRS2F}
  Let $k=p/q$ with $(p, q)=1, q \ge 6$, let $Q=6q/\gcd(p+q, 6)$.
  Assume we are given integers $r, s$ such that $rs \equiv 2 \pmod{Q}$ and 
  exactly one of them is odd. Then, for any multiple $N$ of $Q$, we can 
  choose integers $x, y$ satisfying $x \equiv r \pmod{Q}$ and 
  $y \equiv s \pmod{Q}$ such that the value $M = xy - 2$ is a multiple of 
  $N$, and furthermore, $M > N$.
\end{proposition}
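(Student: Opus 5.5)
The plan is to build $x,y$ modulo $N$ prime by prime with the Chinese Remainder Theorem, then move to large positive representatives to get $M>N$. Write $N=\prod_{p}p^{e_p}$. Since $Q\mid N$, a residue class modulo $N$ determines one modulo $Q$. So it suffices to prescribe $x,y \bmod p^{e_p}$ for each prime $p\mid N$ so that three things hold:
\begin{itemize}
\item $xy\equiv 2 \pmod{p^{e_p}}$;
\item $x\equiv r$ and $y\equiv s \pmod{p^{v_p(Q)}}$;
\item the pieces glue by CRT to a class modulo $N$ with $x\equiv r$, $y\equiv s \pmod Q$ and $xy\equiv 2\pmod N$.
\end{itemize}

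\emph{Primes $p\mid N$ with $p\nmid Q$.} There is no constraint coming from $Q$, so I set $x\equiv 1$ and $y\equiv 2 \pmod{p^{e_p}}$.

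\emph{Odd primes $p\mid Q$.} From $rs\equiv 2\pmod Q$ we get $rs\equiv 2 \pmod p$ with $p$ odd, so $p\nmid r$. Hence $r$ is a unit modulo $p^{e_p}$. I set $x\equiv r$ and $y\equiv 2r^{-1}\pmod{p^{e_p}}$. Reducing modulo $p^{v_p(Q)}$ gives $y\equiv 2r^{-1}\equiv s$, because $rs\equiv 2$ there and $r$ is invertible.

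\emph{The prime $2$ when $2\mid Q$.} Here the hypothesis that exactly one of $r,s$ is odd is used. By the symmetry of the statement, assume $r$ is odd. Then $r$ is a unit modulo $2^{e_2}$, and the same recipe works: $x\equiv r$ and $y\equiv 2r^{-1} \pmod{2^{e_2}}$, which agrees with $s$ modulo $2^{v_2(Q)}$. (If $s$ is the odd one, swap the roles of $x$ and $y$.) If $Q$ is odd but $N$ is even, the prime $2$ falls under the first case.

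Gluing all these local choices by CRT gives residues $x_0,y_0 \bmod N$. They satisfy $x_0\equiv r$, $y_0\equiv s\pmod Q$ and $x_0y_0\equiv 2\pmod N$, so $N\mid x_0y_0-2$.

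\emph{Making $M>N$.} Replace $x_0,y_0$ by representatives $x=x_0+aN$ and $y=y_0+bN$ with $a,b$ large enough that $x,y\ge N+2$. Adding multiples of $N$ keeps every congruence above modulo $N$, hence modulo $Q$. Then $M=xy-2\ge (N+2)^2-2>N$, and $N\mid M$.

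The only real obstacle is the compatibility at primes dividing $Q$ whose exponent in $N$ exceeds that in $Q$. The point is that $rs\equiv 2$ cannot be lifted naively there. The fix is to observe that one factor is a unit at that prime: at odd primes because $2$ is a unit there, and at $2$ because exactly one of $r,s$ is odd. That unit factor can then be kept fixed while the other factor is solved for.
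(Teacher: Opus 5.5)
Your proof is correct and is essentially the paper's argument written prime by prime: the paper takes $r$ odd, notes $\gcd(r,Q)=1$, picks $x=r+aQ$ coprime to $N$ (your choice $x\equiv 1$ at primes $p\nmid Q$), and sets $y\equiv 2x^{-1}\pmod N$, so that $y\equiv 2r^{-1}\equiv s\pmod Q$. Making both representatives large and positive to get $M>N$ is an equally valid substitute for the paper's choice of $y=y_0+bN$ with $b$ of the same sign as $x$.
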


\begin{proof}
  Without loss of generality, assume $r$ is odd. The condition 
  $rs \equiv 2 \pmod{Q}$ implies $rs - cQ = 2$ for some integer $c$. 
  If $r$ and $Q$ shared a common prime factor $p$, then $p$ must divide 2, 
  forcing $p=2$. But since $r$ is odd, it cannot be divisible by 2, leading 
  to a contradiction. Thus, $\gcd(r, Q) = 1$.

  We first construct an integer $x \equiv r \pmod{Q}$ such that 
  $\gcd(x, N) = 1$. Let $x = r + aQ$ for an arbitrary integer $a$. We 
  determine $a$ by considering the prime factors $p$ of $N$:

  (i) If $p$ divides $Q$: Then $x \equiv r \pmod{p}$. Since $\gcd(r, Q) = 1$, 
  $p$ does not divide $r$, ensuring $x \not\equiv 0 \pmod{p}$.
  
  (ii) If $p$ does not divide $Q$: Then $x \equiv r + aQ \pmod{p}$. Since 
  $Q$ is invertible modulo $p$, we can simply choose $a$ such that 
  $a \not\equiv -rQ^{-1} \pmod{p}$, which guarantees $x \not\equiv 0 \pmod{p}$.

  By the Chinese Remainder Theorem, there exists an integer $a$ satisfying 
  the conditions of (ii) simultaneously for all such prime factors of $N$. 
  Fixing such an $a$ guarantees $\gcd(x, N) = 1$.

  Because $x$ is coprime to $N$, its inverse $x^{-1}$ modulo $N$ exists. 
  We define $y$ modulo $N$ as
  \begin{equation*}
    y \equiv 2x^{-1} \pmod{N}.
  \end{equation*}
  This definition immediately yields $xy \equiv 2 \pmod{N}$, meaning 
  $M = xy - 2$ is a multiple of $N$.
  
  Next, we verify that $y \equiv s \pmod{Q}$. Since $N$ is a multiple of 
  $Q$, the congruence modulo $N$ reduces to modulo $Q$, giving 
  $y \equiv 2x^{-1} \pmod{Q}$. As $x = r + aQ \equiv r \pmod{Q}$, we have 
  $x^{-1} \equiv r^{-1} \pmod{Q}$. Combined with $s \equiv 2r^{-1} \pmod{Q}$ 
  (derived from $rs \equiv 2 \pmod{Q}$), we obtain
  \begin{equation*}
    y \equiv 2r^{-1} \equiv s \pmod{Q},
  \end{equation*}
  satisfying the required congruence.

  Finally, we must ensure $M > N$. The integer $y$ defined above can be 
  written as $y = y_0 + bN$ for some fixed $y_0$ and arbitrary integer $b$. 
  Since $x$ is now a fixed non-zero constant, we can select $b$ to have 
  the same sign as $x$ and a sufficiently large absolute value. This allows 
  the product $xy$ to become arbitrarily large and positive. Therefore, it 
  is always possible to choose $b$ (and thus $y$) such that $M = xy - 2 > N$.
\end{proof}

With all the necessary algebraic and number-theoretic machinery in place---specifically, 
the existence of non-commuting connection matrices (Proposition \ref{QGT6C1234}) and 
the ability to lift their parameters to the appropriate principal congruence subgroups 
(Proposition \ref{NDMRS2F})---we are now ready to state and prove the main theorem 
of this paper.

\begin{theorem}[Main Theorem]\label{MAINTHEOREM}
  Let $N$ be a natural number and $k$ be any rational number. If a 
  fundamental system of solutions to the KZ equation of weight $k$ is 
  formed by modular forms for the principal congruence subgroup $\Gamma(N)$, 
  then $k$ must belong to the following restricted set:
  \begin{equation*}
    k \equiv 1/2, 7/2, 1, 2, 3 \pmod{6} \quad \text{or} \quad 
    k = \frac{6n+1}{5} \;\; (n \in \mathbb{Z}).
  \end{equation*}
\end{theorem}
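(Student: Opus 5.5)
The argument splits according to the denominator of $k=p/q$ in lowest terms: large denominators are excluded by the commutativity results, and small ones by a finite check against the $M(t)$ and $N(r,s)$ constraints.

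\textbf{Setup.} Suppose the KZ equation of weight $k$ has a fundamental system of modular forms for $\Gamma(N)$, possibly with different multipliers. Dividing by $E_4^{k/4}$ gives weight-$0$ solutions. Each of the two modular solutions is then an eigenvector of the monodromy of every $\gamma\in\Gamma(N)$, and these two eigenvectors are linearly independent. So $\rho(\Gamma(N))$ is simultaneously diagonalizable, and in particular any two matrices $\rho(\gamma),\rho(\gamma')$ with $\gamma,\gamma'\in\Gamma(N)$ commute. The signs are harmless: $\rho(-E)$ is a scalar (by the remark after Proposition \ref{PPMVFSB}), so $\pm(T^rST^sS)^2$ give commuting data iff $N(r,s)$ do.

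\textbf{Case $q\ge 6$.} Replace $N$ by a common multiple of $N$ and $Q=6q/\gcd(p+q,6)$; since $\Gamma(\operatorname{lcm})\subset\Gamma(N)$, the commutativity still holds. Choose $(r,s)$ and $(u,v)$ as in Proposition \ref{QGT6C1234}. Lift each pair by Proposition \ref{NDMRS2F} to integers with $xy-2$ a multiple of $N$ and $xy-2>N$, so that $x,y\neq 1$. By Lemma \ref{TRTS2EQMEMODRSM2}, both $-(T^xST^yS)^2$ and $-(T^{x'}ST^{y'}S)^2$ lie in $\Gamma(N)$, hence $N(x,y)$ and $N(x',y')$ commute. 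The $Z_l$ depend only on $l \bmod Q$, so Theorem \ref{THNNC} applies with the residues.
\begin{enumerate}
\item Condition (1) of Theorem \ref{THNNC} fails: by Lemma \ref{PPNDG} and Proposition \ref{RSEQPM12}, $K=0$ would force the excluded classes $\pm(1,2),\pm(2,1)$.
\item Conditions (2) and (3) fail by conditions 4 and 5 of Proposition \ref{QGT6C1234}.
\item Condition (4) fails, since $Z_r=1$ is impossible when $rs\equiv 2\pmod Q$ with $Q\ge6$.
\item Condition (5) needs $C=2$, i.e.\ $\sin\theta=1/2$, so $\theta\equiv\pi/6$ or $5\pi/6 \pmod{2\pi}$. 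This gives $k\equiv 0,4 \pmod{12}$, which is an integer and contradicts $q\ge6$.
\end{enumerate}
This gives a contradiction, so $q\le 5$.

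\textbf{Case $q\le5$.} Here I would use Theorem \ref{KE6CDNM1} together with direct inspection. For integer $k$, and assuming $k\not\equiv 0,4,5,11\pmod{12}$ so that the Stiller basis is defined, we need $k\equiv 1,2,3 \pmod 6$. This means excluding $k\equiv 0,4,5 \pmod 6$.
\begin{itemize}
\item The values $k\equiv 0,4,5,11 \pmod{12}$ are excluded from the Stiller basis and must be handled separately. Either use the $y_1,y_2$ basis and the fact that one solution is logarithmic (the exponent difference $1-c$ or $c-a-b$ becomes an integer, producing a unipotent $\rho(T)$ or $\rho(S)$ type obstruction), or apply the commutator criterion with the appropriate triangular basis.
\item For the remaining values of $k \bmod 6$ in this case ($k\equiv 0,4,5 \pmod 6$ but outside $k\equiv 0,4,5,11\pmod{12}$, i.e.\ $k\equiv 6,10,17\pmod{12}$), exhibit explicit pairs $(r,s),(u,v)$ with $rs\equiv uv\equiv2 \pmod{Q}$ and lift them as above to non-commuting $N$'s.
\end{itemize}
For $q=2,3,4,5$ the procedure is the same. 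List the residues $p \bmod 6q$ and keep only $k\equiv 1/2,7/2 \pmod 6$ and $k=(6n+1)/5$. For every other residue, produce either a non-scalar non-diagonalizable $M(t)$-type obstruction or an explicit non-commuting pair $N(r,s),N(u,v)$, checking that $K\ne0$ via Lemma \ref{PPNDG} and that conditions (2)–(5) of Theorem \ref{THNNC} fail. Since $\theta$ is a rational multiple of $\pi$ with denominator at most $30$, this is a finite computation. The need for lifting only requires $(r,s)$ with exactly one odd entry, which Proposition \ref{NDMRS2F} then handles.

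\textbf{Main obstacle.} The hard part is this finite small-$q$ analysis. It has to go through many residue classes, including the degenerate cases $C=2$, $Z_s=1$ and the resonant integer weights where Proposition \ref{HGDF33} fails, and it must avoid mistakenly excluding a genuine Kaneko–Koike weight such as $k=(6n+1)/5$ or the half-integers $\equiv 1/2,7/2$. I would organize it as a table over $p \bmod 6q$.
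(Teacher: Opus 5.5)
For non-integral $k$ your argument is the paper's. You use simultaneous diagonalizability (hence commutativity) of $\rho(\Gamma(N))$, handle $q\ge 6$ via Propositions~\ref{QGT6C1234} and \ref{NDMRS2F} and Theorem~\ref{THNNC}, and do a finite residue check for $2\le q\le 5$. You are in fact more careful than the paper about the sign $-E$ and about ruling out each alternative of Theorem~\ref{THNNC}. The genuine gap is in the integral weights. The paper does not treat these by monodromy at all; it imports them from Kaneko--Koike and Guerzhoy. Your substitute fails for $k\equiv 0,4\pmod 6$:
\begin{itemize}
\item \emph{$k\equiv 0,4\pmod{12}$.} There is no logarithm anywhere. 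The exponent differences are $(k+1)/6\in 2\mathbb{Z}+\{1/6,5/6\}$ at $x=0$, $1/2$ at $x=1$ and $1/3$ at $x=\infty$, so $\rho(T)$ and $\rho(S)$ are diagonalizable. What is special here is that $a=-k/12$ or $b=-(k-4)/12$ is an integer, i.e.\ the monodromy is reducible.
\item \emph{$k\equiv 6,10\pmod{12}$.} Here $C=-2$, so $16G_{12}G_{21}=C/2+1=0$. Then $\rho(T)$, $\rho(S)$ and all $N(r,s)$ are simultaneously triangular, and the derivation of Theorem~\ref{THNNC} gives nothing, because Lemma~\ref{LEMCOMM2X2} and Proposition~\ref{MNEQNM} need $G_{12}G_{21}\neq 0$. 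Moreover, with $Q=6$ the only classes with $rs\equiv 2$ outside $\pm(1,2),\pm(2,1)$ are $(2,4)$ and $(4,2)$. Both entries are even, so Proposition~\ref{NDMRS2F} cannot lift them once $4\mid N$.
\end{itemize}

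More fundamentally, no commutator argument can exclude these weights. For $k\equiv 0,4\pmod 6$ the monodromy is, in a suitable basis, upper triangular with diagonal characters $\chi_1,\chi_2$. The ratio $\chi_1/\chi_2$ kills $S^2$ and $(TS)^3$, whose images are scalars, so it is a character of $PSL_2(\mathbb{Z})$ of order dividing $6$ and is trivial on $\Gamma(6)$. Hence for $6\mid N$ the image of $\Gamma(N)$ consists of scalars times unipotents and is abelian. For instance, at $k=0$ with basis $1$, $\int_\tau^{i\infty}\eta(s)^4\,ds$, every $\gamma\in\Gamma(6)$ acts by $\begin{pmatrix}1&\omega(\gamma)\\0&1\end{pmatrix}$.

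What actually excludes these weights is non-diagonalizability. You must show that the off-diagonal cocycle does not vanish identically on any $\Gamma(N)$. At $k=0$ this holds: if all periods of $\eta^4$ vanished on $\Gamma(6N)$, its Eichler integral would descend to a non-constant holomorphic function on the compact curve $X(6N)$. In general it is exactly the mixed mock modularity (Guerzhoy) and the Kaneko--Koike results that the paper cites.

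Your treatment of $k\equiv 5\pmod 6$ is fine. There $c\in\mathbb{Z}$ while $a,b,c-a,c-b\notin\mathbb{Z}$, which forces a logarithm at $x=0$, so $\rho(T)^N$ is a nontrivial unipotent.

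Smaller points:
\begin{itemize}
\item Theorem~\ref{KE6CDNM1} assumes the Stiller basis itself is modular. Under the Main Theorem's hypothesis you only get that $M(N-1)$ is diagonalizable, not $K_{N-1}=0$.
\item $xy-2>N$ alone does not give $x,y\neq 1$; condition 1 of Proposition~\ref{QGT6C1234} does.
\item $17\equiv 5\pmod{12}$ belongs to the list of weights where the Stiller basis is undefined, not to the ``remaining'' classes.
\end{itemize}
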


\begin{proof}
  By Theorem \ref{THKZHG}, the functions $E_4(\tau)^{k/4}y_1(\tau)$ and 
  $E_4(\tau)^{k/4}y_3(\tau)$ derived from the Stiller basis are linearly 
  independent specific solutions to the KZ equation. If the KZ equation 
  admits a fundamental system consisting of modular forms $f(\tau)$ and 
  $g(\tau)$ of weight $k$ for some $\Gamma(N)$, then there exists a matrix 
  $A \in \mathrm{GL}_2(\mathbb{C})$ such that
  \begin{equation*}
    E_4(\tau)^{-k/4}(f(\tau), g(\tau)) = (y_1(\tau), y_3(\tau))A.
  \end{equation*}
  Consequently, for any two elements $\gamma, \delta \in \Gamma(N)$ 
  ($\gamma \neq \delta$), there exist diagonal matrices 
  $\mathcal{M}_{\gamma}$ and $\mathcal{M}_{\delta}$ representing their 
  actions, allowing us to compute:
  \begin{align*}
    (y_1(\gamma\delta\tau), y_3(\gamma\delta\tau))
    &= E_4(\gamma\delta\tau)^{-k/4}(f(\gamma\delta\tau), g(\gamma\delta\tau))A^{-1}\\
    &= E_4(\tau)^{-k/4}(f(\tau), g(\tau))\mathcal{M}_{\delta}\mathcal{M}_{\gamma}A^{-1}\\
    &= (y_1(\tau), y_3(\tau))A\mathcal{M}_{\delta}\mathcal{M}_{\gamma}A^{-1}\\
    &= (y_1(\tau), y_3(\tau))A\mathcal{M}_{\gamma}\mathcal{M}_{\delta}A^{-1}
    = (y_1(\delta\gamma\tau), y_3(\delta\gamma\tau)).
  \end{align*}
  This algebraic manipulation dictates that the actions of any 
  $\gamma, \delta \in \Gamma(N)$ on the basis $(y_1(\tau), y_3(\tau))$ 
  must strictly commute.

  Suppose $k=p/q$ with $(p, q)=1$ and $q \ge 6$. By Proposition 
  \ref{QGT6C1234}, we can select integer pairs $(r, s)$ and $(u, v)$ 
  satisfying its conditions. Leveraging these pairs, Proposition 
  \ref{NDMRS2F} guarantees the existence of integers $(r', s') \equiv 
  (r, s) \pmod{Q}$ and $(u', v') \equiv (u, v) \pmod{Q}$ such that 
  $r's'-2$ and $u'v'-2$ are multiples of $6qN$. (Note that $6qN$ is a 
  multiple of $Q$). These newly constructed pairs inherently satisfy 
  conditions 1, 2, 4, and 5 of Proposition \ref{QGT6C1234}.

  We now define modular elements $\gamma=(T^{r'}ST^{s'}S)^2$ and 
  $\delta=(T^{u'}ST^{v'}S)^2$. Because $r's'-2$ and $u'v'-2$ are multiples 
  of $6qN$, Lemma \ref{TRTS2EQMEMODRSM2} asserts that 
  $\gamma, \delta \in \Gamma(6qN) \subset \Gamma(N)$. However, their 
  corresponding representation matrices $N(r', s')$ and $N(u', v')$ are 
  provably non-commutative according to Theorem \ref{THNNC}. This 
  violates the commutativity requirement derived above. Therefore, for 
  $k=p/q$ with $q \ge 6$, the KZ equation cannot possess a fundamental 
  system of modular forms for $\Gamma(N)$.

  The cases $k \equiv 1/2 \pmod{3}$ and $k \in \mathbb{Z}$ have been 
  positively resolved by Theorems 1 and 2 of Kaneko-Koike \cite{KK} and 
  Theorem 1 of Guerzhoy \cite{G}. The case $k=(6n+1)/5$ ($n \in \mathbb{Z}$) 
  is the main theorem of Kaneko \cite{K}. The remaining unresolved cases 
  to check are $k=(6n+i)/5$ for $i=2, 3, 4, 5$, as well as $k=(4n+1)/4, 
  (4n+3)/4, (3n+1)/3, (3n+2)/3, (6n+3)/2+3n$, and $(6n+5)/2$.

  Let us demonstrate the exclusion of $k=(6n+2)/5$ as a representative example. 
  By Proposition \ref{RSEQPM12}, $Q = 30/\gcd(6n+7, 6) = 30$. We can select 
  $(r, s) = (7, 26)$ and $(13, 14)$ satisfying $rs \equiv 2 \pmod{Q}$. 
  These pairs deliberately violate the identity 
  $\cos(r\theta)\cos(s\theta) = \cos((r-s)\theta)\cos(2\theta)$. 
  Applying Proposition \ref{NDMRS2F} provides lifted pairs $(r', s')$ and 
  $(u', v')$ modulo $Q$ such that $r's'-2$ and $u'v'-2$ are multiples of 
  $6qN$. One can readily verify that these specific pairs fulfill conditions 
  1, 2, 4, and 5 of Proposition \ref{QGT6C1234}. Therefore, by the exact 
  same logic employed for $q \ge 6$, we conclude that the KZ equation of 
  weight $k=(6n+2)/5$ cannot have a fundamental system of modular forms 
  for $\Gamma(N)$. The identical argument methodically eliminates all 
  other remaining cases.
\end{proof}

\begin{remark}
  As discussed in the introduction, the classification obtained
  in Theorem \ref{MAINTHEOREM} exactly recovers the necessary and sufficient
  conditions recently established by Saber and Sebbar \cite{SS2} for arbitrary
  finite-index subgroups. Our explicit construction of the connection matrices
  provides a completely independent, algebraic verification of their geometric
  result within the framework of principal congruence subgroups.
\end{remark}

\begin{remark}
  We conclude by recalling two known structural properties. When $k=6n+5$, 
  the KZ equation admits a quasimodular form of weight $k+1$ as a solution 
  (Theorem 2 of Kaneko-Koike \cite{KK}). When $k \equiv 0, 4 \pmod{6}$, 
  the fundamental system of the KZ equation is composed of a classical 
  modular form for $\mathrm{SL}_2(\mathbb{Z})$ (Theorem 1 of Kaneko-Koike 
  \cite{KK}) paired with a mixed mock modular form (Theorem 1 of Guerzhoy 
  \cite{G}).
\end{remark}

\section{Algebras Formed by Matrices $M(t)$ and $N(r, s)$ and Their 
Commutativity}\label{SSAMN}

In the previous sections, we successfully classified the weights $k$ 
for which the fundamental system of solutions to the KZ equation consists 
of modular forms for a congruence subgroup $\Gamma(N)$. We achieved this 
by exploiting the non-commutativity of the representation matrices 
$N(r, s)$ and $N(u, v)$, complementing the known results by Kaneko-Koike 
\cite{KK} (Theorems 1 and 2) and Kaneko \cite{K} (Main Theorem). 

To complete our algebraic investigation of the monodromy representation, 
we can analogously analyze the commutativity between $M(t)$ and $N(r, s)$. 
Our findings are summarized in the following theorem.

\begin{theorem}\label{THMNC}
  For the connection matrices $M(t)$ and $N(r,s)$, the commutativity 
  relation $M(t) N(r,s) = N(r,s) M(t)$ holds if and only if one of the 
  following conditions is satisfied:
  \begin{enumerate}
    \item $K_tK_{r,s} = 0$,
    \item $Z_r=Z_s=1$ (In this case, $N(1,1)=E$, which commutes with any $M(t)$),
    \item $Z_t=Z_r=Z_s$ (In this case, $N(t,t)=M(t)^2$, which trivially commutes).
  \end{enumerate}
\end{theorem}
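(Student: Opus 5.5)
We follow the same strategy as the proof of Theorem \ref{THNNC}. First dispose of the trivial case: by Theorems \ref{THEXFM} and \ref{THEXFN}, if $K_t=0$ then $M(t)$ is scalar, and if $K_{r,s}=0$ then $N(r,s)$ is scalar, so the matrices commute. Hence assume $K_tK_{r,s}\neq0$. Commutation of $M(t)$ with $N(r,s)$ is then equivalent to commutation of their non-scalar parts
\[
P=\begin{pmatrix} -\tfrac18(2Z_t+W_t) & -iG_{12}(Z_t-1)\\ -iG_{21}Z_t & \tfrac14 Z_t\end{pmatrix}
\]
and the matrix $R$ multiplying $\tfrac12K_{r,s}$ in $N(r,s)$. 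Note that $P_{21}=-iG_{21}Z_t\neq0$ always; we assume $G_{21}\neq0$, as in the generic range where the Stiller basis is defined.

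\emph{Case $W_s=0$, i.e.\ $Z_s=1$.} Here $R$ is upper triangular, as in the proof of Proposition \ref{SMNEQNM}. Since $P$ has nonzero $(2,1)$-entry, $(1,0)^T$ is not an eigenvector of $P$. The eigenvector argument therefore forces $R$ to be scalar, which requires $R_{12}=0$ and $R_{11}=R_{22}$. With $Z_s=1$ we have $R_{12}=2iG_{12}(1-Z_r)$, which forces $Z_r=1$. Hence $N(r,s)=N$ with $Z_r=Z_s=1$, and a direct substitution shows $N=E$ (the $E$ in condition 2). Conversely, this case clearly commutes.

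\emph{Case $W_s\neq0$.} Apply Lemma \ref{LEMCOMM2X2} with $A=P$ and $B=R$. This gives two equations: $P_{12}/P_{21}=R_{12}/R_{21}$ and $(P_{11}-P_{22})/P_{21}=(R_{11}-R_{22})/R_{21}$. Writing everything in terms of $Z_t$, $Z_r$, $Z_s$ and $C$, with $W_l=(Z_l-1)C$, the first equation reads
\[
\frac{G_{12}}{G_{21}}\,\frac{Z_t-1}{Z_t}=\frac{G_{12}}{G_{21}}\,\frac{2(Z_s-Z_r)+(Z_r-1)W_s}{Z_rW_s}.
\]
This is linear in $X=1/Z_r$ (and in $1/Z_t$). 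After substituting $16G_{12}G_{21}=C/2+1$, the second equation is also linear in $X$. As in Proposition \ref{MNEQNM}, we solve these as a $2\times2$ linear system, in the unknowns $X$ and $Y=1/Z_t$, with coefficients depending on $Z_s$ and $C$.

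We expect the solution set to be $X=Y=1/Z_s$, i.e.\ $Z_t=Z_r=Z_s$. We confirm that this is a solution by the group-theoretic identity $(T^tST^tS)^2$ versus $(T^tS)^6$: more precisely, by checking $B(t,t)=A(t)^2$, so that $N(t,t)=A(t)^4$ is a polynomial in $A(t)$ and hence commutes with $M(t)=A(t)^3$. Uniqueness then reduces to showing that the determinant of the system is nonzero.

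The main obstacle is this determinant computation. Unlike Proposition \ref{MNEQNM}, we expect no exceptional value such as $C=2$ here; the determinant should factor as a nonzero constant times something like $(Z_s-1)$ or $K$-type factors, all excluded in this case. If a degenerate value of $C$ does appear, we will substitute it back into the two equations. We then check directly that these equations force $Z_t=Z_r=Z_s$ anyway, or that the degenerate value forces $K_t=0$ or $K_{r,s}=0$, returning us to condition 1.
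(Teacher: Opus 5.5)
Your architecture is essentially the paper's: split off the scalar parts as in Proposition~\ref{RWCMMNCMCN}, handle $Z_s=1$ separately, and otherwise solve the conditions of Lemma~\ref{LEMCOMM2X2}. The paper instead eliminates directly between $c_{11}=0$ and $c_{21}=0$ in Proposition~\ref{CMCNEXEZ}, which is the same computation. There is a genuine gap, though, and it sits exactly at the step you flagged; your prediction there is wrong. Your two equations work out to
\[
(2Z_s-W_s)X+W_sY=2,\qquad (4Z_s-CW_s)X+CW_sY=4,
\]
and their determinant is $2Z_sW_s(C-2)$. So the exceptional value $C=2$ does occur, just as in Proposition~\ref{MNEQNM}. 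Neither of your fallbacks works there. At $C=2$ the second equation is twice the first, and the system collapses to the line $X+(Z_s-1)Y=1$, i.e.\ $Z_t(1-Z_r)=Z_r(1-Z_s)$, which has solutions violating all three conditions. For instance, take $k=0$, so that $C=2$, $Z_l=e^{il\pi/3}$, $G_{12}=1/4$, $G_{21}=1/2$, and take $t=3$, $r=2$, $s=4$. Then $M(3)=E+3n$ and $N(2,4)=E+2(1-Z_2)n$ for the same nilpotent $n=\begin{pmatrix}2&2i\\2i&-2\end{pmatrix}$, so they commute. Yet $K_3=12$, $K_{2,4}=8$, and $Z_3=-1$, $Z_2=e^{2\pi i/3}$, $Z_4=e^{4\pi i/3}$. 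So no further computation can force $Z_t=Z_r=Z_s$ or $K_tK_{r,s}=0$. The statement is simply false at $C=2$; there $\rho(T)$ and $\rho(S)$ share an eigenvector.

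The missing ingredient is the standing hypothesis $k\not\equiv 0,4\pmod{12}$, under which the Stiller basis is a fundamental system. It is equivalent to $C\neq 2$, and the paper uses exactly this to kill the factor $(C-2)(Z_t-Z_r)$. Once you invoke it, your determinant is nonzero whenever $W_s\neq 0$, the unique solution is $X=Y=1/Z_s$, and your argument closes, giving $Z_t=Z_r=Z_s$ in one step.

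You also need $G_{12}\neq 0$, not just $G_{21}\neq 0$: you use it to get $Z_r=1$ from $R_{12}=0$ when $Z_s=1$, and to cancel $G_{12}/G_{21}$ in the first equation. Since $16G_{12}G_{21}=C/2+1$, this amounts to $C\neq -2$, i.e.\ $k\not\equiv 6,10\pmod{12}$, which is not among the paper's stated exclusions. The statement fails there in the same way. For example, at $k=-2$ with $t=3$, $r=2$, $s=4$ we have $G_{12}=0$, and both matrices are $\pm E$ plus a multiple of the same strictly lower triangular matrix, with $K_3K_{2,4}\neq 0$. The paper's proof tacitly assumes this as well, since Proposition~\ref{CMCNC} divides by $G_{12}G_{21}$.
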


To prove this theorem, our first step is to explicitly compute the 
commutator of $M(t)$ and $N(r, s)$. We begin by isolating the non-scalar 
parts of these matrices.

\begin{proposition}\label{RWCMMNCMCN}
  Let us define the matrices $\tilde{M}_t$ and $\tilde{N}_{r, s}$ via the 
  relations:
  \begin{align*}
    M(t) &= -\frac{1}{2}Z_tW_tE + K_t\tilde{M}_t,\\
    N(r, s) &= -Z_rZ_sE + K_{r,s}\tilde{N}_{r, s}.
  \end{align*}
  Then, the necessary and sufficient condition for $M(t)N(r, s) = 
  N(r, s)M(t)$ is that at least one of the following holds:
  \begin{enumerate}
    \item $K_t=0$,
    \item $K_{r,s}=0$,
    \item $[\tilde{M}_t,\tilde{N}_{r, s}]=0$.
  \end{enumerate}
\end{proposition}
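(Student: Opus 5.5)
The plan is to use the fact that scalar matrices are central, so they drop out of commutators. By Theorem \ref{THEXFM} and Theorem \ref{THEXFN}, $M(t)$ and $N(r,s)$ can be written as a scalar multiple of $E$ plus a multiple of a fixed matrix. I will take
\begin{equation*}
\tilde{M}_t=\begin{pmatrix} -\frac{1}{8}(2Z_t+W_t) & -iG_{12}(Z_t-1)\\ -iG_{21}Z_t & \frac14 Z_t\end{pmatrix},
\end{equation*}
and let $\tilde{N}_{r,s}$ be one half of the matrix appearing in Theorem \ref{THEXFN}. This choice makes the relation $N(r,s)=-Z_rZ_sE+K_{r,s}\tilde{N}_{r,s}$ agree with the normalization in the statement, so both defining relations hold identically and the matrices $\tilde{M}_t,\tilde{N}_{r,s}$ are well defined.

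Since $-\frac12 Z_tW_tE$ and $-Z_rZ_sE$ commute with every $2\times 2$ matrix, expanding the product gives
\begin{equation*}
M(t)N(r,s)-N(r,s)M(t)=K_tK_{r,s}\,[\tilde{M}_t,\tilde{N}_{r,s}].
\end{equation*}
All cross terms involving a scalar matrix cancel. This identity holds for all parameters, with no nondegeneracy assumption needed.

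The equivalence then follows because $\mathbb{C}$ has no zero divisors. The product $K_tK_{r,s}[\tilde{M}_t,\tilde{N}_{r,s}]$ is the zero matrix if and only if $K_t=0$, or $K_{r,s}=0$, or the commutator vanishes. Indeed, if $K_tK_{r,s}\neq 0$, we can divide by this nonzero scalar to get $[\tilde{M}_t,\tilde{N}_{r,s}]=0$. Conversely, each of the three conditions makes the right-hand side vanish. This gives both directions of the proposition.

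No step is a genuine obstacle. The only care needed is bookkeeping: the factor $\frac12$ in Theorem \ref{THEXFN} must be absorbed into $\tilde{N}_{r,s}$ so that the relation in the statement is exact.
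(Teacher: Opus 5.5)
Your proof is correct and follows the paper's argument: the scalar parts $-\frac{1}{2}Z_tW_tE$ and $-Z_rZ_sE$ drop out of the commutator, so $[M(t),N(r,s)]=K_tK_{r,s}[\tilde{M}_t,\tilde{N}_{r,s}]$, and the three conditions follow at once. You also fix $\tilde{M}_t$ explicitly and take $\tilde{N}_{r,s}$ to be one half of the matrix in Theorem~\ref{THEXFN}; this keeps both matrices well defined even when $K_t$ or $K_{r,s}$ vanishes, and it agrees with the normalization the paper uses later in Proposition~\ref{CMCNC}.
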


\begin{proof}
  The matrices $M(t)$ and $N(r, s)$ commute if and only if their commutator 
  vanishes, i.e., $[M(t), N(r, s)] = 0$. Since the scalar matrices 
  $-\frac{1}{2}Z_tW_tE$ and $-Z_rZ_sE$ commute with any matrix, the 
  commutator reduces to:
  \begin{equation*}
    [M(t),N(r,s)] = K_tK_{r,s}[\tilde{M}_t,\tilde{N}_{r, s}].
  \end{equation*}
  This factorization immediately yields the stated conditions.
\end{proof}

Having reduced the problem to the commutator of the non-scalar parts, we 
now explicitly write down the entries of this commutator matrix.

\begin{proposition}\label{CMCNC}
  Let $c_{ij}$ denote the $(i, j)$-entry of the commutator matrix 
  $[\tilde{M}_t,\tilde{N}_{r, s}]$. These entries are explicitly given by:
  \begin{align*}
    \frac{2c_{11}}{G_{12}G_{21}} &= -\frac{2c_{22}}{G_{12}G_{21}} 
    = 2Z_t(Z_r - Z_s) + W_s(Z_t - Z_r),\\
    \frac{16c_{12}}{iG_{12}} &= 4(1+Z_t)(Z_r - Z_s) - 4W_s(Z_r - Z_t)\\
      &\qquad\qquad + 2W_t(Z_r - Z_s) + W_sW_t(1 - Z_r) - W_rW_s(1 - Z_t),\\
    \frac{16c_{21}}{iG_{21}} &= 4Z_t(Z_r - Z_s) - Z_tW_rW_s + Z_rW_sW_t.
  \end{align*}
\end{proposition}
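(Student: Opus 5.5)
The proof is a direct computation, organized so that the symmetric identities for the $W_l$ can be applied at the end. I would first write the non-scalar parts explicitly from Theorems \ref{THEXFM} and \ref{THEXFN}:
\begin{equation*}
  \tilde{M}_t=\begin{pmatrix} -\frac{1}{8}(2Z_t+W_t) & -iG_{12}(Z_t-1)\\ -iG_{21}Z_t & \frac{1}{4}Z_t\end{pmatrix},
\end{equation*}
and $\tilde{N}_{r,s}$ equal to one half of the $2\times 2$ matrix appearing in Theorem \ref{THEXFN}. Writing $\tilde M_t=(p_{ij})$ and $\tilde N_{r,s}=(q_{ij})$, I would use the general commutator formulas for $2\times 2$ matrices:
\begin{equation*}
  c_{11}=-c_{22}=p_{12}q_{21}-q_{12}p_{21},\quad
  c_{12}=q_{12}(p_{11}-p_{22})-p_{12}(q_{11}-q_{22}),\quad
  c_{21}=p_{21}(q_{11}-q_{22})-q_{21}(p_{11}-p_{22}).
\end{equation*}
The relation $c_{22}=-c_{11}$ is then automatic, because a commutator has trace zero.

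Next I would compute the diagonal differences once, since they enter both $c_{12}$ and $c_{21}$. One has $p_{11}-p_{22}=-\frac{1}{8}(4Z_t+W_t)$. For $q_{11}-q_{22}$ one gets $\frac{1}{2}\big[\frac{1}{2}+\frac{1}{8}(2(Z_s-1)+W_s)(2+W_r)-\frac{1}{4}Z_r(2-W_s)\big]$. The products $p_{12}q_{21}$, $q_{12}p_{21}$, and so on, carry prefactors $G_{12}G_{21}$, $iG_{12}$ and $iG_{21}$ respectively. These match the normalizations on the left-hand sides in the statement, so no use of the reflection identity $16G_{12}G_{21}=\frac{1}{2}C+1$ should be needed for $c_{12}$ and $c_{21}$.

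To reach the stated forms, I would rewrite the resulting polynomials in $Z_t,Z_r,Z_s,W_t,W_r,W_s$ using the symmetric identity $(Z_a-1)W_b=(Z_b-1)W_a$, which holds for every pair $a,b\in\{t,r,s\}$ because $W_l=(Z_l-1)C$. For example, in $c_{11}$ the raw expansion is
\begin{equation*}
  \tfrac{1}{2}G_{12}G_{21}\big[2Z_t(Z_s-Z_r)+W_s(2Z_tZ_r-Z_r-Z_t)\big].
\end{equation*}
Replacing $W_s(Z_t-1)$ by $W_t(Z_s-1)$ where needed should convert this into the stated combination $2Z_t(Z_r-Z_s)+W_s(Z_t-Z_r)$. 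I would double-check the overall sign and the factor of $2$ against the normalization of $\tilde N_{r,s}$. The entry $c_{21}$ is handled the same way, and it is short: the terms without $W$ collapse to $4Z_t(Z_r-Z_s)$, and the quadratic $W$ terms reduce to $-Z_tW_rW_s+Z_rW_sW_t$.

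The main obstacle is $c_{12}$. It collects cross terms such as $W_rW_s$, $W_sW_t$ and products with $(2+W_r)$ from $q_{11}$, and terms like $W_r(Z_s-1)$ can be written in two equivalent ways. My approach there is to fix a normal form first: eliminate every product $W_r(Z_s-1)$ and $W_t(Z_s-1)$ in favour of $W_s(Z_r-1)$ and $W_s(Z_t-1)$, keeping only the monomials $W_sW_t$, $W_rW_s$ and $W_t$ as in the target. I then expand both the computed $16c_{12}/(iG_{12})$ and the claimed right-hand side in this normal form and compare coefficients. If a residual mismatch remains, the natural fallback is to substitute $W_l=(Z_l-1)C$ throughout and verify the resulting identity as a polynomial in $C$ and the $Z$'s.
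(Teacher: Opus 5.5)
Your route is the paper's: evaluate $[\tilde{M}_t,\tilde{N}_{r,s}]$ entrywise through the diagonal differences and the identity $(Z_a-1)W_b=(Z_b-1)W_a$. Your general formulas for $c_{11}$, $c_{12}$, $c_{21}$ are correct, as are your diagonal differences and your remark that $16G_{12}G_{21}=\frac{1}{2}C+1$ is never needed. The problem is the one computation you actually display. Your ``raw expansion'' $\frac{1}{2}G_{12}G_{21}\bigl[2Z_t(Z_s-Z_r)+W_s(2Z_tZ_r-Z_r-Z_t)\bigr]$ is $p_{12}q_{21}+q_{12}p_{21}$, not $p_{12}q_{21}-q_{12}p_{21}$.

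The repair you propose for it cannot work. Its bracket differs from the stated one by $2Z_t\bigl(2(Z_s-Z_r)+(Z_r-1)W_s\bigr)$, which is not identically zero after $W_s=(Z_s-1)C$. Rewriting with the symmetric identity never changes the value of an expression. Replacing $W_s(Z_t-1)$ by $W_t(Z_s-1)$ would also only introduce a $W_t$ that the target does not contain. No overall sign or factor of $2$ reconciles the two either. With the correct sign, $p_{12}q_{21}=\frac{1}{2}G_{12}G_{21}(Z_t-1)Z_rW_s$ and $q_{12}p_{21}=\frac{1}{2}G_{12}G_{21}Z_t\bigl(2(Z_s-Z_r)+(Z_r-1)W_s\bigr)$. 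The $Z_tZ_rW_s$ terms cancel, and $2Z_t(Z_r-Z_s)+W_s(Z_t-Z_r)$ appears immediately, with no identity used at all.

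The normal-form bookkeeping you anticipate for $c_{12}$ is likewise unnecessary. As in the paper, the symmetric identity is used exactly once: it turns $(Z_s-1)W_r$ into $(Z_r-1)W_s$ inside $q_{11}-q_{22}$, giving $q_{11}-q_{22}=\frac{1}{16}\bigl(-4(Z_r-Z_s)+4Z_rW_s+W_rW_s\bigr)$. Then
\[ \frac{16c_{12}}{iG_{12}}=-(4Z_t+W_t)\bigl(2(Z_s-Z_r)+(Z_r-1)W_s\bigr)+(Z_t-1)\bigl(-4(Z_r-Z_s)+4Z_rW_s+W_rW_s\bigr), \]
which expands verbatim to the five stated terms. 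The same simplified difference gives $c_{21}$ in one line, again after the $Z_tZ_rW_s$ terms cancel. So your method is sound and matches the paper's, but the $c_{11}$ sample must be redone. Applying your own check of substituting $W_l=(Z_l-1)C$ to it would have exposed the sign slip.
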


\begin{proof}
  Since the computations for all entries follow a similar algebraic 
  procedure, we demonstrate the calculation solely for the entry $c_{21}$.

  Let $\tilde{M}_{ij}$ and $\tilde{N}_{ij}$ denote the $(i, j)$-entries 
  of $\tilde{M}_t$ and $\tilde{N}_{r, s}$, respectively. By definition, 
  we have:
  \begin{align*}
    c_{21} &= (\tilde{M}\tilde{N})_{21} - (\tilde{N}\tilde{M})_{21}\\
    &= (\tilde{M}_{21}\tilde{N}_{11} + \tilde{M}_{22}\tilde{N}_{21})
      - (\tilde{N}_{21}\tilde{M}_{11} + \tilde{N}_{22}\tilde{M}_{21})\\
    &= \tilde{M}_{21}(\tilde{N}_{11} - \tilde{N}_{22})
      + \tilde{N}_{21}(\tilde{M}_{22} - \tilde{M}_{11}).
  \end{align*}
  From the explicit formulas established in Theorem \ref{THEXFM} and 
  Theorem \ref{THEXFN}, we compute the differences of the diagonal elements:
  \begin{align*}
    \tilde{M}_{11} - \tilde{M}_{22} &= -\frac{1}{2}Z_t - \frac{1}{8}W_t,\\
    \tilde{N}_{22} - \tilde{N}_{11} &= \frac{1}{4}(Z_r - Z_s) 
    - \frac{1}{8} Z_r W_s - \frac{1}{8}(Z_s - 1)W_r - \frac{1}{8} W_s 
    - \frac{1}{16} W_r W_s.
  \end{align*}
  Applying the symmetric identity $(Z_s-1)W_r = (Z_s-1)(Z_r-1)C = (Z_r-1)W_s$, 
  the second difference simplifies to:
  \begin{align*}
    \tilde{N}_{22} - \tilde{N}_{11} &= \frac{1}{4}(Z_r - Z_s) 
    - \frac{1}{8} Z_r W_s - \frac{1}{8}(Z_r - 1)W_s - \frac{1}{8} W_s 
    - \frac{1}{16} W_r W_s\\
    &= \frac{1}{4}(Z_r - Z_s) - \frac{1}{4} Z_r W_s - \frac{1}{16} W_r W_s.
  \end{align*}
  Substituting these differences back into the expression for $c_{21}$ 
  yields the desired formula.
\end{proof}

By analyzing the conditions under which these explicit entries simultaneously 
vanish, we can completely determine when the non-scalar commutator vanishes.

\begin{proposition}\label{CMCNEXEZ}
  Recalling that the variables $Z_l = e^{2i l \theta} 
  = e^{i \frac{l(1+k)}{6} 2\pi}$ (for $l = t, s, r$) are roots of unity, 
  the necessary and sufficient condition for the commutator 
  $[\tilde{M}_t,\tilde{N}_{r, s}]=0$ to vanish is that either $Z_r=Z_s=1$ 
  or $Z_t=Z_r=Z_s$ holds.
\end{proposition}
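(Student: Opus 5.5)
The plan is to verify sufficiency by direct substitution into the formulas of Proposition \ref{CMCNC}. For necessity, I would use only the two entries $c_{11}$ and $c_{21}$, which turn out to give a $2\times 2$ linear system with a very simple structure. Throughout I substitute $W_l=(Z_l-1)C$ for $l=t,r,s$. I also use that $G_{12}G_{21}\neq 0$ in the range $k\not\equiv 0,4,5,11 \pmod{12}$ of Proposition \ref{HGDF33}: the arguments of the numerator Gamma factors are never non-positive integers there, and the product evaluates to $(\tfrac12 C+1)/16$ by the proof of Theorem \ref{THEXFM}. The case $C=-2$ would make this product vanish, so I would check separately whether it can occur.

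\emph{Sufficiency.} If $Z_r=Z_s=1$, then $W_r=W_s=0$, and every term in the three formulas of Proposition \ref{CMCNC} contains a factor $Z_r-Z_s$, $W_s$ or $W_r$, so all entries vanish. If $Z_t=Z_r=Z_s=:Z$, then $W_t=W_r=W_s=:W$. In this case $c_{11}=c_{22}=0$ immediately. For the other two entries:
\begin{equation*}
16c_{21}/(iG_{21})=-ZW^2+ZW^2=0, \qquad 16c_{12}/(iG_{12})=W^2(1-Z)-W^2(1-Z)=0 .
\end{equation*}

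\emph{Necessity.} Set $P=Z_t(Z_r-Z_s)$ and $R=(Z_s-1)(Z_t-Z_r)$. After substitution, $c_{11}=0$ becomes
\begin{equation*}
2P+CR=0 .
\end{equation*}
For $c_{21}$, the key computation is
\begin{equation*}
-Z_tW_rW_s+Z_rW_sW_t=C^2(Z_s-1)\bigl(-Z_t(Z_r-1)+Z_r(Z_t-1)\bigr)=C^2R,
\end{equation*}
so $c_{21}=0$ becomes
\begin{equation*}
4P+C^2R=0 .
\end{equation*}
This linear system in $(P,R)$ has determinant $2C^2-4C=2C(C-2)$. Since $C=\csc\theta\neq0$, whenever $C\neq 2$ we get $P=R=0$. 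Because $Z_t\neq0$, $P=0$ gives $Z_r=Z_s$. Then $R=0$ gives either $Z_s=1$, hence $Z_r=Z_s=1$, or $Z_t=Z_r$, hence $Z_t=Z_r=Z_s$. These are exactly the two stated alternatives.

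\emph{The degenerate case.} The main obstacle is $C=\pm2$, where either the system above collapses to the single relation $P+R=0$ (for $C=2$) or the factor $G_{12}G_{21}$ vanishes (for $C=-2$). My first move is to show these cases cannot occur. Write $k=12m+j$. Then $C=\pm 2$ means $\sin\theta=\pm\tfrac12$ with $\theta=(1+k)\pi/6$, so $(1+k)/6\in\{\pm\tfrac16,\pm\tfrac56\}+2\mathbb Z$. This forces $k\equiv 0,4 \pmod{12}$ when $C=2$, and $k\equiv 6,10 \pmod{12}$ when $C=-2$. The first possibility is excluded by the standing hypothesis of Proposition \ref{HGDF33}, under which the Stiller basis, and hence $\rho$, $M(t)$ and $N(r,s)$, are defined. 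The second possibility is not covered by that hypothesis, so it needs its own treatment.

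If the case $C=2$ (or $C=-2$) had to be treated anyway, my fallback would use the explicit roots of unity. With $C=2$, $Z_l=e^{\pm i\pi l/3}$ is a sixth root of unity depending only on $l \bmod 6$. I would then combine $P+R=0$ with $c_{12}=0$ and check the finitely many triples $(t,r,s) \bmod 6$ directly. For $C=-2$, I would argue from $c_{12}=c_{21}=0$ rather than from $c_{11}$, and similarly reduce to a finite check over residues mod $6$. I expect this bookkeeping to confirm the same two alternatives, but I have not verified it.
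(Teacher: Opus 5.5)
Outside the case $C=-2$, your argument is correct and is the paper's argument. Both use only $c_{11}$ and $c_{21}$. Your $(P,R)$ system with determinant $2C(C-2)$ is the same elimination that yields $(C-2)(Z_t-Z_r)W_s=0$ in the paper, with $C=2$ excluded by $k\not\equiv 0,4 \pmod{12}$. Your sufficiency check is also fine.

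The genuine gap is the case $C=-2$, i.e.\ $k\equiv 6,10\pmod{12}$. You rightly isolate it. The paper's proof passes over it silently, since going from $c_{11}=0$ and $c_{21}=0$ to \eqref{MNCC1} and \eqref{MNCC2} divides by $G_{12}G_{21}$ and $G_{21}$. But your expectation that a residue check will confirm the same two alternatives is false. The statement itself fails for these $k$, so this step cannot be completed.

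Here is why. When $C=-2$ we have $16G_{12}G_{21}=\tfrac12C+1=0$, and exactly one factor vanishes. At $k=6$ it is $G_{21}$, because of $\Gamma\bigl(\tfrac{6-k}{12}\bigr)=\Gamma(0)$. Then $c_{11}$ and $c_{21}$ vanish identically, so your plan of using $c_{12}=c_{21}=0$ leaves only one scalar equation. With $W_l=2(1-Z_l)$, the formula for $c_{12}$ collapses to
\[
\frac{16c_{12}}{iG_{12}}=8\bigl(Z_t(1-Z_s)-Z_s(1-Z_r)\bigr).
\]
Here $Z_l=\omega^l$ with $\omega=e^{i\pi/3}$. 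Take $(t,r,s)=(3,5,7)$, so $Z_t=-1$, $Z_r=\bar\omega$, $Z_s=\omega$. The bracket is $-(1-\omega)-(\omega-1)=0$.

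Explicitly, $\tilde M_3=\begin{pmatrix}-\frac14&2iG_{12}\\0&-\frac14\end{pmatrix}$ and $\tilde N_{5,7}=\begin{pmatrix}\frac14&2iG_{12}(\omega-1)\\0&\frac14\end{pmatrix}$. Each is a scalar plus a strictly upper triangular matrix, so they commute, even though $Z_r\neq 1$ and $Z_t\neq Z_r$. Since $K_3=12$ and $K_{5,7}=8$ are nonzero, this also contradicts Theorem \ref{THMNC}. The same triple works at $k=-2$, where $G_{12}=0$ and everything is lower triangular.

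The proposition therefore needs the extra hypothesis $k\not\equiv 6,10\pmod{12}$, equivalently $G_{12}G_{21}\neq 0$. Under that hypothesis your proof is complete.
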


\begin{proof}
  From Proposition \ref{CMCNC}, if $[\tilde{M}_t,\tilde{N}_{r, s}]=0$, then 
  we must have $c_{11}=c_{22}=0$ and $c_{21}=0$. This gives us the following 
  system of equations:
  \begin{align}
    & 2Z_t(Z_r - Z_s) + W_s(Z_t - Z_r) = 0,\label{MNCC1}\\
    & 4Z_t(Z_r - Z_s) - Z_tW_rW_s + Z_rW_sW_t = 0.\label{MNCC2}
  \end{align}
  Let us assume, for the sake of contradiction, that $Z_t \neq Z_r$ and 
  $Z_s \neq 1$ (which implies $W_s \neq 0$). 
  From equation \eqref{MNCC1}, we isolate $4Z_t(Z_r - Z_s) = -2W_s(Z_t - Z_r)$. 
  Substituting this into equation \eqref{MNCC2} yields:
  \begin{equation*}
    -2W_s(Z_t - Z_r) - Z_tW_rW_s + Z_rW_sW_t = 0.
  \end{equation*}
  Dividing the entire equation by the non-zero quantity $W_s$, we obtain 
  $-2(Z_t - Z_r) - Z_tW_r + Z_rW_t = 0$. 
  Next, we substitute the definitions $W_r = (Z_r-1)C$ and $W_t = (Z_t-1)C$ 
  into this equation:
  \begin{align*}
    0 &= -2(Z_t - Z_r) - Z_t(Z_r - 1)C + Z_r(Z_t - 1)C \\
    &= (C - 2)(Z_t - Z_r).
  \end{align*}
  Since we assumed $Z_t \neq Z_r$, this forces $C=2$. However, under the 
  condition $k \not\equiv 0, 4 \pmod{12}$, the constant 
  $C = \csc((1+k)\pi/6)$ cannot be equal to 2, which presents a contradiction. 
  Therefore, our initial assumption must be false, meaning we must have 
  either $Z_s=1$ or $Z_t=Z_r$.

  Case 1: If $Z_s=1$, then $W_s=0$. Substituting this into equation 
  \eqref{MNCC1} yields $2Z_t(Z_r - 1) = 0$. Since $|Z_t|=1$, this forces 
  $Z_r=1$. Thus, $Z_r=Z_s=1$. In this case, it is straightforward to verify 
  that all entries in Proposition \ref{CMCNC} trivially vanish. Moreover, 
  under this condition, $\tilde{N}_{1,1}$ reduces to $\frac{1}{4}E$, which 
  evidently commutes with any $Z_t$.

  Case 2: If $Z_t=Z_r$, substituting this into equation \eqref{MNCC1} 
  yields $2Z_r(Z_r - Z_s) = 0$. Since $Z_r \neq 0$, this forces $Z_r=Z_s$. 
  Thus, $Z_t=Z_r=Z_s$. In this scenario as well, all entries trivially 
  cancel out to 0. In fact, when $Z_t=Z_r=Z_s$, the matrices $M(t)$ and 
  $N(t,t)$ are both powers of the exact same matrix $\rho(T^t S)$, 
  guaranteeing their commutativity.
\end{proof}

With these conditions rigorously established, the proof of our main theorem 
for this section is now complete.

\begin{proof}[Proof of Theorem \ref{THMNC}]
  The theorem is a direct consequence of synthesizing the explicit formulas 
  from Theorems \ref{THEXFM} and \ref{THEXFN}, the commutator factorization 
  in Proposition \ref{RWCMMNCMCN}, and the zero-commutator conditions 
  derived in Proposition \ref{CMCNEXEZ}.
\end{proof}

\section{Concluding Remarks}

In the process of classifying the modular bases, we completely determined the
commutativity of the matrices $M(t)$ and $N(r,s)$ generated by the monodromy
representations $\rho(T)$ and $\rho(S)$. It is worth noting that these monodromy 
matrices satisfy quadratic relations and the braid relation, which inherently 
equips the solution space with the structure of the Iwahori-Hecke 
algebra (or Temperley-Lieb algebra) \cite{GN, MMS}.

From an algebraic perspective, finding a family of commuting matrices with spectral
parameters, such as $N(r,s)N(u,v) = N(u,v)N(r,s)$ as classified in Theorem \ref{THNNC}
and $M(t)N(r,s) = N(r,s)M(t)$ as classified in Theorem \ref{THMNC}, 
is perfectly analogous to constructing a family of commuting transfer matrices 
in quantum integrable systems (e.g., the XXZ model associated with the quantum
group $U_q(\mathfrak{sl}_2)$).

While the primary goal of this paper is the number-theoretic classification
of $k$ for the Kaneko-Zagier equation \cite{KZ, KK}, the exact commutativity conditions
we derived might provide explicit realizations of such commuting operators. 
Exploring the direct connection between these modularity conditions 
and the Yang-Baxter equation would be a fascinating subject for future study.

\bibliographystyle{plain}
\bibliography{kzsda}
\nocite{*}

\end{document}